\definecolor{forestgreen}{rgb}{0.13, 0.55, 0.13}
\definecolor{amber}{rgb}{1.0, 0.75, 0.0}
\definecolor{bananayellow}{rgb}{.8, 0.6, 0}
\definecolor{uqp}{RGB}{152,24,147}
\definecolor{oxb}{RGB}{0,33,71}
\newcommand{\real}{\mathbb{R}}
\newcommand{\reals}{\mathbb{R}}
\newcommand{\N}{\mathbb{N}}
\newcommand{\R}{\real}
\newcommand{\T}{\intercal}
\newcommand{\refleq}[1]{\stackrel{\mathclap{\mathrm{\cref{#1}}}}{\leq}}
\newcommand{\refl}[1]{\stackrel{\mathclap{\mathrm{\cref{#1}}}}{<}}
\let\brack\undefined
\DeclarePairedDelimiter{\floor}{\lfloor}{\rfloor}
\DeclarePairedDelimiter{\abs}{\lvert}{\rvert}
\DeclarePairedDelimiter{\norm}{\lVert}{\rVert}
\DeclarePairedDelimiter{\paren}{\lparen}{\rparen}
\DeclarePairedDelimiter{\brack}{\lbrack}{\rbrack}
\DeclarePairedDelimiter{\dotprod}{\langle}{\rangle}
\DeclarePairedDelimiterXPP{\bigO}[1]{\mathcal{O}}{\lparen}{\rparen}{}{#1}
\DeclarePairedDelimiterXPP{\bigOmega}[1]{\Omega}{\lparen}{\rparen}{}{#1}
\newcommand{\vek}[1]{\bm{#1}}
\newcommand{\mat}[1]{\bm{#1}}
\newcommand{\tns}[1]{\textsf{\textbf{#1}}}
\renewcommand{\aa}{\mathbf{a}}
\newcommand{\bb}{\mathbf{b}}
\renewcommand{\gg}{\mathbf{g}}
\newcommand{\HH}{\mathbf{H}}
\newcommand{\eye}{\mathbf{I}}
\renewcommand{\ss}{\mathbf{s}}
\newcommand{\UU}{\mathbf{U}}
\newcommand{\uu}{\mathbf{u}}
\newcommand{\VV}{\mathbf{V}}
\newcommand{\vv}{\mathbf{v}}
\newcommand{\ww}{\mathbf{w}}
\newcommand{\xx}{\mathbf{x}}
\newcommand{\yy}{\mathbf{y}}
\newcommand{\deltapred}{\delta^{\operatorname{pred}}}
\newcommand{\deltaact}{\delta^{\operatorname{act}}}
\newcommand{\eps}{\varepsilon}
\renewenvironment{procedure}{\algorithm}{\endalgorithm}
\newmdtheoremenv[
    backgroundcolor = green!3,
]{theorem}{Theorem}[section]
\newaliascnt{corollary}{theorem}
\newmdtheoremenv[
    backgroundcolor = green!3,
]{corollary}[corollary]{Corollary}
\newaliascnt{lemma}{theorem}
\newmdtheoremenv[
    backgroundcolor = green!3,
]{lemma}[lemma]{Lemma}
\newaliascnt{proposition}{theorem}
\newmdtheoremenv[
    backgroundcolor = green!3,
]{proposition}[proposition]{Proposition}
\newaliascnt{definition}{theorem}
\newmdtheoremenv[
    backgroundcolor = blue!3,
]{definition}[definition]{Definition}
\newaliascnt{condition}{theorem}
\newmdtheoremenv[
    backgroundcolor = yellow!3,
]{condition}[condition]{Condition}
\newaliascnt{assumption}{theorem}
\newmdtheoremenv[
    backgroundcolor = yellow!3,
]{assumption}[theorem]{Assumption}
\theoremstyle{definition}
\newaliascnt{example}{theorem}
\newmdtheoremenv[
    backgroundcolor = cyan!3,
]{example}[example]{Example}
\theoremstyle{definition}
\newaliascnt{remark}{theorem}
\newmdtheoremenv[
    backgroundcolor = red!3,
]{remark}[remark]{Remark}
\crefname{equation}{}{}
\crefname{figure}{Figure}{Figures}
\crefname{assumption}{Assumption}{Assumptions}
\crefname{condition}{Condition}{Conditions}
\crefname{procedure}{Procedure}{Procedures}
\newcommand{\ignore}[1]{}
\title{Efficient Implementation of Third-Order Tensor Methods with Adaptive Regularization for Unconstrained Optimization}
\author[1]{Coralia Cartis}
\author[1]{Raphael Hauser}
\author[1]{Yang Liu}
\author[1]{Karl Welzel}
\author[1]{Wenqi Zhu}
\affil[1]{Mathematical Institute, University of Oxford\footnote{The order of the authors is alphabetical; Yang Liu and Karl Welzel are the primary contributors.}\thanks{This work was supported by the Hong Kong Innovation and Technology Commission (InnoHK Project CIMDA).}}
\affil[ ]{\scriptsize{\textit{\{coralia.cartis, raphael.hauser, yang.liu, karl.welzel, wenqi.zhu\}@maths.ox.ac.uk}}}
\date{\today}
\begin{document}

\maketitle

\begin{abstract}
	High-order tensor methods that employ local Taylor models of degree $p$ within adaptive regularization frameworks (AR$p$) have recently received significant attention, due to their improved/optimal global and local rates of convergence, for both convex and nonconvex optimization problems.
    The numerical performance of tensor methods for general unconstrained optimization problems remains insufficiently explored/understood, which we address in this paper, by showcasing the numerical performance of standard second- and third-order variants ($p=2,3$) and proposing novel techniques for key algorithmic aspects when $p\geq 3$, to improve the numerical efficiency of tensor variants.
    In particular, to improve the adaptive choice of the regularization parameter, we extend the interpolation-based updating strategy introduced in [Gould, Porcelli and Toint, \textit{Comput Optim Appl} (2012) 53:1--22] for $p=2$, to the case when $p \geq 3$.
    We identify fundamental differences between the different local minima of the regularised subproblems for $p=2$ and $p \geq 3$ and their effect on algorithm performance.
    Then, when $p\geq 3$, we introduce a novel pre-rejection technique that rejects poor/unsuccessful subproblem minimizers (that we refer to as `transient') prior to any function evaluation, thereby reducing cost and selecting useful (`persistent') ones.
    Numerical studies showcase the efficiency improvements generated by our proposed modifications of the AR$3$ algorithm.
    We also assess numerically, the effect of different subproblem termination conditions and choice of initial regularization parameter on the overall algorithm performance.
    Finally, we benchmark our best-performing AR$3$ variants, as well as those in [Birgin et al., \textit{Optim Lett} (2020) 14:815--838], against second-order ones (AR$2$).
    Encouraging results on standard test problems are obtained, confirming that AR$3$ variants can be made to outperform second-order variants in terms of objective evaluations, derivative evaluations, and number of subproblem solves.
    We provide an efficient, extensive and modular software package in MATLAB that includes many AR$2$ and AR$3$ variants, including Hessian- and tensor-free ones, allowing ease of use and experimentation for interested users.
\end{abstract}

\section{Introduction}

In this paper, we consider the unconstrained nonconvex optimization problem,
\begin{equation*}
	\min_{\xx \in \real^d} f(\xx)
\end{equation*}
where $f \colon \real^d \rightarrow \real $ is $p$-times continuously differentiable ($p \ge 1$) and bounded below.
%and the $p$th derivative of $f$ is globally Lipschitz continuous.
Recently, adaptive $p$th-order regularization (AR$p$) methods have been proposed \cite{birgin2017worst,cartis2022evaluation} for such objectives, that can naturally incorporate higher derivatives  into their construction.
In these methods, a local model $m_k(\ss)$ is constructed to approximate $f(\xx + \ss)$ based on a regularized $p$th-order Taylor expansion of $f$ around the current iterate $x_k$,
\begin{equation}
	\label{eqn:Taylor}
	t_k (\ss) = t^p_{\xx_k}(\ss) = f(\xx_k) + \sum_{j=1}^p \frac{1}{j!} \nabla^j f(\xx_k)[\ss]^j,
\end{equation}
where $k$ is the iteration counter, $\nabla^j f(\xx_k) \in \real^{\otimes^j d}$ is a $j$th-order tensor, $\real^{\otimes^j d}$ denotes the $j$-fold tensor product of $\real^d$ and $\nabla^j f(\xx_k)[\ss]^j$ is the $j$th derivative of $f$ at $\xx_k$ along direction $\ss \in \real^d$.\footnote{This notation is equivalent to the multi-index notation for the multidimensional Taylor expansion $t_{\xx_k}^p(\ss) = \sum_{\abs{\alpha} \leq p} \frac{1}{\alpha!} D^{\alpha} f(\xx_k) \ss^{\alpha}$ since $\frac{1}{j!} \nabla^j f(\xx_k)[\ss]^j = \sum_{\abs{\alpha} = j} \frac{1}{\alpha!} D^{\alpha} f(\xx_k) \ss^{\alpha}$.}
To ensure that the local model $m_k$ is bounded below, a $(p+1)$st-order regularization term scaled by $\sigma_k > 0$ is added to $t_k$. The model $m_k$ is given by\footnote{Unless otherwise stated, $\|\cdot\|$ denotes the Euclidean norm in this paper.}
\begin{equation}
	m_k(\ss) = m^p_{\xx_k, \sigma_k}(\ss) = t_k (\ss)+ \frac{\sigma_k}{p+1}\|\ss\|^{p+1}.
	\label{eqn:subproblem}
\end{equation}
The iterate $\xx_k$ is then updated by approximately minimizing this model to find a step \(\ss_k\), and setting $\xx_{k+1} = \xx_k + \ss_k$, provided that sufficient decrease in the objective function is achieved; the regularization parameter $\sigma_k$ is adjusted adaptively to ensure progress towards optimality over the iterations.
This process continues until an approximate local minimizer of $f$ is found.

%Inside the $p$th-order adaptive regularization framework (AR$p$) %\cite{birgin2017worst,cartis2020concise,cartis2020sharp}, the regularization parameter $\sigma_k$ is adjusted adaptively to ensure progress towards optimality over the iterations.
%\footnote{The AR$p$ algorithmic framework is provided in \cref{alg:arp_framework,alg:simple_sigma_update}.}
Provided Lipschitz continuity assumptions on the $p$th-order derivative hold,  the AR$p$ algorithm requires no more than $\bigO{\eps^{-(p+1)/p}}$ evaluations of $f$ and its derivatives to compute an approximate first-order stationary point that satisfies $\norm{\nabla f(\xx_k)} \leq \eps$, which is an optimal bound for this function class \cite{carmon2020lower,cartis2022evaluation} as a function of the accuracy $\epsilon$.
Thus, as we increase the order $p$, the global evaluation complexity bound improves.
The same holds for the rate of local convergence:
the AR$p$ method achieves a $p$th-order local rate if the objective function is strongly convex and $\sigma_k$ is chosen large enough, depending on the Lipschitz constant of $\nabla^p f$ \cite{doikov2022local}.
These results, showing superior performance of higher-order methods in a worst-case sense, motivate us to develop an efficient algorithmic implementation of the AR$p$ method, in order to investigate the typical performance of these methods.

%It is widely observed that second-order methods with inexact Hessian information generally exhibit superior practical performance compared to first-order methods \cite{martens2010deep,nocedal1999numerical,erdogdu2015convergence,liu2021convergence}. Such observations align with theoretical analyses by the same authors that provide insight into why second-order methods can be expected to have lower outer iteration complexity. A key factor in designing second-order methods that are efficient in practice is being able to solve the subproblems at low computational cost.

When it comes to first- and second-order methods, the practical performance of the latter is typically superior to the former, even with inexact second-derivatives, and particularly when a high accuracy solution is sought for an ill-conditioned problem
\cite{martens2010deep,nocedal1999numerical,erdogdu2015convergence,liu2021convergence}. These numerical findings are backed by theoretical analyses and guarantees of performance in terms of problem evaluations, as well as the development/use of efficient subproblem solvers so that the overall computational cost of second-order methods is improved.

In a similar vein, it is natural to ask whether third- or higher-order regularization methods, such as AR$p$ with $p\geq 3$, can showcase superior practical performance by comparison to Newton-type algorithms. This is an ongoing investigation even for performance measures that ignore the computational cost of the subproblem solution (that now involves the (local) minimization of higher-degree multivariate polynomials). Here, we also focus on this question in the context of AR$p$ methods for $p\geq 3$, with particular numerical focus on AR$3$.
Before giving further technical details, we illustrate the potential benefits of AR$3$ in \cref{exp:harpin_slalom}.

%While this line of research makes a convincing argument that it pays off to move from first- to second-order methods if the application permits it, it remains unknown whether higher order methods, such as AR$p$ with $p\geq 3$, have the potential to further improve the practical performance of second-order methods, and this is unclear even for performance measures that ignore the computational cost of subproblem solves. Rather than using overall wall clock time, the relevant literature so far focuses on investigating if higher-order methods can be designed to achieve stable improvement over AR$2$ in terms of outer iteration complexity. The current paper will also focus on this question in the context of AR$p$ methods for $p\geq 3$. Before giving further technical details, we illustrate the potential benefits in \cref{exp:harpin_slalom}.

\begin{example}
	\label{exp:harpin_slalom}
	\begin{figure}[H]
		\centering
		\includegraphics[width=.50\textwidth]{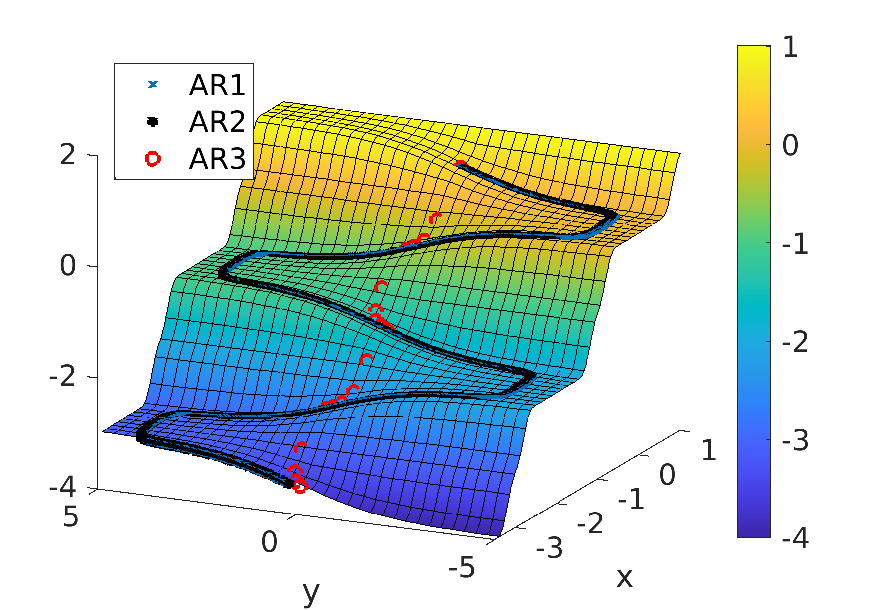}
		\includegraphics[width=.487\textwidth]{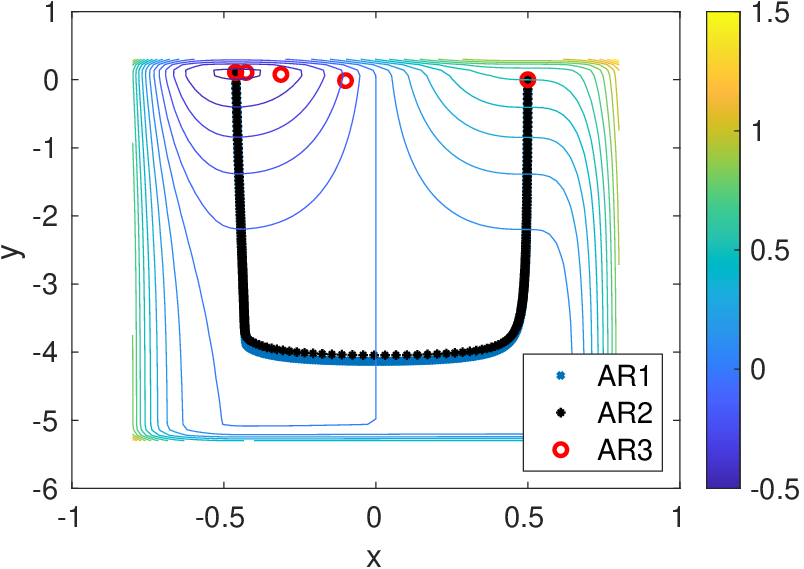}
        \caption{Two examples, namely \enquote*{slalom} (left) and \enquote*{hairpin turn} (right), illustrate the potential benefits of increasing $p$.  By leveraging curvature information, AR$2$ requires fewer iterations and can take a turn earlier than AR$1$. However, AR$3$ is capable of employing  higher-order curvature information, resulting in a dramatic decrease in the number of iterations. The total number of iterations for the \enquote*{slalom} experiment using AR$1$, AR$2$, and AR$3$ are $29\,529$, $935$, and $17$ with corresponding running times of $131.99$, $2.91$, and $0.08$ seconds, respectively, while the total number of iterations for the \enquote*{hairpin turn} experiment are $7\,551$, $230$, and $5$ with corresponding running times of $10.80$, $0.62$, and $0.02$ seconds, respectively. For more details about the construction of these functions, see \cref{sec:slalom_hairpin}.}
		\label{fig:slalom_hairpin}
	\end{figure}
    The following examples illustrate the potential for superior performance of third-order methods compared to first- and second-order methods. We constructed two functions (`slalom' and `hairpin turn') that incorporate key information about the local shape of the function inside the third-order derivative, which we plot in \cref{fig:slalom_hairpin}, together with the path of iterates generated by applying AR$1$, AR$2$, and AR$3$ to these functions. The left image in \cref{fig:slalom_hairpin} shows the periodic and unbounded slalom function \cref{eqn:slalom_function}. AR$1$ and AR$2$ both take small steps along a curving path. Only in the AR$3$ method does the local Taylor expansion include crucial information about the curvature in the negative $x$ direction, enabling the iterates to follow the steep \enquote{downhill} path. The image on right-hand side of \cref{fig:slalom_hairpin} is using the hairpin turn function \cref{eqn:hairpin_function}, which has the same structure as one turn of the slalom function but is modified such that it has a global minimum close to $(-0.5, 0)^\T$. Again, knowledge of the third derivative allows AR$3$ to find the minimizer using significantly fewer iterations than AR$1$ and AR$2$.

    In terms of details, these iterates were generated using the AR$p$ method (\cref{alg:arp_framework}), with $p=1,2,3$, where the starting point was set to $\xx_0 = (0.5, 0)^\T$ for all experiments,  $\sigma_0=50$ and $\sigma_k$ is non-decreasing, namely, $\sigma_k$ is increased when an iteration is unsuccessful (in the sense of \cref{alg:simple_sigma_update}) but never decreased.
	Note that $\sigma_k$ remained constant during both \enquote*{slalom} and \enquote*{hairpin turn} experiments, i.e., all iterations in \cref{fig:slalom_hairpin} are successful. Each subproblem is solved by the Matlab steepest descent algorithm starting at $\ss_0 = (-0.1, 0)^\T$ and terminating when $\norm{\nabla m_k(\ss)} < 10^{-9}$. The subproblem starting point is slightly shifted toward the direction of rapid decrease to provide additional opportunities for the subproblem solver to find minimizers with smaller objective function values in each iteration.
    Please see \cref{sec:slalom_hairpin} for more details on the construction of these functions.
\end{example}

\paragraph{Existing literature}

In the case of $p=1$, AR$1$ is a first-order method, and the minimizer of the model in \cref{eqn:subproblem} is a steepest descent step. The case of $p=2$ (AR$2$) recovers the well researched (adaptive) cubic regularization framework (ARC) and its associated optimal global rate of convergence for second order methods in the worst-case \cite{nesterov2006cubic,cartis2011adaptiveI,cartis2011adaptiveII,dussault2018arcq,kohler2017sub,martinez2017cubic}.
Numerically, in the ARC framework, efficient iterative algorithms are available for finding the global minimizer of the subproblem \cite{cartis2022evaluation,cartis2011adaptiveI}.
Furthermore, specific strategies for efficiently choosing the regularization parameter adaptively can be found in \cite{cartis2011adaptiveI,cartis2009trust,gould2012updating,griewank1981modification,nesterov2006cubic}.

The first paper to propose the AR$p$ algorithmic framework (with $p\geq 3$) and its associated first-order global rate of convergence for general nonconvex objectives is \cite{birgin2017worst}, with a similarly early attempt to present high-order trust region methods and higher order criticality measures  given in \cite{cartis2018FoCM}. Further works followed, showing second-order criticality results for AR$p$ \cite{cartis2020concise,cartis2020sharp} and lower complexity bounds in \cite{carmon2020lower,cartis2020sharp,carmon2021lower} that show the AR$p$ complexity bounds are optimal within a respective class of sufficiently smooth objectives. Tensor methods for convex optimization problems have been pioneered by
Nesterov starting with \cite{nesterov2021implementable} and further extended in \cite{nesterov2023inexact,nesterov2021inexact,nesterov2021superfast,nesterov2022quartic} and several other works. In terms of local convergence, see \cite{doikov2022local,welzel2025local}. Further details can be found in Chapter 4 of \cite{cartis2022evaluation}.

However, all these developments have remained generally theoretical when $p\geq 3$, so that the efficient implementation of higher-order methods (for $p \geq 3$) remains an active area of investigation in which the community has not yet settled on a consensus. Earlier works by Schnabel et al.~\cite{chow1989derivative,schnabel1991tensor,schnabel1984tensor} resulted in a practical tensor algorithm for solving unconstrained optimization problems. Schnabel et al construct a fourth-order local model based on exact first- and second-order information and choose the third- and fourth-order term such that the model interpolates previous function values and gradients. This model is used within a trust-region framework to ensure consistent progress, with successful and promising numerical results.

Recently, the AR\(p\) algorithm with \(p = 3\) was implemented (in Fortran) by Birgin et al~\cite{birgin2020use}, and an increasing regularization parameter update was developed for better efficiency, as well as a step rejection mechanism based on step length, in order to safeguard against possible pitfalls of third-order models. Their numerical experiments compared the performance of AR$2$/ARC and AR$3$ for a standard test set of unconstrained optimization problems, with promising results in terms of function and derivative evaluations. We discuss at length their implementation in later sections and benchmark ours against theirs as well.

%For example, there is already some positive evidence  in \cite{birgin2020use} for third-order methods with fourth-order regularization,   showing that the latter can achieve improved performance over (the second-order) AR$2$ method, in terms of outer iteration/evaluation counts. We aim to design an efficient algorithmic and software framework that allows for a diverse choice

Other recent developments for tensor methods include
 \cite{ahmadi2024higher}, that introduces a convex higher-order Newton model with polynomial work per iteration, with adaptive regularization parameter updates and extensions to the nonconvex case  developed in \cite{zhu2024global}. However, these tensor variants rely on sums-of-squares
  semidefinite programming reformulations, which do not scale well with problem dimension. Purposely-designed algorithms for solving the (non-convex) AR$3$ subproblem have been proposed in \cite{cartis2023second,zhu2022quartic,zhu2023cubic}, the first of which  generalizing an idea by Nesterov \cite{nesterov2022quartic} from the convex to the nonconvex case.

\paragraph{Contributions} The numerical performance of tensor methods for general unconstrained optimization problems remains insufficiently explored/understood. Thus the first aim of our paper is to present an in-depth numerical study of their key features on standard test problems, in the case of third-order methods with fourth-order regularization (AR$3$), contrasting them to second-order methods' behavior. This investigation highlights the challenges that tensor methods bring in order to make them numerically efficient, which we then aim to improve by proposing some novel procedures.
In particular, we analyze how certain algorithmic components affect the number of function and derivative evaluations when moving from $p = 2$ to $p = 3$. Our viewpoint is primarily algorithmic, and independent of the particular choice of subproblem solver (which we keep fixed throughout our main experiments).

A first crucial aspect is the choice of the regularization parameter (initial value and adaptive update scheme) and associated iterate update, which is also similarly important to the performance of second-order regularization methods. For the latter, an interpolation-based update procedure was designed in \cite{gould2012updating}, that uses information from previous iterations to estimate an appropriate regularization parameter for the current iteration, thus typically improving the overall evaluation counts of AR$2$. Here, we extend this scheme to AR$p$ methods with $p\geq 3$, and extensively test it numerically for AR$3$; see \cref{sec:interpolation-sigma-update}. Practical proposals for the initial choice of $\sigma_k$, $\sigma_0$, are also  introduced; see \cref{sec:sigma0}.

An essential difference between AR$2$ and AR$3$ is that the latter's subproblem can have multiple local minima even when the Hessian is positive definite, as long as the regularization parameter is sufficiently small (which may be desirable as it allows long steps). Although choosing any of these local minimizers is sufficient to ensure the optimal worst-case complexity bound, the practical efficiency of the ensuing AR$3$ variant is highly dependent on this choice, as well as the subproblem termination condition (whether absolute or relative error conditions are used). Thus, we investigate
the fundamental differences in AR$p$ subproblem solutions when $p \geq 3$ (compared to second-order variants), namely, the discontinuous changes in subproblem solutions as a function of changing regularization parameter values, that may lead to unsuccessful steps and slow progress. We propose a practical technique that detects
so-called  (directionally) `transient' minimizers and pre-rejects them without evaluating the objective, since they would not improve its value; aiming to select as trial step a `persistent' subproblem minimizer, that would be successful; \cref{sec:prerejection} addresses AR$p$ subproblem solutions and the pre-rejection technique which we then show improves AR$3$ performance significantly.

Afterwards, our best-performing AR$3$ variants are benchmarked against corresponding AR$2$ variants, as well as the third-order tensor methods in \cite{birgin2020use}. Encouraging results are obtained that confirm that efficient AR$3$ variants can outperform second-order variants in terms of objective and derivative evaluations and subproblem solves counts.

To summarize, our contributions are as follows.
\begin{itemize}
	\item We introduce a Taylor-based selection strategy which adaptively chooses a suitable initial regularization parameter $\sigma_0$ with low computational cost.  We extend the interpolation-based updating strategy for $\sigma_k$ and $x_{k+1}$ from $p=2$ in \cite{gould2012updating}, to any positive integer $p \geq 3$. This updating strategy allows $\sigma_k$ to change dramatically to a suitable level based on interpolation models.
	\item We identify fundamental differences between the local minima of the subproblems for $p=2$ and $p \geq 3$. Based on this, we introduce a novel pre-rejection module capable of \enquote*{predicting} unsuccessful steps before any function evaluations are carried out, for general $p \geq 3$. We also assess, numerically, the effect of different subproblem termination conditions on the overall algorithm performance.
	\item Numerical studies of the new versus existing techniques, provided in each section, showcase the efficiency of the proposed modifications of the basic AR$3$ algorithm, as illustrated by convergence dot plots and performance profiles. Furthermore, we provide benchmarks of our best performing variants against second- and third-order methods on standard test problems.
    \item We provide an efficient, extensive and modular software package\footnote{\url{https://github.com/karlwelzel/ar3-matlab}} in MATLAB that includes many AR$2$ and AR$3$ variants (including options for Hessian-free and tensor-free operations), allowing easy use and experimentation for interested users.
\end{itemize}

\subsection{\texorpdfstring{The AR$p$ algorithmic framework}{The ARp algorithmic framework}}
Since we address several variants of the AR$p$ algorithm and different updating strategies for the regularization parameter, we describe a generic AR$p$ algorithmic framework in \cref{alg:arp_framework} into which we will fit the techniques we propose and implement.
\begin{algorithm}[ht]
	\KwIn{%
		A starting point $\xx_0 \in \R^d$,
		a tolerance $\eps > 0$, and
		an initial regularization parameter $\sigma_0 > 0$
	}
	\For{$k = 0, 1, \dots$}{
		\If{$k = 0$ or $\xx_k \neq \xx_{k-1}$}{
			Compute $f(\xx_k)$ and $\nabla f(\xx_k)$ \label{lin:arp_compute_derivatives}\;
			\uIf{$\norm{\nabla f(\xx_k)} < \eps$}{
				\Return{approximate local minimizer $\xx_k$}
			}
                \Else{Compute $\nabla^2 f(\xx_k), \ \dots, \ \nabla^p f(\xx_k)$}
		}
        Construct the local model
		$m_k(\ss) = t^p_{\xx_k}(\ss) + \frac{\sigma_k}{p + 1} \norm{\ss}^{p+1}$ and find an approximate local minimizer $\ss_k$ that satisfies $m_k(\ss_k) < m_k(\vek{0})$ and a subproblem termination condition
		\label{lin:arp_solve_subproblem}\;
		Compute $\xx_{k+1}$ and $\sigma_{k+1}$ via one of the updating strategies of \cref{tbl:sigma_updates} \label{lin:arp_sigma_update}
	}
	\caption{The generic AR$p$ framework}\label{alg:arp_framework}
\end{algorithm}

\begin{procedure}[ht]
	\KwParameters{$0 < \eta_1 \leq \eta_2 < 1$, $0 < \gamma_1 < 1 < \gamma_2$, $\sigma_{\min} > 0$}
	\KwDefaults{$\eta_1=0.01$, $\eta_2=0.95$, $\gamma_1=0.5$, $\gamma_2=3$, $\sigma_{\min}=10^{-8}$}
	Compute the predicted function decrease $\deltapred_k = t_k(\vek{0}) - t_k(\ss_k)$\;
	Compute the actual function decrease $\deltaact_k = f(\xx_k) - f(\xx_k + \ss_k)$\;
	Compute the decrease ratio $\rho_k = \deltaact_k / \deltapred_k$\;
	\uIf{$\rho_k \geq \eta_2$}{
		Set $\xx_{k+1} = \xx_k + \ss_k$ and $\sigma_{k+1} = \max\{\gamma_1 \sigma_k, \sigma_{\min}\}$ \tcp*[r]{very successful step}
	}
	\uElseIf{$\rho_k \geq \eta_1$}{
		Set $\xx_{k+1} = \xx_k + \ss_k$ and $\sigma_{k+1} = \sigma_k$ \tcp*[r]{successful step}
	}
	\Else{
		Set $\xx_{k+1} = \xx_k$ and $\sigma_{k+1} = \gamma_2 \sigma_k$ \tcp*[r]{unsuccessful step}
	}
	\caption{Simple update}
	\label[procedure]{alg:simple_sigma_update}
\end{procedure}

Two key parts are unspecified in Algorithm \ref{alg:arp_framework}: the subproblem termination condition in \cref{lin:arp_solve_subproblem} and the updating strategy in \cref{lin:arp_sigma_update}.
In terms of termination condition (TC), we  consider two different options: limiting the norm of the gradient by an \textit{absolute} constant $\eps_{\textrm{sub}} > 0$ as in
\begin{align}
	\label{eqn:subproblem_absolute_tc}
	\norm{\nabla m_k(\ss_k)} \leq \eps_{\textrm{sub}},
	\tag{TC.a}
\end{align}
or bounding the same norm  \textit{relative} to the size of the step, namely,
\begin{align}
	\label{eqn:subproblem_relative_tc}
	\norm{\nabla m_k(\ss_k)} \leq \theta \norm{\ss_k}^p
	\tag{TC.r}
\end{align}
for some (algorithm parameter) $\theta > 0$.
In the latter case, the smaller the step becomes, the higher the precision to which the subproblem is solved. \Cref{eqn:subproblem_relative_tc} provides the required subproblem accuracy so that the  $O(\eps^{-(p+1)/p})$ global iteration/evaluation complexity holds.
%despite solving the subproblem only approximately.
We will refer to \cref{eqn:subproblem_absolute_tc} and  \cref{eqn:subproblem_relative_tc} as the absolute and the relative subproblem termination condition, respectively.

The role of the \textit{updating strategy} is to determine whether the current step should be accepted ($\xx_{k+1} = \xx_k + \ss_k$) or rejected ($\xx_{k+1} = \xx_k$) and to update $\sigma_k$ in such a way to ensure progress, in particular by increasing $\sigma_k$ whenever the step is rejected.
A simple and standard updating strategy (with provably optimal worst-case complexity \cite{birgin2017worst}) is given in \cref{alg:simple_sigma_update}.
This approach measures the predicted decrease $\deltapred_k$ using the decrease in the $p$th-order Taylor expansion, and evaluates the success of the step by calculating $\rho_k = \deltaact_k / \deltapred_k$.
Depending on the value of $\rho_k$, the parameter $\sigma_k$ is either decreased by a constant, kept unchanged, or increased by a constant.
We refer to \cref{alg:arp_framework} using \cref{alg:simple_sigma_update} in \cref{lin:arp_sigma_update} as \textsf{AR$p$-Simple}, where $p$, as always, describes the order of the highest derivative.
Throughout the paper we introduce four additional updating strategies that can be used inside \cref{alg:arp_framework} in \cref{lin:arp_sigma_update}.
\Cref{tbl:sigma_updates} gives an overview of all of these variants.

\begin{table}
	\caption{Overview of the named AR$p$ variants in this paper.}\label{tbl:sigma_updates}
	\centering
	\begin{tabular}{ll}
		\toprule
		Name & Updating strategy used in \cref{lin:arp_sigma_update} of \cref{alg:arp_framework} \\
		\midrule
		\textsf{AR$p$-Simple} & \cref{alg:simple_sigma_update} (Simple update \cite{birgin2017worst}) \\
		\textsf{AR$p$-Simple\textsuperscript{+}} & \cref{alg:simple_sigma_update} \& \cref{alg:prerejection} (pre-rejection) \\
		\textsf{AR$p$-Interp} & \cref{alg:interpolation_sigma_update_full} (Interpolation update) \\
		\textsf{AR$p$-Interp\textsuperscript{+}} & \cref{alg:interpolation_sigma_update_full}
		\& \cref{alg:prerejection} \& \cref{rem:interp_plus} \\
		\textsf{AR$p$-BGMS} & \cref{alg:bgms_sigma_update} (BGMS update \cite{birgin2020use}) \\
		\bottomrule
	\end{tabular}
\end{table}

\subsection{Implementation details and preliminaries}
\label{sec:implementation_details}
In this paper, we aim to explore ways to enhance the efficiency of AR$p$ methods for $p \geq 3$, with particular focus on AR$3$. Each existing or new feature that we explore is investigated numerically,  with numerical studies included at the end of each section, and culminating with benchmarking experiments. Since there are running numerical examples and tests throughout the paper, we describe here  the general setup for our experiments.\footnote{The reader may prefer to skip these technical details at present and return to them when encountering the first numerical experiments of the paper.}
%Specifically, we  use AR$3$ to illustrate the impact of various aspects of AR$p$ algorithms for $p \geq 3$, with numerical experiments presented at the end of each section.
%First, let us summarize the general setup for these experiments.
To this end, we implemented all AR$p$ variants discussed in this paper for $p=2$ and $p=3$ in a joint MATLAB code base, which can be found at \url{https://github.com/karlwelzel/ar3-matlab}.
Moreover, the data from all experiments relevant for the following graphs are accessible at \url{https://wandb.ai/ar3-project/all_experiments}.
All experiments were conducted on a workstation with an Intel Core i5-13500T CPU (14 cores / 20 threads, up to 4.6\,GHz) running Ubuntu~22.04.

\paragraph{Parameter settings}
Throughout the paper, we set the first-order tolerance for  \cref{alg:arp_framework} to $\eps = 10^{-8}$.
By default, the subproblem solver uses \cref{eqn:subproblem_absolute_tc} with $\eps_{\textrm{sub}} = 10^{-9}$, i.e., it terminates when $\norm{\nabla m_k(\ss)} \leq 10^{-9}$.
Such strict requirement for the subproblem is chosen to make the algorithm's performance less dependent on any specific subproblem solver used.
In our implementation, by default AR$2$ subproblems are solved using \textsf{MCM} \cite[Algorithm 6.1]{cartis2011adaptiveI}, which is a factorization-based solver that computes a global minimizer of the AR$2$ subproblem to high accuracy.
The AR$3$ subproblems in turn are solved using our AR$2$ implementation throughout the experiments in this paper.
In the following, we will refer to the algorithm solving the AR$3$ subproblems as the inner solver and, in the case where AR$2$ is used to solve these problems, the algorithm used inside AR$2$ as the inner-inner or subsubproblem solver.
For our experiments, we use \textsf{AR2-Simple} with default parameters, starting values $\ss_0 = \vek{0}$ and $\sigma_0 = 10^{-8}$ and termination condition \cref{eqn:subproblem_absolute_tc} with $\eps_{\textrm{sub}} = 10^{-9}$ as the inner solver and \textsf{MCM} with termination condition \cref{eqn:subproblem_absolute_tc} with $\eps_{\textrm{sub}} = 10^{-10}$ as the inner-inner solver, unless otherwise specified.
In this high-accuracy regime \textsf{AR$2$-Simple} produces essentially the same AR$3$ trial steps as more sophisticated AR$2$ variants, while involving fewer hyperparameters and providing a simple, standard inner solver across all experiments.
In addition to terminating when $\norm{\nabla f(\xx_k)}$ is sufficiently small, the algorithm and the subproblem solvers also terminate after a maximum of \num{1000} iterations or when numerical issues are detected.
Unless otherwise stated, algorithm parameters for all updating strategies are kept at their default values.
In particular, we use $\eta_1 = 0.01$ and $\eta_2 = 0.95$ to determine successful and very successful iterations following \cite{gould2012updating}.

\paragraph{Hessian- and tensor-free implementation}
Our AR$3$ implementation is flexible in the way the Hessian and the third derivative can be provided to the algorithm.
For Hessians it can accept an explicit matrix $\nabla^2 f(\xx)$ or a way to perform matrix-vector products for arbitrary vectors $\vv \mapsto \nabla^2 f(\xx)\vv$.
For third derivatives, explicit tensors $\nabla^3 f(\xx)$, tensor-vector products $\vv \mapsto \nabla^3 f(\xx)[\vv]$ returning matrices, or tensor-vector-vector products $(\vv, \ww) \mapsto \nabla^3 f(\xx)[\vv, \ww]$ returning vectors are possible.
The key restriction is the inner-inner solver for AR$3$, i.e., the algorithm used as a subproblem solver for AR$2$, which in turn is a subproblem solver for AR$3$.
With \textsf{MCM} \cite[Algorithm 6.1]{cartis2011adaptiveI} as the inner-inner solver, explicit matrices are required in order to compute factorizations, and therefore explicit Hessians and explicit tensors or tensor-vector products are needed.
We have also implemented \textsf{GLRT} \cite[Section 10]{cartis2022evaluation}, an iterative Krylov subspace solver for cubic regularization problems, as the inner-inner solver, allowing for fully matrix-free derivative evaluations.
For most of our experiments we use \textsf{MCM} as the inner-inner solver for two reasons.
Firstly, most of the test problems are relatively small in dimension, so the cost of the Cholesky factorizations required by \textsf{MCM} is acceptable both in terms of efficiency and storage.
Secondly, \textsf{MCM}, as a factorization-based solver, is expected to solve the AR$2$ subproblem to high accuracy, which allows us to verify the effectiveness of the pre-rejection mechanism more precisely in \cref{sec:prerejection}.
For large-scale optimization problems, we recommend using \textsf{GLRT}.
A comparison between \textsf{MCM} and \textsf{GLRT} can be found in \cref{sec:tensor_free}.
A summary of how the Hessians and third derivatives are provided for our implementations of the test problems can be found in \cref{tab:storage_formats} in \cref{sec:functions}.

\paragraph{Problem set}
The problem test set used in this paper includes (all or a subset of) the $35$ unconstrained minimization problems from Moré, Garbow, and Hillstrom's (MGH) test set \cite{more1981testing}, as implemented by Birgin et al.~\cite{birgin2018third}.\footnote{Their code can be found at \url{https://github.com/johngardenghi/mgh} and \url{https://www.ime.usp.br/~egbirgin/sources/bgms2}.}
As most of the problems in the MGH test set are small dimensional ($2 \leq d \leq 40$), we add additional test functions that are larger scale ($d = 100$), including the multidimensional Rosenbrock function \cref{eqn:rosenbrock}, a nonlinear least-squares function \cref{eqn:NLS} and four regularized third-order polynomials \cref{eqn:regularized_cubic}. For the latter,
we construct a well-conditioned regularized third-order polynomial and three ill-conditioned regularized third-order polynomials (with either an ill-conditioned Hessian, an ill-conditioned tensor,\footnote{By \enquote*{ill-conditioned tensor}, we refer to the construction of the tensor via a canonical polyadic (CP) decomposition \cite{hitchcock1927expression} with an ill-conditioned super-diagonal core.} or both an ill-conditioned Hessian and tensor). For more details, see \cref{sec:functions}.
Finally, unless otherwise specified, our problem set is chosen by including half of the MGH test problems (those with odd labels) along with the six constructed problems mentioned above, resulting in a total of $24$ problems.
The starting points for all MGH test problems are set to their default values, while the starting points for the six additional constructed problems are provided in \cref{sec:functions}.
The selection of our representative test set is guided by the following considerations: firstly, since open source libraries contain only a limited number of test functions with (calculated/coded) third-order derivatives, we added additional examples of our own; secondly, to prevent over-fitting, we withhold
half of the MGH test problems until the final benchmark; lastly, we
ensure that the test set includes high(er)-dimensional and ill-conditioned problems.
%On that basis, we believe that the set of 24 problems is representative, despite its small size.

\paragraph{Evaluation Metrics}
To measure the performance of different variants of the AR$p$ algorithm, we consider three different metrics in this paper: number of objective function evaluations, number of derivative evaluations and number of subproblem solves required to compute a solution.
Note that for the purposes of counting derivative evaluations it does not matter which derivative is considered.
The number of gradient evaluations of the objective function is the same as the number of Hessian evaluations, which again is the same as the number of third derivative evaluations (for AR$3$).
We chose these three metrics to be as much as possible independent of implementation details, and instead focus on the same oracle complexities considered in theory that guarantees worst-case improvements of higher-order algorithms over lower-order ones.
%New and sophisticated subproblem solvers, for example, might significantly improve the runtime of regularized third-order methods, but will not change number of objective function and derivative evaluations.
%because they produce the same model minimizers (when requiring high-accuracy solves).
In this paper, our primary aim is to understand which algorithmic strategies (such as the choice of $\sigma_0$, interpolation-based updates, and pre-rejection mechanisms) reduce these evaluation-based measures for AR$3$, rather than to design the fastest possible implementation. For this reason, we treat wall-clock time as secondary and do not optimize or compare it systematically across different inner or inner-inner solvers.

\paragraph{Structure of the paper} The remainder of this paper is organized as follows. In \cref{sec:sigma0}, we first explore the performance of \cref{alg:arp_framework} with various $\sigma_0$ options to highlight the importance of choosing a good starting value of the regularization parameter. Based on these results, we then select an efficient $\sigma_0$ setting for the remainder of our experiments and as default in our code. In \cref{sec:interpolation-sigma-update}, we introduce  an  interpolation-based updating strategy in AR$p$ with $p\geq 3$ and investigate its numerical performance for $p=3$. Then, we present a novel pre-rejection framework in \cref{sec:prerejection} that aims to avoid unnecessary function evaluations, while the impact of subproblem termination criteria is addressed  in \cref{sec:theta}. Finally, in \cref{sec:benchmark}, we provide a comprehensive benchmark comparing our implementations of second- and third-order regularization methods, as well as those in \cite{birgin2020use}, on the full set of $35$ MGH test problems.

%\subsubsection*{Notation and definitions}
\paragraph{Notation and definitions}
\label{sec:notation}
We end this section by declaring the notation and definitions used throughout the paper. Vectors and matrices will always be denoted by bold lower-case and bold upper-case letters, respectively, e.g., $ \vv $ and $ \VV $.
We use regular lower-case and upper-case letters to denote scalar constants, e.g., $d$ or $L$. The transpose of a real vector $ \vv $ is denoted by $ \vv^{\T} $.
The inner product of two vectors $ \vv$ and $\yy $ is denoted by $ \dotprod{\vv, \yy} = \vv^{\T} \yy $. $ \|\vv\| $ and $ \|\VV\| $ denote vector- and matrix-$ \ell_{2} $-norms of a vector $\vv$ and a matrix $ \VV $ respectively.
A superscript $p$ either denotes the power $p$ or a quantity using $p$th-order information, while a subscript $k$ denotes the iteration counter of outer iterations of the AR$p$ algorithm.
The first $p$ derivatives of $f$ at $\xx_k$ are denoted as $\nabla f(\xx_k), \nabla^{2} f(\xx_k), \ldots, \nabla^{p} f(\xx_k)$.
The application of a $p$-tensor $\tns{T}$ as a multilinear map to vectors $\ss_1$, $\ldots$, $\ss_m$, where $m \leq p$, is written as $\tns{T} [\ss_1, \ldots, \ss_m]$, and $\tns{T} [\ss]^m$ represents the special case when $\ss = \ss_1 = \ldots = \ss_m$.
In both cases the result is a $(p - m)$-tensor.
In particular, when \(m = p\) we get a scalar.
The minimum function value of any objective $f$ is denoted by $f^*$.

\section{\texorpdfstring{Choosing the initial regularization parameter $\sigma_0$}{Choosing the initial regularization parameter sigma0}}
\label{sec:sigma0}

The choice of regularization parameter $\sigma$ is crucial for the performance of  AR$p$ algorithms.
It can be viewed, by analogy to trust-region methods, as an approximate inverse of a trust-region radius.
Choosing $\sigma$ too large results in successful but small steps, while choosing too small a value leads to large but unsuccessful steps.
In this section, we use \textsf{AR$3$-Simple} (\cref{alg:arp_framework} combined with \cref{alg:simple_sigma_update}) to demonstrate the impact on algorithm performance of  the choice of the initial regularization parameter $\sigma_0$.
This straightforward experiment allows us to describe in detail the visualizations we use throughout the paper.

We consider $\sigma_0=10^{-8}$, $\sigma_0=1$ and $\sigma_0=10^8$, as well as a simple and cost-effective method for approximating the \enquote*{right} value of $\sigma_0$ using one additional function evaluation.
The function is evaluated at an offset point $\yy$, whose entries are randomly chosen from a standard normal distribution, and $\sigma_0$ is computed as follows:
\begin{equation}\label{eqn:sigma0_taylor}
	\sigma_0 = \max \left\{(p+1) \frac{\abs{f(\xx_0+\yy) - t_{\xx_0}^p(\yy)}}{\norm{\yy}^{p+1}}, \sigma_{\min} \right\},
\end{equation}
where $t_{\xx_0}^p$ is defined as in \cref{eqn:Taylor}.
The idea behind this approach is that $\sigma_0$ is chosen to compensate for the error in the Taylor approximation, which is why this method is referred to as \texttt{Taylor} in the plots below.
It also has a connection to the Lipschitz constant $L_p$ of the $p$th derivative.
We know from \cite[Lemma 2.1]{cartis2020sharp} that $\abs{f(\xx_0+\yy) - t_{\xx_0}^p(\yy)} \leq \frac{L_p}{(p+1)!} \norm{\yy}^{p+1}$, leading to $\sigma_0 = \sigma_{\min}$ or $\sigma_0 \leq \frac{L_p}{p!}$.
While this approximation cannot compute the exact Lipschitz constant, it provides a value that is within the correct order of magnitude in most cases.

%\subsection{Numerical illustrations}
\subsection{\texorpdfstring{Numerical studies of the initial regularization parameter $\sigma_0$}{Numerical studies of the initial regularization parameter sigma0}}

We use two different types of graphs to visualize our numerical results: one type focuses on individual test functions to illustrate the effect of specific features introduced in different algorithmic variants, while the second type compares the  overall performance of algorithms across a range of test problems.
We introduce them below, alongside the interpretation of the resulting graphs for the choice of \(\sigma_0\).

\subsubsection{Convergence dot plots}
\label{sec:convergence_dot}

\begin{sidewaysfigure}
	\centering
	\subimport{plots/sigma0}{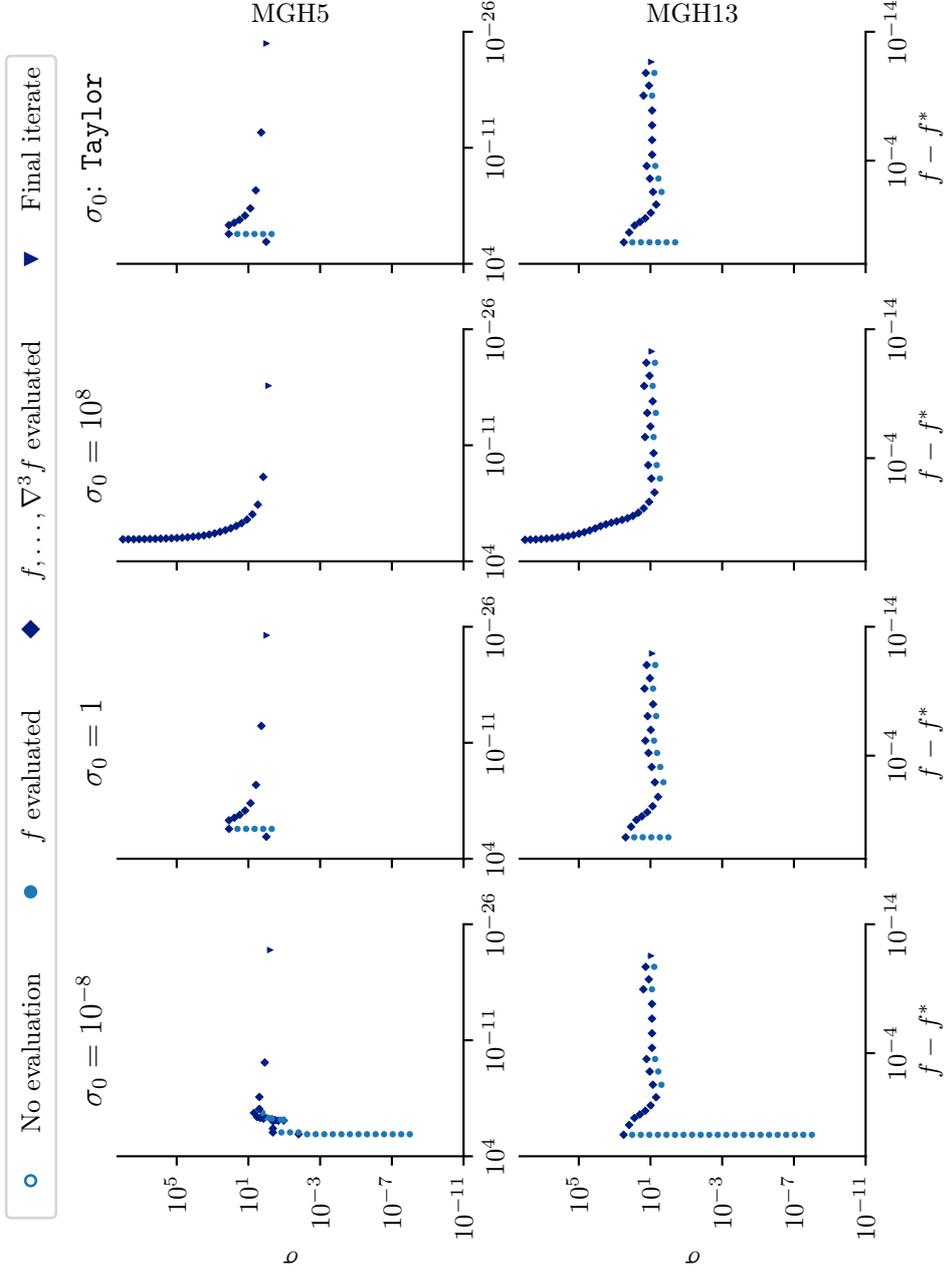}
	\caption{
    The convergence dot plot shows the impact of different $\sigma_0$ values for \textsf{AR$3$-Simple} using \cref{eqn:subproblem_absolute_tc} with $\eps_{\textrm{sub}} = 10^{-9}$.
    An extremely small $\sigma_0$ results in more unsuccessful iterations (blue circles), while an extremely large $\sigma_0$ leads to many successful iterations (dark blue diamonds) with small step sizes, resulting in slow convergence.
    \texttt{Taylor} is able to automatically adjust $\sigma_0$ to a reasonable value, providing the best performance.
    See \cref{sec:convergence_dot} for details on the convergence dot plots.
    }
    \label{fig:sigma0-convergence}
\end{sidewaysfigure}

To illustrate how the progress of AR$3$ towards the function minimizer interacts with the regularization parameter $\sigma_k$, we designed a customized plot, which we refer to as a \enquote*{convergence dot plot}.
In this type of plot (see for example, \cref{fig:sigma0-convergence}), each iteration $k$ is represented by a dot.
The position on the vertical axis is determined by $\sigma_k$ and the horizontal position by $f(\xx_k)-f^*$, where $f^*$ is the minimum function value of the objective function.\footnote{For the two examples used in \cref{fig:sigma0-convergence} and throughout the paper we use the known analytical minimum, namely $f^* = 0$, in both cases.}
Both axes are log-scaled and the x- and y-coordinates of the dots are rounded up to $10^{-25}$ and $10^{-10}$ respectively, if necessary.
The type of dot indicates the oracle calls at the tentative iterate $\xx_k + \ss_k$:
If the step $\ss_k$ is accepted, meaning $\xx_{k+1} = \xx_k + \ss_k$ and both the function and all derivatives are evaluated at this point, the dot is represented by a filled diamond.
If the step $\ss_k$ is rejected, only the function value at $\xx_k + \ss_k$ is computed before the algorithm decides to remain at the current iterate ($\xx_{k+1} = \xx_k$), and the dot takes the form of a filled circle.
Lastly, if the step is \enquote*{pre-rejected}, meaning that neither the function value nor any derivatives are evaluated at $\xx_k + \ss_k$, the dot is shown as an empty circle.
Since we postpone the discussion of pre-rejection mechanisms until \cref{sec:prerejection}, the first few plots will not include any empty circles.
A special case occurs at the very last iterate, where no tentative step $\ss_k$ is computed, as
there are no further evaluations and the value $\sigma_k$ is computed but not used.
We still include this iterate in the plot, because it nicely illustrates the convergence of $f(\xx_k)$ to $f^*$, but we use a triangle as a marker.

Since the x-axis represents the function value gap and is oriented such that smaller function values are on the right, any successful iteration will produce a new dot to the right of the current one.
Similarly, on the y-axis, larger values of $\sigma_k$ are positioned higher up, so unsuccessful iterations will produce a new dot above the current one.
In other words, to follow the sequence of iterations in order, one should read bottom-to-top within each vertical line of dots and left-to-right across the plot.
The total number of function evaluations required to determine $\xx_k$ can be established by counting how many of the dots corresponding to iterations $0$ to $k-1$ are filled (either circle or diamond) and adding one for the function evaluation at $\xx_0$.
Similarly, the number of derivative evaluations is one larger than the number of diamond-shaped dots among iterations $0$ to $k-1$.

\Cref{fig:sigma0-convergence} presents the convergence dot plots for problems $5$ and $13$ from the MGH test set, corresponding to the \enquote*{Beale} function \cite{beale1958iterative} and the \enquote*{Powell singular} function \cite{powell1962iterative}, respectively.
We will keep them as running examples for all numerical illustrations in this paper, as the plots are generally straightforward to interpret and display the typical behaviour of our method(s).
These two functions exhibit different local convergence behaviours, as can be observed in \cref{fig:sigma0-convergence}.
Since the x-axis is log-scaled, equispaced diamond dots indicate that the successful iterates are converging linearly with respect to the function value. As the algorithm progresses, increasing distance between these dots corresponds to superlinear convergence.

For MGH5, there is clear superlinear convergence of the function values to the objective's minimum in each case.
This aligns with the theory predicting that for (locally) strongly convex functions with Lipschitz continuous $p$th derivatives, the local convergence rate of AR$p$ methods is of $p$th-order \cite{doikov2022local,welzel2025local}. Therefore, we should expect third-order convergence asymptotically.
In contrast, the MGH13 function was constructed by Powell such that the Hessian is singular at the minimizer, meaning that second-order methods only converge at a linear rate. For this function, the third derivative at the minimizer is zero, so even with additional derivative information, the local convergence rate remains linear.\footnote{In general, in directions that lie in the null space of $\nabla^2 f(\xx_*)$, the third derivative must also be zero at a local minimizer $\xx_*$, so we cannot expect any acceleration of the local convergence rate when the Hessian is degenerate.}

Returning to the influence of $\sigma_0$ on the algorithm's behaviour, \cref{fig:sigma0-convergence} clearly illustrates the point made earlier: there is an optimal value or \enquote*{sweet spot} for $\sigma_0$ (and $\sigma_k$ in general).
When $\sigma_0$ is excessively large (e.g., $\sigma_0 = 10^8$), all iterations are successful, but the progress is minuscule until $\sigma_k$ is reduced to an appropriate level.
Conversely, when $\sigma_0$ is too small (e.g., $\sigma_0 = 10^{-8}$), the initial iterations are all unsuccessful until, once again, the regularization parameter adjusts to the correct level.
For the two problems shown in \cref{fig:sigma0-convergence}, the \texttt{Taylor} approximation computes a $\sigma_0$ very close to $1$.
While this may not be the perfect value, it is at least within a few orders of magnitude of the right value, significantly reducing the number of iterations and function evaluations compared to the two extreme values of $\sigma_0$.

\subsubsection{Performance profile plots}
\label{sec:performance_profile}

\begin{sidewaysfigure}
	\centering
	\subimport{plots/sigma0}{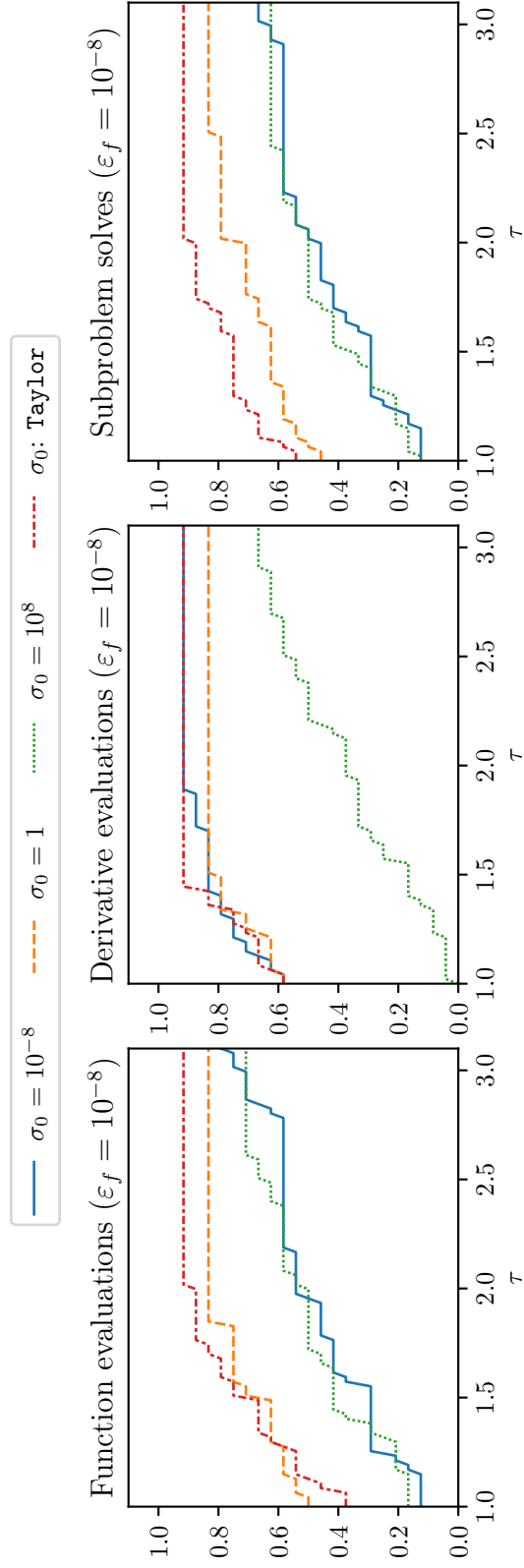}
	\caption{
    The performance profile plot shows the impact of using different $\sigma_0$ values for \textsf{AR$3$-Simple} using \cref{eqn:subproblem_absolute_tc} with $\eps_{\textrm{sub}} = 10^{-9}$.
    Although \texttt{Taylor} requires one extra function evaluation, it provides the most favourable and stable performance across all three measures, particularly in terms of subproblem solves.
    See \cref{sec:performance_profile} for details on the performance profile plots.}
	\label{fig:sigma0-performance-profile}
\end{sidewaysfigure}

We aim to determine whether the conclusions drawn from our two test problems also apply to the entire test set.
Performance profile plots are a standard tool for comparing the performance of different algorithms with respect to a given cost \cite{dolan2002benchmarking,gould2016note}.
In this work, we follow the approach and notation from \cite{birgin2020use}, considering the value of $\Gamma_i(\tau, \eps_f)$ for each method $M_i$.
This $\Gamma_i(\tau, \eps_f)$ represents \enquote*{the proportion of problems for which method $M_i$ was able to find an $\eps_f$-approximation to a solution with a cost up to $\tau$ times the cost required by the most efficient method.}
In this context, an $\eps_f$-approximate solution refers to an iterate $\xx_k$ during the algorithm's run for which
\begin{equation}
	\frac{f(\xx_k) - f_{\textrm{best}}}{\max\{1, \abs{f_{\textrm{best}}}\}} < \eps_f.
\end{equation}
Since the minimum function value is often unknown, we use the smallest function value $f_{\textrm{best}}$ found by any of the methods as a practical proxy.

As described in \cref{sec:implementation_details}, our test set consists of $24$ problems, including $18$ odd-numbered MGH problems and six additional large-dimensional problems.
In the main paper, we report performance profile plots using the number of function evaluations, derivative evaluations and subproblem solves as the cost metrics, with $\eps_f = 10^{-8}$ as the required function value accuracy.
Additional performance profiles for different values of $\eps_f$ and $\tau$ can be found in \cref{sec:additional_experiments}.

Analysing \cref{fig:sigma0-performance-profile}, we find that extreme values of $\sigma_0$ require significantly more function evaluations and subproblem solves compared to a more reasonable choice, such as $\sigma_0 = 1$.
The very large $\sigma_0$ also results in more frequent derivative evaluations due to many successful iterations with minimal progress.
Since the \texttt{Taylor} method of estimating $\sigma_0$ is competitive across all metrics and adapts to the scaling of the problem, we use it for the remainder of the paper.

\section{\texorpdfstring{An interpolation-based update for AR$p$}{An interpolation-based update for ARp}}
\label{sec:interpolation-sigma-update}

After establishing that the correct choice of the regularization parameter is essential for an efficient AR$p$ method, we consider the update mechanism for $\sigma_k$ next.
Gould, Porcelli and Toint~\cite{gould2012updating} introduced a method for updating $\sigma_k$ based on a one-dimensional interpolation of the objective function.
While their work  addresses the case  $p=2$, we are motivated by this approach to introduce an interpolation-based update for an arbitrary $p \geq 2$.
The primary advantage of the interpolation-based strategy is its ability to adapt more quickly to the optimal level of $\sigma$ in many situations, using only the information that is already available and avoiding costly computations.
%Given that the update of the regularization parameter is crucial for the efficiency of AR$p$ methods, having such a strategy available is essential for AR$p$ numerical studies.

\subsection{Derivation}

The general structure presented in \cref{alg:interpolation_sigma_update} is similar to the simple update in \cref{alg:simple_sigma_update}: a decrease ratio $\rho_k$ is computed, and $\sigma_k$ is adjusted by a constant factor as long as $\rho_k \in (0, 1)$.
More sophisticated rules are applied in \cref{alg:interpolation_sigma_update} only when the step is either extremely successful ($\rho_k \geq 1$) or extremely unsuccessful ($\rho_k < 0$).
Furthermore, the predicted function decrease is now calculated on the basis of the decrease in $m_k$, which is strictly smaller than the decrease in $t_k$ used in \cref{alg:simple_sigma_update}. Therefore, the particular criterion used in \cref{alg:interpolation_sigma_update}  increases the likelihood of a step being considered extremely successful, and this has a positive impact on  practical performance.

\begin{procedure}
	\KwParameters{$0 < \eta_1 \leq \eta_2 < 1$, $0 < \gamma_1 < 1 < \gamma_2$, $\sigma_{\min} > 0$}
	Compute the predicted function decrease $\deltapred_k = m_k(\vek{0}) - m_k(\ss_k)$\;
	Compute the actual function decrease $\deltaact_k = f(\xx_k) - f(\xx_k + \ss_k)$\;
	Compute the decrease ratio $\rho_k = \deltaact_k / \deltapred_k$\;
	\uIf(\tcp*[f]{extremely successful step}){$\rho_k \geq 1$}{
		Set $\xx_{k+1} = \xx_k + \ss_k$ and determine $\sigma_{k+1}$ by interpolation
	}
	\uElseIf(\tcp*[f]{very successful step}){$\rho_k \geq \eta_2$}{
		Set $\xx_{k+1} = \xx_k + \ss_k$ and $\sigma_{k+1} = \max\{\gamma_1 \sigma_k, \sigma_{\min}\}$
	}
	\uElseIf(\tcp*[f]{successful step}){$\rho_k \geq \eta_1$}{
		Set $\xx_{k+1} = \xx_k + \ss_k$ and $\sigma_{k+1} = \sigma_k$
	}
	\uElseIf(\tcp*[f]{unsuccessful step}){$\rho_k \geq 0$}{
		Set $\xx_{k+1} = \xx_k$ and $\sigma_{k+1} = \gamma_2 \sigma_k$
	}
	\Else(\tcp*[f]{extremely unsuccessful step}){
		Set $\xx_{k+1} = \xx_k$ and determine $\sigma_{k+1}$ by interpolation
	}
	\caption{Interpolation update (general structure)}
	\label[procedure]{alg:interpolation_sigma_update}
\end{procedure}

From a theoretical perspective, whether the predicted function decrease is calculated using $m_k$ or $t_k$ decrease does not impact the
global complexity guarantees that can be provided for AR2\footnote{Note that the same complexity guarantees are obtained under slightly different assumptions on the subproblem solution. When measuring the predicted decrease using the regularized model, one needs to assume that $\ss_k$ has a smaller model value than the Cauchy point and that $\norm{\nabla^2 f(\xx_k)}$ stays bounded. When using the Taylor approximation, it suffices to assume that $\norm{\nabla m_k(\ss_k)} \leq \theta \norm{\ss_k}^2$ for some $\theta \geq 0$.} \cite{birgin2017worst,cartis2020concise,cartis2011adaptiveII}. However, for AR$p$ with $p \geq 3$, though  both  measures of decrease  still lead to convergent variants,  only the simple update \cref{alg:simple_sigma_update} has been shown to satisfy the optimal global complexity rate \cite{cartis2022evaluation}. The use of \cref{alg:interpolation_sigma_update} is therefore driven by practical performance considerations rather than theoretical complexity guarantees.

\paragraph{Simplifying design principles} The interpolation-based updates in the extremely successful and extremely unsuccessful cases are derived using the following simplifying principles, that guide the design of our technique.
\begin{enumerate}[(S1)]
	\item \emph{Consistent regularization}: An alternative value of the regularization parameter $\tilde{\sigma}_k$, that would have been a good choice for the current iteration, is also a good choice for the next iteration. \label{itm:simplification-same-sigma}
	\item \emph{Ray-based minimization}: Any minimizer $\tilde{\ss}_k$ for this counterfactual choice of $\tilde{\sigma}_k$ satisfies $\tilde{\ss}_k = \alpha \ss_k$ for some $\alpha > 0$. \label{itm:simplification-ray}
	\item \emph{Interpolation along the ray}: Along the ray $\{\alpha \ss_k \mid \alpha \geq 0\}$, the objective function is well-approximated by the interpolating polynomial $p_f$ defined below. \label{itm:simplification-interpolating-polynomial}
\end{enumerate}
\ref{itm:simplification-same-sigma} allows us to use the information already calculated about the regularized Taylor expansion and its minimizer at the current iteration for the next iteration, even though, in theory, there is no guarantee that the optimal values of $\sigma_k$ and $\sigma_{k+1}$ will be similar if the current iteration is successful.
\ref{itm:simplification-ray} implies that we only need to consider the relevant functions on the ray extending from the origin through $\ss_k$.
Let
\begin{equation}\label{eqn:t_m}
	t_{\rightarrow}(\alpha) = t_{\xx_k}^p (\alpha \ss_k / \norm{\ss_k}) \quad \text{and} \quad m_{\rightarrow, \sigma}(\alpha) = m_{\xx_k, \sigma}^p (\alpha \ss_k / \norm{\ss_k}) \quad \text{for } \alpha \in [0, \infty)
\end{equation}
be the current Taylor expansion and model along the ray pointing in the direction of $\ss_k$.
We note that $t_{\rightarrow}(\alpha)$ and $m_{\sigma,\rightarrow}(\alpha)$ are one-dimensional polynomials of order $p$ and $p+1$, respectively.
The interpolating polynomial $p_f(\alpha) \approx f(\xx_k + \alpha \ss_k / \norm{\ss_k})$ is defined as the unique polynomial of degree $p+1$ that satisfies
\begin{equation}
	p_f(0) = f(\xx_k), \quad p_f^{(j)}(0) = \nabla^j f(\xx_k) \big[\ss_k / \norm{\ss_k}\big]^j \quad \text{and} \quad p_f(\norm{\ss_k}) = f(\xx_k + \ss_k),
\end{equation}
incorporating all the information gathered about the objective function along the ray. It is straightforward to show that $p_f(\alpha)$ can be explicitly written as
\begin{equation}
	p_f(\alpha) = f(\xx_k) + \sum_{j=1}^p \frac{1}{j!} \nabla^j f(\xx_k) \big[\ss_k / \norm{\ss_k}\big]^j \alpha^j + \brack*{\frac{f(\xx_k + \ss_k) - t_k(\ss_k)}{\norm{\ss_k}^{p+1}}} \alpha^{p+1},
\end{equation}
with coefficients that are easy to compute.
\ref{itm:simplification-interpolating-polynomial} enables us to use $p_f$ instead of $f$ and avoid additional evaluations of the objective function.

%Let us now describe the concept behind the interpolation-based updates, beginning with the extremely unsuccessful case, which is the most straightforward.
\paragraph{Design of the regularization parameter updates}
In the \emph{extremely unsuccessful} case, since the step was unsuccessful, a different regularization parameter $\tilde{\sigma}_k$ should have been chosen such that the corresponding step $\tilde{\ss}_k$ would have been successful, satisfying
\begin{equation}
	\tilde{\rho}_k = \frac{f(\xx_k) - f(\xx_k + \tilde{\ss}_k)}{m_{\xx_k, \tilde{\sigma}_k}^p(\vek{0}) - m_{\xx_k, \tilde{\sigma}_k}^p (\tilde{\ss}_k)} \geq \eta_1.
\end{equation}
Using the simplifications described earlier, we therefore search for the smallest $\sigma > \sigma_k$ such that a minimizer $\alpha$ of $m_{\rightarrow, \sigma}$ satisfies
\begin{equation}\label{eqn:interpolation_unsuccessful_constraint}
	\frac{p_f(0) - p_f(\alpha)}{m_{\rightarrow, \sigma}(0) - m_{\rightarrow, \sigma}(\alpha)} \geq \eta_1.
\end{equation}

The \emph{extremely successful} case further splits into two scenarios.
Firstly, when $m_k (\ss_k) \geq f(\xx_k + \ss_k) \geq t_k (\ss_k)$, we aim to find the largest $\sigma < \sigma_k$ such that the difference between the regularized model and the objective function is reduced by a factor of $\beta \in (0, 1)$ at the minimizer $\alpha$.
This can be expressed as
\begin{equation}\label{eqn:interpolation_successful_constraint_fgeqt}
	m_{\rightarrow, \sigma}(\alpha) - p_f(\alpha) \leq \beta (m_{\rightarrow, \sigma_k}(\norm{\ss_k}) - p_f(\norm{\ss_k})) = \beta (m_k(\ss_k) - f(\xx_k + \ss_k)).
\end{equation}
In the second scenario, when $f(\xx_k + \ss_k) < t_k (\ss_k)$, the highest-order coefficient of $p_f$ is negative, meaning the polynomial is not lower bounded on $\alpha \in (0, \infty)$ and therefore not useful.
Then, instead of using $p_f$, we resort to the Taylor expansion and simply aim to decrease the difference between the regularized model and the Taylor expansion by the same factor $\beta$:
\begin{equation}\label{eqn:interpolation_successful_constraint_fltt}
	m_{\rightarrow, \sigma}(\alpha) - t_{\rightarrow}(\alpha) \leq \beta(m_{\rightarrow, \sigma_k}(\norm{\ss_k}) - t_{\rightarrow}(\norm{\ss_k})) = \beta (m_k(\ss_k) - t_k(\ss_k)).
\end{equation}
\paragraph{Calculating the updated regularization parameter} In each of the above scenarios, we establish a constraint for the new $\sigma$ and its corresponding minimizer $\alpha$ given by \cref{eqn:interpolation_unsuccessful_constraint}, \cref{eqn:interpolation_successful_constraint_fgeqt} or \cref{eqn:interpolation_successful_constraint_fltt}.
We will abbreviate this constraint as $c(\alpha, \sigma) \leq 0$.
By additionally replacing the constraint that $\alpha$ is a minimizer of $m_{\rightarrow, \sigma}$ with second-order necessary conditions\footnote{In \cite{gould2012updating}, it sufficed to demand stationarity of $\alpha$, since there would only ever be exactly one local minimizer $\alpha > 0$.} we arrive at the following optimization problems:
\begin{equation}\label{eqn:interpolation_optimization_formulation}
	\begin{aligned}
		\min_{\alpha, \sigma} &  & \sigma                                             \\
		\text{s.t.}           &  & m_{\rightarrow, \sigma}' (\alpha) & = 0           \\
		&  & m_{\rightarrow, \sigma}''(\alpha) & \geq 0        \\
		&  & c(\alpha, \sigma)    & \leq 0        \\
		&  & \sigma               & \geq 0        \\
		&  & \sigma               & \geq \sigma_k \\
		&  & \alpha               & > 0
	\end{aligned}
	\quad \quad \text{or} \quad \quad
	\begin{aligned}
		\max_{\alpha, \sigma} &  & \sigma                               \\
		\text{s.t.}           &  & m_{\rightarrow, \sigma}' (\alpha) & = 0           \\
		&  & m_{\rightarrow, \sigma}''(\alpha) & \geq 0        \\
		&  & c(\alpha, \sigma)    & \leq 0        \\
		&  & \sigma               & \geq 0        \\
		&  & \sigma               & \leq \sigma_k \\
		&  & \alpha               & > 0
	\end{aligned}
\end{equation}
In the unsuccessful case, we need to solve the problem on the left, and in the successful case, the one on the right;
we analyse them simultaneously.
The stationarity condition can be rearranged to express $\sigma$ as a function of $\alpha$:
\begin{equation}\label{eqn:sigma_formula_alpha}
	m_{\rightarrow, \sigma}'(\alpha) = t_{\rightarrow}'(\alpha) + \sigma \alpha^p = 0 \qquad \Leftrightarrow \qquad\sigma = \frac{-t_{\rightarrow}'(\alpha)}{\alpha^p}.
\end{equation}
Substituting $\sigma$ with the expression in \eqref{eqn:sigma_formula_alpha} (and multiplying the inequalities by $\alpha > 0$ as needed) leads to the following equivalent system, that only involves  polynomial inequalities in $\alpha$:
\begin{equation}\label{eqn:interpolation_optimization_formulation_alpha}
	\begin{aligned}
		\min_{\alpha} &  & -\frac{t_{\rightarrow}'(\alpha)}{\alpha^p}            \\
		\text{s.t.}   &  & t_{\rightarrow}''(\alpha)\alpha - p t_{\rightarrow}'(\alpha)                  & \geq 0 \\
		&  & c(\alpha)                                         & \leq 0 \\
		&  & t_{\rightarrow}'(\alpha)                                        & \leq 0 \\
		&  & t_{\rightarrow}'(\alpha) + \sigma_k \alpha^p & \leq 0 \\
		&  & \alpha                                            & > 0
	\end{aligned}
	\quad \text{or} \quad
	\begin{aligned}
		\max_{\alpha} &  & -\frac{t_{\rightarrow}'(\alpha)}{\alpha^p}            \\
		\text{s.t.}   &  & t_{\rightarrow}''(\alpha)\alpha - p t_{\rightarrow}'(\alpha)                  & \geq 0 \\
		&  & c(\alpha)                                         & \leq 0 \\
		&  & t_{\rightarrow}'(\alpha)                                        & \leq 0 \\
		&  & t_{\rightarrow}'(\alpha) + \sigma_k \alpha^p & \geq 0 \\
		&  & \alpha                                            & > 0.
	\end{aligned}
\end{equation}
The inequality $c(\alpha) = c(\alpha, -t_{\rightarrow}'(\alpha) / \alpha^p) \leq 0$ can also be expressed as a polynomial inequality since $m_{\rightarrow, \sigma}(\alpha) = t_{\rightarrow}(\alpha) - \frac{1}{p+1} t_{\rightarrow}'(\alpha)\alpha$. Therefore, we can rewrite \cref{eqn:interpolation_unsuccessful_constraint,eqn:interpolation_successful_constraint_fgeqt,eqn:interpolation_successful_constraint_fltt} as follows:
\begin{align}
	\eta_1 \paren*{t_{\rightarrow}(0) - t_{\rightarrow}(\alpha) + \frac{1}{p+1} t_{\rightarrow}'(\alpha)\alpha} - (p_f(0) - p_f(\alpha))      & \refleq{eqn:interpolation_unsuccessful_constraint} 0 \label{eqn:interpolation_unsuccessful_constraint_alpha}         \\
	t_{\rightarrow}(\alpha) - \frac{1}{p+1} t_{\rightarrow}'(\alpha)\alpha - p_f(\alpha) - \beta(m_k(\ss_k) - f(\xx_k + \ss_k)) & \refleq{eqn:interpolation_successful_constraint_fgeqt} 0 \label{eqn:interpolation_successful_constraint_fgeqt_alpha} \\
	\frac{1}{p+1} t_{\rightarrow}'(\alpha)\alpha - \beta (m_k(\ss_k) - t_k(\ss_k))                                & \refleq{eqn:interpolation_successful_constraint_fltt} 0 \label{eqn:interpolation_successful_constraint_fltt_alpha}
\end{align}

In all cases, the feasible set of $\alpha$ is described by a series of polynomial inequalities of order at most $p$.
Since the derivative of the objective in \cref{eqn:interpolation_optimization_formulation_alpha} is given by
\begin{equation}
	\frac{-t_{\rightarrow}''(\alpha) \alpha + p t_{\rightarrow}'(\alpha)}{\alpha^{p + 1}},
\end{equation}
the objective is monotonic over the feasible set, meaning that the optimal value will be attained at the boundary.
It is therefore sufficient to compute the roots of all polynomial left-hand sides in \cref{eqn:interpolation_optimization_formulation_alpha}, determine which of them satisfy all other inequalities, and then choose the one with the smallest or largest value of $\sigma = -t_{\rightarrow}'(\alpha) / \alpha^p$.
It is important to note that the strict inequality $\alpha >0$ is not troublesome, as for $t_{\rightarrow}'(\alpha) \neq 0$, the value of $\sigma$ converges to $\pm \infty$ as $\alpha$ approaches zero, making it either non-optimal or in violation of the bound on $\sigma$.
%In \cite{gould2012updating}, the authors analyse the inequalities in \cref{eqn:interpolation_optimization_formulation_alpha} in the context of a quadratic $t_{\rightarrow}$ and derive more explicit formulas.
%However, the fundamental process still involves solving polynomial equations and choosing the appropriate root $\alpha$ to compute $\sigma$ from.

Since the parameter $\alpha^*$ and corresponding $\sigma^*$ computed from solving \cref{eqn:interpolation_optimization_formulation_alpha} are based on simplifications, it is important to safeguard their outcomes.
Moreover, there may be instances where no feasible solutions to \cref{eqn:interpolation_optimization_formulation_alpha} exist, meaning that $\alpha^*$ and $\sigma^*$ cannot be determined. In such cases, a fallback procedure is necessary.
We adopt the approach in \cite{gould2012updating}, which involves enforcing a minimum and maximum increase in the extremely unsuccessful case and falling back to a simple decrease by a constant factor if $\alpha^* > \alpha_{\max} \norm{\ss_k}$ in the successful case.
The complete procedure is shown in \cref{alg:interpolation_sigma_update_full}.

\begin{procedure}
	\KwParameters{
		$0 < \eta_1 \leq \eta_2 < 1$,
		$0 < \gamma_{\min} < \gamma_1 < 1 < \gamma_2 < \gamma_{\max}$,
		$0 < \beta < 1$,
		$\alpha_{\max} > 1$,
		$\chi_{\min} > 0$,
		$\sigma_{\min} > 0$
	}
	\KwDefaults{
		$\eta_1=0.01$, $\eta_2=0.95$,
		$\gamma_{\min}=10^{-1}$, $\gamma_1=0.5$, $\gamma_2=3$, $\gamma_{\max}=10^{2}$,
		$\beta=10^{-2}$,
		$\alpha_{\max}=2$,
		$\chi_{\min}=10^{-8}$,
		$\sigma_{\min}=10^{-8}$
	}
	Compute the predicted function decrease $\deltapred_k = m_k(\vek{0}) - m_k(\ss_k)$\;
	Compute the actual function decrease $\deltaact_k = f(\xx_k) - f(\xx_k + \ss_k)$\;
	Compute the decrease ratio $\rho_k = \deltaact_k / \deltapred_k$\;
	\uIf(\tcp*[f]{extremely successful step}){$\rho_k \geq 1$}{
		Set $\xx_{k+1} = \xx_k + \ss_k$ \;
		Compute $\chi_k = m_k(\ss_k) - \max\{f(\xx_k + \ss_k), t_k(\ss_k)\}$ \;
		\uIf{$\chi_k \geq \chi_{\min}$}{
			\uIf{$f(\xx_k + \ss_k) \geq t_k(\ss_k)$}{
				Compute $(\alpha^*, \sigma^*)$ from \cref{eqn:interpolation_optimization_formulation_alpha}-max using \cref{eqn:interpolation_successful_constraint_fgeqt_alpha} as the constraint $c(\alpha)\leq 0$\;
			}
			\Else{
				Compute $(\alpha^*, \sigma^*)$ from \cref{eqn:interpolation_optimization_formulation_alpha}-max using \cref{eqn:interpolation_successful_constraint_fltt_alpha} as the constraint $c(\alpha)\leq 0$\;
			}
			Set $\sigma_{k+1} = \begin{cases}
				\max \{\sigma^*, \sigma_{\min}\}               & \text{if } (\alpha^*, \sigma^*) \text{ exists and } \alpha^* \leq \alpha_{\max} \norm{\ss_k} \\
				\max \{\gamma_{\min} \sigma_k, \sigma_{\min}\} & \text{otherwise}
			\end{cases}$\;
		}
		\Else{
			Set $\sigma_{k+1} = \max\{\gamma_1 \sigma_k, \sigma_{\min}\}$\;
		}
	}
	\uElseIf(\tcp*[f]{very successful step}){$\rho_k \geq \eta_2$}{
		Set $\xx_{k+1} = \xx_k + \ss_k$ and $\sigma_{k+1} = \max\{\gamma_1 \sigma_k, \sigma_{\min}\}$
	}
	\uElseIf(\tcp*[f]{successful step}){$\rho_k \geq \eta_1$}{
		Set $\xx_{k+1} = \xx_k + \ss_k$ and $\sigma_{k+1} = \sigma_k$
	}
	\uElseIf(\tcp*[f]{unsuccessful step}){$\rho_k \geq 0$}{
		Set $\xx_{k+1} = \xx_k$ and $\sigma_{k+1} = \gamma_2 \sigma_k$
	}
	\Else(\tcp*[f]{extremely unsuccessful step}){
		Set $\xx_{k+1} = \xx_k$\;
		Compute $(\alpha^*, \sigma^*)$ from \cref{eqn:interpolation_optimization_formulation_alpha}-min using \cref{eqn:interpolation_unsuccessful_constraint_alpha} as the constraint $c(\alpha)\leq 0$\;
		Set $\sigma_{k+1} = \begin{cases}
			\min\{\max\{\sigma^*, \gamma_2 \sigma_k\}, \gamma_{\max} \sigma_k\} & \text{if } (\alpha^*, \sigma^*) \text{ exists} \\
			\gamma_2 \sigma_k                                                   & \text{otherwise}
		\end{cases}$\;
	}
	\caption{Interpolation update}
	\label[procedure]{alg:interpolation_sigma_update_full}
\end{procedure}

\subsection{Numerical studies of the interpolation-based update}

For the new parameters in \cref{alg:interpolation_sigma_update_full}, we use the default values suggested in \cite{gould2012updating}, except for $\gamma_1$ and $\gamma_2$.
The increase and decrease of $\sigma_k$ when $\rho_k \in (0, 1)$ are kept consistent between the simple and the interpolation-based update to ensure a fair comparison.

\begin{figure}
	\centering
	\subimport{plots/interpolation}{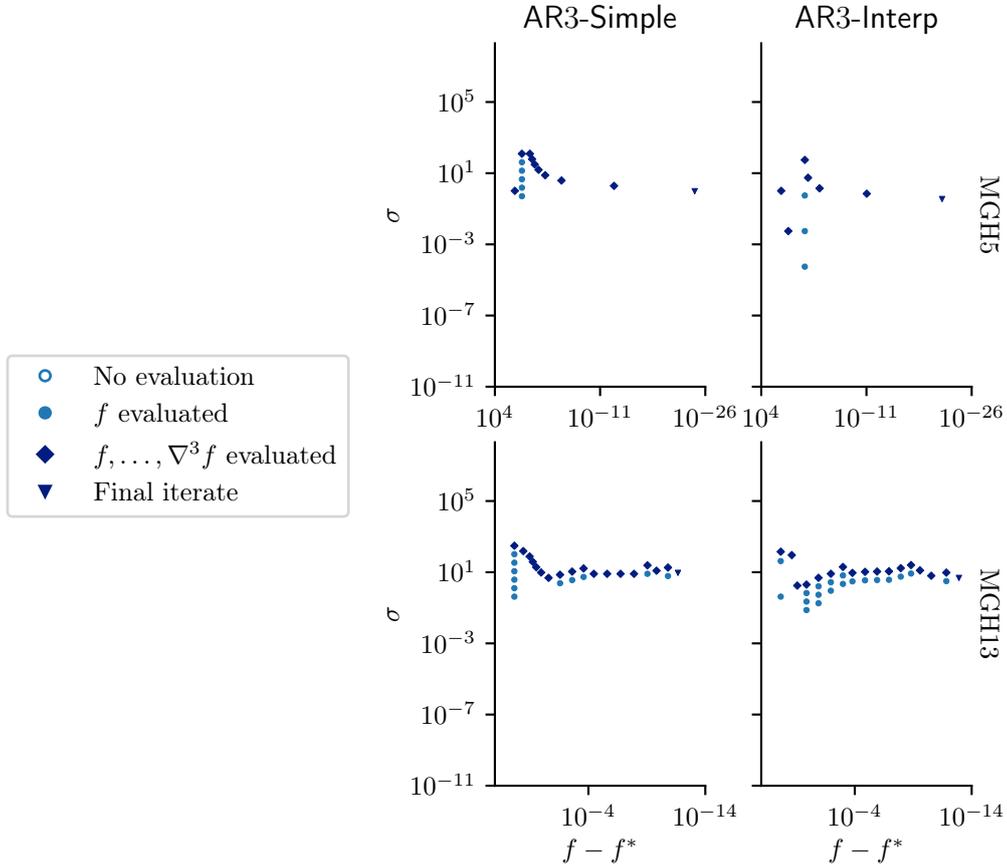}
	\caption{
    The convergence dot plot illustrates  the interpolation-based strategy.In the experiments the subproblem is solved to a high accuracy using \cref{eqn:subproblem_absolute_tc} with $\eps_{\textrm{sub}} = 10^{-9}$ and $\sigma_0$ is chosen according to \cref{eqn:sigma0_taylor}.
    As expected, \textsf{AR$3$-Interp} changes the regularization parameter more aggressively in many cases.
    Compared to \textsf{AR$3$-Simple}, this strategy enhances performance for MGH5 but causes additional unsuccessful steps for MGH13 by decreasing $\sigma_k$ too rapidly.
    See \cref{sec:convergence_dot} for details on the convergence dot plots.
	}
	\label{fig:interpolation-convergence}
\end{figure}

\begin{sidewaysfigure}
	\centering
	\subimport{plots/interpolation}{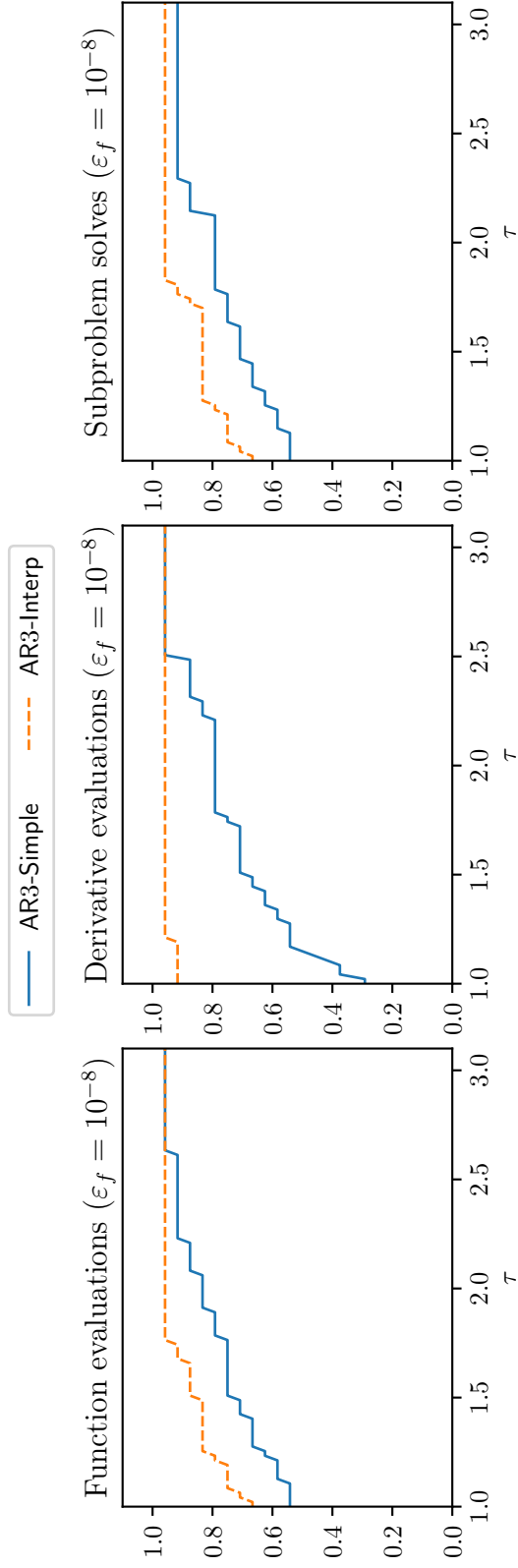}
	\caption{
    The performance profile illustrates  the interpolation-based strategy.
    In the experiments, the subproblem is solved to  high accuracy using \cref{eqn:subproblem_absolute_tc} with $\eps_{\textrm{sub}} = 10^{-9}$ and $\sigma_0$ is chosen according to \cref{eqn:sigma0_taylor}.
    On our selection of test problems, \textsf{AR$3$-Interp} performs significantly better than \textsf{AR$3$-Simple}, especially on the number of derivative evaluations.
    See \cref{sec:performance_profile} for details on the performance profile plots.
	}
	\label{fig:interpolation-performance-profile}
\end{sidewaysfigure}

\Cref{fig:interpolation-convergence} presents the convergence dot plots for \textsf{AR$3$-Simple}, and the interpolation-based update strategy  labelled \textsf{AR$3$-Interp} (\cref{alg:arp_framework} combined with \cref{alg:interpolation_sigma_update_full}).
For MGH5,  \textsf{AR$3$-Interp} significantly reduces the overall number of iterations as well as the number of function and derivative evaluations by adaptively adjusting $\sigma_k$ much more aggressively than \textsf{AR$3$-Simple}.
However, MGH13 illustrates that there are cases where the interpolation-based update is less efficient than the simple one because it decreases $\sigma_k$ too quickly.
In these cases, while the number of derivative evaluations decreases, the number of function evaluations increases due to the smaller values of $\sigma_k$ leading to more unsuccessful iterations.

To assess the overall impact of switching from \textsf{AR$3$-Simple} to \textsf{AR$3$-Interp}, we examine the performance profile plots in \cref{fig:interpolation-performance-profile}.
These curves suggest that, on average, the new updating strategy reduces the number of function evaluations, derivative evaluation and subproblem solves.
The most striking improvement is seen in the number of derivative evaluations, or equivalently the number of successful iterations, where \textsf{AR$3$-Interp} outperforms \textsf{AR$3$-Simple} on almost all test problems.

\section{A pre-rejection framework}
\label{sec:prerejection}

The performance of \cref{alg:arp_framework} can be further enhanced by avoiding function evaluations when a step is unlikely to decrease the objective function.
In this section, we introduce a novel pre-rejection mechanism that rejects steps based on the Taylor expansion along the step.
This mechanism is motivated by the fundamentally different behaviour of minimizers in the AR$3$ subproblem when $\sigma$ changes, compared to minimizers in the AR$2$ subproblem. The following subsections will discuss this difference, define various types of local minimizers, provide conditions to characterize these minimizers in the one-dimensional case, and propose a new pre-rejection heuristic based on this analysis.

\subsection{\texorpdfstring{Discontinuity in the AR$p$ subproblem solutions}{Discontinuity in the ARp subproblem solutions}}
\label{sec:sigma-discontinuity}

As $p$ increases, the AR$p$ subproblem of minimizing the regularized model in \cref{eqn:subproblem} becomes increasingly challenging to solve.
For $p=1$ we are searching for the minimum of a quadratically regularized linear model.
The solution in this case is always a scaled negative gradient, and the scaling factor can be easily computed from the regularization parameter.

For $p=2$, the local model can become non-convex and may have multiple local minima. However, it is still possible to solve the optimization problem due to the characterization of its global minimum \cite[Theorem 8.2.8]{cartis2022evaluation}.
Moreover, this global minimizer is guaranteed to be a descent direction, that is, it satisfies $\nabla f(\xx_k)^\T \ss_k < 0 $. Given this, solvers for the AR$2$ subproblem typically aim to find the global minimizer \cite{cartis2022evaluation}.
%, which can be efficiently computed because the structure of the global minimizer is similar to that of the trust-region subproblem.

For $p \geq 3$,
such nice properties that hold for lower-order methods no longer apply. The AR$p$ subproblem becomes significantly more challenging to solve, there is no complete characterization for any of its minimizers, and the global minimizer does not even have to be a descent direction (see \cref{thm:ar3_min_not_descent}).

\begin{example}[The global AR3 model minimizer is not a descent direction]\label{thm:ar3_min_not_descent}
	Consider the one-dimensional Taylor expansion $t(s) = s^3 + s^2 - s$.
	Clearly, any positive $s$ is a descent direction.
	At the same time, $\min_{s \geq 0} t(s) = t(\frac{1}{3}) = -\frac{5}{27}$ and $\lim_{s \to -\infty} t(s) = -\infty$.
	Therefore, for sufficiently small $\sigma$, the global minimizer of the model is negative and not a descent direction.
\end{example}

Notably, both optimal complexity guarantees \cite{cartis2022evaluation} and local convergence results \cite{doikov2022local} do not impose restrictions on which minimizer should be chosen.
This is because when $\sigma$ is sufficiently large, any local or global minimizer of the model will provide sufficient decrease in the objective, and the updates will be accepted.
From an efficiency standpoint, it is preferable to maintain $\sigma$ small, to allow for larger steps.
However, when the third derivative of $f$ at $\xx_k$ is nonzero, the third-order Taylor expansion is always nonconvex and does not have a lower bound. In the case of small $\sigma$, the global minimizer of the model is a large step in the negative tensor direction regardless of the local behaviour of the model.
This indicates that understanding the minimizers of the model and their dependence on $\sigma$ is important for an efficient implementation of AR$3$.

\begin{figure}
	\centering
	\includegraphics{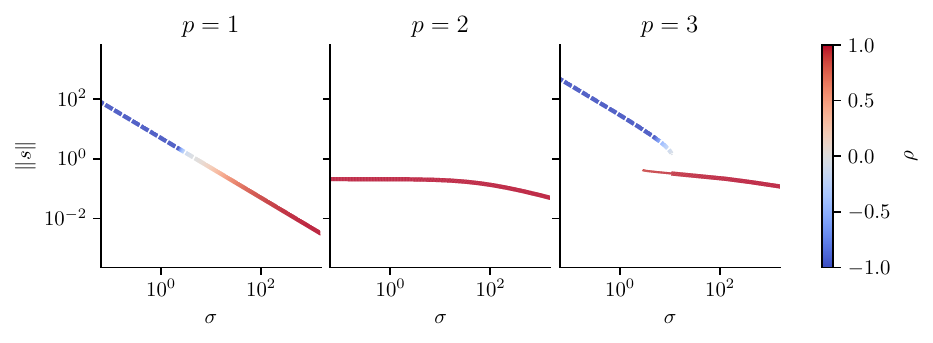}
	\caption{The graph shows how the absolute values of the local minimizers $s_*$ of the regularized model $m_{\sigma}(s)$ depend on the regularization parameter $\sigma$ for different orders $p$.
    The models are constructed at $x = 0$ for $f(x) = 3x^4 - 10x^3 + 12x^2 - 5x$. Non-global minimizers are drawn using thin lines. The values of $\rho$ are capped to $[-1, 1]$ and negative values are drawn using a dashed line.}\label{fig:discontinuity}
\end{figure}

\Cref{fig:discontinuity} illustrates how the local minimizers of AR$p$ subproblems for $p \in \{1, 2, 3\}$ change as $\sigma$ changes for one specific objective function and expansion point.
The color of the lines corresponds to $\rho$, the success measure employed in the simple update (\cref{alg:simple_sigma_update}).
In the case of $p=3$, for this example, no minimizer changes continuously as $\sigma$ increases from zero to infinity.
Instead, there are two distinct branches: one branch corresponds to a local minimizer that exists only for small values of $\sigma$, and the other corresponds to a local minimizer that exists only for large values of $\sigma$, with some overlap in the middle.
During this overlap, the global minimizer of the model switches from one branch to the other.
As indicated by the color, the local minimizers on the first branch would result in unsuccessful steps ($\rho \leq 0$), whereas the minimizers on the second branch yield successful steps ($\rho$ close to $1$).
This is an example of a new phenomenon for AR$p$ minimizers that only occurs for $p \geq 3$: a discontinuity in the minimizer curves as $\sigma$ changes.
The function $f$ is carefully chosen such that the third-order Taylor expansion is strictly monotonically decreasing but becomes very flat around the minimizer of $f$.
This leads to the behaviour seen in \cref{fig:discontinuity}, where initially there is only a single stationary point. However, once $\sigma$ becomes large enough, a local maximum forms near the flat part of the Taylor expansion, leading to two minimizers.
As $\sigma$ increases further, the local maximum merges with the \enquote*{original minimizer}, becomes a saddle point, and eventually vanishes.
See \cref{fig:prerejection_illustration_AR3_func0} for a graph of the function and the models for different values of~$\sigma$.

\subsection{Persistent and transient model minimizers}
We introduce some definitions that help describe the discontinuity phenomenon shown in \cref{fig:discontinuity}.
To simplify the notation, we write $t \colon \real^d \to \R$ for the $p$th-order polynomial representing the Taylor expansion and $m_{t, \sigma}(\ss) = t(\ss) + \frac{\sigma}{p+1} \norm{\ss}^{p+1}$ for the corresponding regularized model with regularization parameter $\sigma$.

\begin{definition}[Regularized minimizer]
    A \emph{regularized minimizer} of $t$ is a point $\ss \in \R^d$ that is a local minimizer of the regularized model $m_{t, \sigma}$ for some $\sigma > 0$.
\end{definition}

\begin{definition}[Minimizer curve]
	Let $I \subset \real_{\geq 0}$ be an interval. A \emph{minimizer curve} of $t$ is a continuous function $\vek{\psi} \colon I \to \real^d$ such that $\vek{\psi}(\sigma)$ is a local minimizer of $m_{t, \sigma}$ for every $\sigma \in I$.
    The curve is called \emph{maximal} if $\vek{\psi}$ cannot be extended to any minimizer curve on a strictly larger interval than $I$.
    By definition the image of any minimizer curve of $t$ is contained in the set of regularized minimizers of $t$ and every regularized minimizer of $t$ lies on a maximal minimizer curve of $t$.
\end{definition}

\begin{definition}[Persistent/transient minimizer]
	Let $\vek{\psi} \colon I \to \real^d$ be a maximal minimizer curve of $t$. We say
	\begin{enumerate}
		\item $\vek{\psi}$ is \emph{$c$-persistent} for $c \geq 0$ if the minimizer curve persists as $\sigma$ increases from $c$ to $\infty$, i.e., if $(c, \infty) \subset I$.
		\item $\vek{\psi}$ is \emph{persistent} if it is $c$-persistent for some $c \geq 0$.
        \item $\vek{\psi}$ is \emph{globally persistent} if it is $0$-persistent.
		\item $\vek{\psi}$ is \emph{transient} if it is not persistent.
		\item $\vek{\psi}$ is \emph{bounded} if $\sup_{\sigma \in I} \norm{\vek{\psi}(\sigma)} < \infty$.
	\end{enumerate}
	A regularized minimizer of $t$ is persistent if it lies on a persistent minimizer curve, otherwise it is transient.
	\label{def:persistent_transient_minimizers}
\end{definition}

Based on the definitions introduced above, we obtain the following theorems.
\begin{theorem}\label{thm:persistent_minimizers_local}
	Let $\vek{\psi} \colon I \to \R^d$ be a maximal minimizer curve of a Taylor expansion $t \colon \R^d \to \R$ with $\nabla t(\vek{0}) \neq \vek{0}$. Then the following statements are equivalent:
	\begin{enumerate}
		\item $\vek{\psi}$ is persistent
		\item $\vek{\psi}$ is persistent and $\lim_{\sigma \to \infty} \vek{\psi}(\sigma) = \vek{0}$
		\item $\inf_{\sigma \in I} \norm{\vek{\psi}(\sigma)} = 0$
	\end{enumerate}
\end{theorem}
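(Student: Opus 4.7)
The plan is to close the cycle via (2) $\Rightarrow$ (1) and (2) $\Rightarrow$ (3), which are both immediate from the definitions, together with the two nontrivial directions (1) $\Rightarrow$ (2) and (3) $\Rightarrow$ (1). The common starting point for both nontrivial directions is the first-order stationarity condition $\nabla m_{t,\sigma}(\vek{\psi}(\sigma)) = \vek{0}$, which along the minimizer curve reads
\begin{equation}
	\nabla t(\vek{\psi}(\sigma)) + \sigma \norm{\vek{\psi}(\sigma)}^{p-1}\,\vek{\psi}(\sigma) = \vek{0},
	\label{eqn:sketch-stationarity}
\end{equation}
and, after taking norms, yields the scalar identity $\sigma \norm{\vek{\psi}(\sigma)}^p = \norm{\nabla t(\vek{\psi}(\sigma))}$ that tightly couples $\sigma$ to the size of the minimizer.

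For (1) $\Rightarrow$ (2), I would use that $t$ is a polynomial of degree at most $p$, so $\nabla t$ has degree at most $p-1$ and admits a uniform estimate $\norm{\nabla t(\ss)} \leq C(1 + \norm{\ss}^{p-1})$ for some constant $C > 0$. Substituting into the scalar identity and writing $r(\sigma) = \norm{\vek{\psi}(\sigma)}$ gives $\sigma\, r(\sigma)^p \leq C(1 + r(\sigma)^{p-1})$. A short case split on whether $r(\sigma) \leq 1$ or $r(\sigma) > 1$ yields $r(\sigma) \leq \max\{(2C/\sigma)^{1/p},\, 2C/\sigma\}$; persistence of $\vek{\psi}$ guarantees that $(c,\infty) \subset I$ for some $c \geq 0$, so letting $\sigma \to \infty$ in this bound forces $\vek{\psi}(\sigma) \to \vek{0}$.

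For (3) $\Rightarrow$ (1), I would pick a sequence $\sigma_n \in I$ with $\vek{\psi}(\sigma_n) \to \vek{0}$. Continuity of $\nabla t$ gives $\nabla t(\vek{\psi}(\sigma_n)) \to \nabla t(\vek{0})$, which is nonzero by hypothesis, so the scalar identity forces $\sigma_n \norm{\vek{\psi}(\sigma_n)}^p \to \norm{\nabla t(\vek{0})} > 0$ and therefore $\sigma_n \to \infty$. Since $I$ is an interval in $\real_{\geq 0}$ with $\sup I = \infty$, it follows that $(\inf I, \infty) \subset I$, so $\vek{\psi}$ is persistent. The conceptual obstacle in both nontrivial directions is essentially the same: ruling out the pathological regime in which $\sigma\, r(\sigma)^p$ neither blows up nor vanishes. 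The polynomial degree bound on $\nabla t$ forbids this for a persistent curve as $\sigma \to \infty$, while the standing assumption $\nabla t(\vek{0}) \neq \vek{0}$ forbids it for any sequence along which $r(\sigma) \to 0$; it is this pair of facts, applied to \eqref{eqn:sketch-stationarity}, that makes the three conditions equivalent.
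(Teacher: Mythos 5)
Your proposal is correct and follows essentially the same route as the paper's proof: both hinge on the stationarity identity $\sigma\norm{\vek{\psi}(\sigma)}^p = \norm{\nabla t(\vek{\psi}(\sigma))}$, use the degree bound on the polynomial $\nabla t$ to force $\vek{\psi}(\sigma)\to\vek{0}$ along a persistent curve, and use continuity together with $\nabla t(\vek{0})\neq\vek{0}$ to force $\sigma_n\to\infty$ along any sequence with $\vek{\psi}(\sigma_n)\to\vek{0}$. The only cosmetic difference is your packaging of the growth bound as $C(1+\norm{\ss}^{p-1})$ with a case split versus the paper's two-constant bound $\max\{C_1\norm{\ss}^{-1}, C_2\norm{\ss}^{-p}\}$.
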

\begin{proof}
    See \cref{sec:thm_prf_persistent_minimizers_local} for the proof.
\end{proof}

\begin{theorem}\label{thm:ar2_global_min_persistent}
	Let $t \colon \R^n \to \R$ be a quadratic polynomial with $\nabla t(\vek{0}) \neq \vek{0}$, and let $m_{\sigma}(\ss) = t(\ss) + \frac{\sigma}{3}\norm{\ss}^3$ be the corresponding regularized model.
	Then every global minimizer of $m_{\sigma}$ for any $\sigma \geq 0$ is a persistent regularized minimizer of $t$.
\end{theorem}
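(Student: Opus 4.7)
The plan is to construct explicitly a persistent minimizer curve through the given global minimizer $\ss^*$, so that persistence follows directly from \cref{def:persistent_transient_minimizers}. I abbreviate $H := \nabla^2 t(\vek{0})$ and $g := \nabla t(\vek{0})$ (so $g \neq \vek{0}$ by hypothesis). The key external ingredient is the classical characterization of global minimizers of the cubic-regularized quadratic model (e.g., Theorem~8.2.8 in \cite{cartis2022evaluation}): $\ss$ is a global minimizer of $m_\sigma$ if and only if $(H + \lambda \mathbf{I})\ss = -g$ and $H + \lambda \mathbf{I} \succeq \vek{0}$, where $\lambda := \sigma \|\ss\|$. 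I will apply this in both directions: as a necessary condition to describe the given $\ss^*$ of $m_{\sigma_0}$, and as a sufficient condition to certify that each point on the curve I construct is a global minimizer of the appropriate regularized model.

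Set $\lambda^* := \sigma_0 \|\ss^*\|$ and parametrize the \emph{easy branch} by $\lambda$ in $J := (\max\{-\lambda_{\min}(H), 0\}, \infty)$ via $\ss(\lambda) := -(H + \lambda \mathbf{I})^{-1} g$ and $\sigma(\lambda) := \lambda/\|\ss(\lambda)\|$. On $J$ the matrix $H + \lambda \mathbf{I}$ is positive definite with eigenvalues strictly increasing in $\lambda$, so a short calculation (diagonalising $H$) gives that $\|\ss(\lambda)\|$ is strictly decreasing to $0$, hence $\sigma(\lambda)$ is strictly increasing to $\infty$. Inverting produces a continuous minimizer curve $\vek{\psi}_{\text{easy}} \colon (\sigma_1, \infty) \to \real^d$ of global minimizers of $m_\sigma$, with $\sigma_1 := \inf_{\lambda \in J} \sigma(\lambda)$. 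In the \emph{easy case} $H + \lambda^* \mathbf{I}$ is invertible; then $\lambda^* \in J$ and the characterization forces $\ss^* = -(H + \lambda^* \mathbf{I})^{-1} g$, so $\ss^*$ lies on $\vek{\psi}_{\text{easy}}$ and is persistent.

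The remaining work is the \emph{hard case}, in which $H + \lambda^* \mathbf{I}$ is singular (so $\lambda^* = -\lambda_{\min}(H)$) and $\ss^*$ is not uniquely determined: it decomposes as $\ss^* = -(H + \lambda^*\mathbf{I})^\dagger g + \vv^*$ with $\vv^* \in \ker(H + \lambda^*\mathbf{I})$. Since range and kernel of the symmetric matrix $H + \lambda^*\mathbf{I}$ are orthogonal, $\|\ss^*\|^2 = \|(H+\lambda^*\mathbf{I})^\dagger g\|^2 + \|\vv^*\|^2$, which also forces $\sigma_0 \leq \sigma_1$. To close the gap I define, for $\sigma \in [\sigma_0, \sigma_1]$,
\[
\vek{\psi}(\sigma) := -(H + \lambda^* \mathbf{I})^\dagger g + \tau(\sigma)\, \vv^*,
\]
with $\tau(\sigma) \in [0,1]$ chosen as the unique non-negative scalar satisfying $\sigma \|\vek{\psi}(\sigma)\| = \lambda^*$ (uniqueness and continuity of $\tau$ follow from the same orthogonality). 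Then $\vek{\psi}(\sigma_0) = \ss^*$, $\vek{\psi}(\sigma_1)$ matches the left-hand limit of $\vek{\psi}_{\text{easy}}$, and at every $\sigma$ on the segment the pair $(\vek{\psi}(\sigma), \lambda^*)$ satisfies both parts of the characterization, so $\vek{\psi}(\sigma)$ is a global minimizer of $m_\sigma$. Concatenating with $\vek{\psi}_{\text{easy}}$ gives a continuous minimizer curve on $[\sigma_0, \infty)$ consisting entirely of global minimizers of $m_\sigma$, whose maximal extension is persistent and contains $\ss^*$. The main obstacle is precisely this hard case: the $\lambda$-parametrization is not locally invertible at $\lambda^*$, and the non-uniqueness of $\ss^*$ along $\ker(H + \lambda^*\mathbf{I})$ must be absorbed by moving inside the kernel at fixed $\lambda$, where orthogonality is essential for building a continuous bridge to $\vek{\psi}_{\text{easy}}$.
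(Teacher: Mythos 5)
Your proof is correct and follows essentially the same route as the paper's: both invoke the global-minimizer characterization of the cubic-regularized quadratic model, parametrize the nonsingular branch by the multiplier $\lambda$, and in the hard case bridge to that branch by sliding the kernel component of $\ss^*$ (your $\tau(\sigma)$ is just a reparametrization of the paper's $\ss_2(\alpha)=\alpha\vv+\ww$). The only differences are cosmetic (diagonalizing $H$ versus citing the monotonicity of $\|\ss(\lambda)\|^{-1}$, and a harmless boundary imprecision about $\lambda^*=0$ when $H\succ 0$), so no further comment is needed.
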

\begin{proof}
    See \cref{sec:thm_prf_ar2_global_min_persistent} for the proof.
\end{proof}

The fundamental difference between persistent and transient regularized minimizers lies in their relationship with the local behaviour of the Taylor expansion  $t$.
Specifically, we know from \cref{thm:persistent_minimizers_local} that $\lim_{\sigma \to \infty} \vek{\psi}(\sigma) = \vek{0}$ holds for any persistent minimizer curve. In contrast, for any transient curve, $\norm{\vek{\psi}(\sigma)}$ is bounded away from zero.
Thus, transient minimizers disappear when $\sigma$ becomes sufficiently large, whereas persistent minimizers follow the local descent path of the objective function.
With this in mind, our proposed pre-rejection heuristic considers all transient minimizers as not useful and rejects them without evaluating the objective function.

To illustrate \cref{def:persistent_transient_minimizers}, we describe the different cases shown in \cref{fig:discontinuity} using these definitions.
For $p=1$, there is a single maximal minimizer curve, which is persistent but not bounded.
This contrasts with the maximal minimizer curve for $p=2$, which is both persistent and bounded.
In the $p=3$ case, there are two maximal minimizer curves: one is persistent, and the other is transient.\footnote{The persistent minimizer curve is defined on $(c_1, \infty)$ with $c_1 = \frac{176}{25} - \frac{28 \sqrt{14}}{25} \approx 2.849$ and the transient one on $(0, c_2)$ with $c_2 = \frac{176}{25} + \frac{28 \sqrt{14}}{25} \approx 11.231$.}

The example in \cref{fig:discontinuity} is specifically chosen to show that for AR$3$ transient minimizers exist, but it is worth mentioning that the need to deal with transient minimizers only arises for $p \geq 3$.
For the AR$1$ subproblem of minimizing $\nabla f(\xx_k)^\T \ss + \frac{\sigma}{2}\norm{\ss}^2$, as alluded to earlier, there is only a single local and global minimizer for any $\sigma$ and it takes the form $\ss = - \sigma^{-1} \nabla f(\xx_k)$ which is continuous in $\sigma$.
Therefore, there is always a single maximal minimizer curve and it is always globally persistent.
In the AR$2$ subproblem, there can be more than one maximal minimizer curve, and some of these curves can indeed be transient, but crucially any global minimizer of the model will always be a persistent one, ensuring the existence of a globally persistent minimizer curve (see \cref{thm:ar2_global_min_persistent}).
Because solvers have been developed that can compute a global minimum of the AR$2$ subproblem (e.g. Algorithm 6.1 in \cite{cartis2011adaptiveI}), the presence of transient minimizers does not impact the outer loop of the algorithm.
The situation changes for $p=3$.
As shown in \cref{fig:discontinuity}, there are instances when no minimizer curve is globally persistent. In other words, as long as $\sigma$ is sufficiently small, any subproblem solver will find a transient minimizer.

\subsection{Detecting transient minimizers in one dimension}\label{sec:transient_minimizer_detection}

According to \cref{def:persistent_transient_minimizers}, determining whether a given minimizer of $m_{t, \sigma}$ is transient would require following its associated minimizer curve as $\sigma$ increases to infinity to see whether it eventually disappears.
Although continuation methods could potentially achieve this for arbitrary dimension $d$, such an approach would be computationally expensive, and it would be unclear at what point to stop the process and determine that the minimizer is persistent.
Fortunately, in the one-dimensional case, we can determine whether a given point is a persistent regularized minimizer of $t$ or not (\cref{thm:one_dimensional_persistent_minimizers} and \cref{rem:simple_root}).
In this scenario, it is possible to compute an interval such that all points within the interval are persistent regularized minimizers, and all points outside the interval are either not regularized minimizers or are transient regularized minimizers. Moreover, whenever this interval is bounded, the unique persistent minimizer curve is also bounded, and vice versa.
This insight will serve as the foundation for a rejection heuristic introduced in \cref{sec:transient_minimizer_rejection}.

\begin{theorem}\label{thm:one_dimensional_persistent_minimizers}
	Let $t$ be a univariate polynomial of degree $p$ such that\footnote{
    % The analysis of $t_{\ss}'(0) \geq 0$ will be omitted as a result of $\ss$ being immediately transient.
	The case of $t'(0) > 0$ follows similarly as explained in \cref{rem:negative_derivative_t}.
	} $t'(0) < 0$ and define
	\begin{equation}
		\mathcal{R}_{+} = \{ \alpha > 0 \mid t'(\alpha) = 0 \text{ or } t''(\alpha)\alpha - p t'(\alpha) = 0 \}.
	\end{equation}
	If $\mathcal{R}_+$ is empty, then the persistent regularized minimizers of $t$ are exactly $\alpha \in (0, \infty)$.
	If $\mathcal{R}_+$ is non-empty and its smallest entry $\bar{\alpha}$ is a simple root of either polynomial, then all persistent regularized minimizers of $t$ are contained in $(0, \bar{\alpha}]$, and all points in $(0, \bar{\alpha})$ are persistent regularized minimizers of $t$.
\end{theorem}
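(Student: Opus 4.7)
The plan is to parametrize positive stationary points of $m_\sigma$ via $\sigma(\alpha) = -t'(\alpha)/\alpha^p$ and recognise that the polynomials defining $\mathcal{R}_+$ are exactly the critical $\alpha$-values at which either $\sigma$ changes sign or the second-order condition transitions. A short computation gives
$$\sigma'(\alpha) = -\frac{t''(\alpha)\alpha - p\,t'(\alpha)}{\alpha^{p+1}} = -\frac{q(\alpha)}{\alpha^{p+1}},$$
where $q(\alpha) := t''(\alpha)\alpha - p\,t'(\alpha)$. Along the stationary curve the second-order condition reduces to $m''_\sigma(\alpha) = q(\alpha)/\alpha$, so $m''_\sigma(\alpha) \geq 0$ is equivalent to $\sigma'(\alpha) \leq 0$. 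Using $t'(0) < 0$ and $q(0) = -p\,t'(0) > 0$, together with the fact that $\bar\alpha$ is the smallest positive zero of either polynomial, I would conclude that $t' < 0$ and $q > 0$ throughout $(0, \bar\alpha)$, and throughout $(0, \infty)$ when $\mathcal{R}_+ = \emptyset$.

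For the forward inclusion, each $\alpha \in (0, \bar\alpha)$ paired with $\sigma(\alpha) > 0$ makes $\alpha$ a strict local minimum of $m_{\sigma(\alpha)}$, hence a regularized minimizer. The map $\sigma(\cdot)$ is a strictly decreasing smooth bijection from $(0, \bar\alpha)$ onto $(\sigma(\bar\alpha), \infty)$ with $\sigma(\alpha) \to \infty$ as $\alpha \to 0^+$, so its inverse is a continuous minimizer curve $\vek{\psi}$ satisfying $\vek{\psi}(\sigma) \to 0$ as $\sigma \to \infty$. Then \cref{thm:persistent_minimizers_local} certifies that $\vek{\psi}$ is persistent, so every $\alpha \in (0, \bar\alpha)$ is a persistent regularized minimizer; the same argument with $\bar\alpha$ replaced by $\infty$ covers the case $\mathcal{R}_+ = \emptyset$.

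For the reverse inclusion, let $\vek{\psi}\colon I \to \R_{>0}$ be a persistent minimizer curve and suppose its image meets $(\bar\alpha, \infty)$. Since $\vek{\psi}(\sigma) \to 0$ by \cref{thm:persistent_minimizers_local}, the intermediate value theorem yields some $\sigma_0 \in I$ with $\vek{\psi}(\sigma_0) = \bar\alpha$, and stationarity forces $\sigma_0 = -t'(\bar\alpha)/\bar\alpha^p$. If $\bar\alpha$ is a simple root of $t'$, then $t'(\bar\alpha) = 0$ and $t''(\bar\alpha) \neq 0$, so $\sigma_0 = 0$; this contradicts $\sigma_0 > \sigma_1 \geq 0$ for any $\sigma_1 \in I$ chosen with $\vek{\psi}(\sigma_1) > \bar\alpha$. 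If instead $\bar\alpha$ is a simple root of $q$ with $t'(\bar\alpha) < 0$, then $q'(\bar\alpha) \neq 0$ and $\sigma(\cdot)$ attains a strict local minimum at $\bar\alpha$, so the stationary curve locally opens parabolically to the right of $(\sigma_0, \bar\alpha)$, and the region $\{q \geq 0\}$ on which a stationary point qualifies as a local minimum of $m_\sigma$ contains only the left branch $\alpha < \bar\alpha$. Therefore $\vek{\psi}$ cannot continuously cross $\bar\alpha$ into $(\bar\alpha, \infty)$, yielding the required contradiction.

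The principal technical obstacle is the simple-root case on $q$: one has to convert this algebraic hypothesis into a precise geometric picture of the stationary curve together with the second-order region, in order to rule out that a minimizer curve touches $\bar\alpha$ and continues past it through the degenerate critical point there. Once that local analysis is set up cleanly, both cases give the desired containment in $(0, \bar\alpha]$, which, combined with the forward inclusion, completes the proof.
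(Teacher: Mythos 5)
Your proposal follows essentially the same route as the paper's proof: the same parametrization $\sigma(\alpha) = -t'(\alpha)/\alpha^p$ of stationary points, the same two polynomial inequalities $-t'(\alpha)\ge 0$ and $t''(\alpha)\alpha - pt'(\alpha)\ge 0$, the same appeal to \cref{thm:persistent_minimizers_local} to equate persistency with a continuous connection to $0$, and the same strict-monotonicity-of-$\sigma(\alpha)$ argument for the forward inclusion; your reverse inclusion via IVT plus a sign-change case analysis at $\bar\alpha$ is just a reorganization of the paper's observation that the simple root forces a gap of non-minimizers on $(\bar\alpha, \bar\alpha+\beta)$. The one small omission is that you never rule out \emph{negative} persistent regularized minimizers: to conclude containment in $(0,\bar\alpha]$ one must also note (as the paper does) that for $\alpha<0$ near $0$ the stationarity condition $t'(\alpha)+\sigma\abs{\alpha}^{p-1}\alpha=0$ has no solution with $\sigma\ge 0$ because $t'(0)<0$, so no persistent curve can approach $0$ from the left.
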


\begin{proof}
	For any tuple $(\alpha, \sigma) \in \R \times \R_{\geq 0}$ such that $\alpha$ is a local minimizer of $m_{t, \sigma}$, the second-order necessary conditions imply
	\begin{subequations}\label{eqn:alpha_necessary_m}
		\begin{align}
			t'(\alpha) + \sigma \abs{\alpha}^{p-1} \alpha = m_{\sigma}'(\alpha) & = 0 \label{eqn:alpha_first_order_m}            \\
			t''(\alpha) + p \sigma \abs{\alpha}^{p-1} = m_{\sigma}''(\alpha)    & \geq 0 \label{eqn:alpha_second_order_m}        \\
			\sigma                                                              & \geq 0. \label{eqn:nonnegative_regularization}
		\end{align}
	\end{subequations}
	Since $t'(0) < 0$, we know that $\alpha \neq 0$, and we can rewrite \cref{eqn:alpha_first_order_m} as follows:
	\begin{align}\label{eqn:regularization_first_order}
		\sigma = \frac{-t'(\alpha)}{\abs{\alpha}^{p-1} \alpha}.
	\end{align}
	By combining \cref{eqn:regularization_first_order,eqn:alpha_necessary_m}, we get the following system, which is equivalent to \cref{eqn:alpha_necessary_m}:
	\begin{subequations}\label{eqn:alpha_necessary_t}
		\begin{align}
			\sigma                                     & = \frac{-t'(\alpha)}{\abs{\alpha}^{p-1} \alpha} \\
			t''(\alpha) + p \frac{-t'(\alpha)}{\alpha} & \geq 0 \label{eqn:alpha_second_order_t}         \\
			\frac{-t'(\alpha)}{\alpha}                 & \geq 0 \label{eqn:alpha_first_order_t}
		\end{align}
	\end{subequations}
	Notably, the last two inequalities do not involve $\sigma$, which means we can use them to identify which $\alpha$ are regularized minimizers of $t$.
	According to \cref{thm:persistent_minimizers_local}, any persistent minimizer lies on a continuous curve of regularized minimizers that converges to zero.
	Conversely, if there exists a point $\alpha$ that is not a regularized minimizer of $t$, we know that all regularized minimizers beyond that value in the direction away from zero must be transient.

	For $\alpha$ close enough to zero, say $\alpha \in (-\beta, \beta)$ for some $\beta > 0$, we know that $t'(\alpha)$ is negative, implying that in this region, the only solutions to \cref{eqn:alpha_first_order_t} are positive.
	Using the reasoning above, we know that there are no regularized minimizers $\alpha \in (-\beta, 0)$, and so all persistent minimizers must be positive.
	For positive $\alpha$ we can simplify conditions \cref{eqn:alpha_second_order_t,eqn:alpha_first_order_t}.
	Positive regularized minimizers of $t$ must satisfy:
	\begin{subequations}
		\label{eqn:alpha_arp}
		\begin{align}
			\alpha                            & > 0 \label{eqn:alpha_nonnegative}            \\
			-t'(\alpha)                       & \geq 0 \label{eqn:alpha_first_order_t_poly}  \\
			t''(\alpha) \alpha - p t'(\alpha) & \geq 0. \label{eqn:alpha_second_order_t_poly}
		\end{align}
	\end{subequations}

	Since $t$ is a polynomial of degree $p$, \cref{eqn:alpha_first_order_t_poly,eqn:alpha_second_order_t_poly} are polynomial inequalities of degree $p-1$, and they are satisfied for $\alpha$ small enough because $t'(0) < 0$.
	If $\mathcal{R}_+$ is non-empty, by the definition of $\bar{\alpha}$, it is the smallest $\alpha$ such that one of the two conditions is satisfied with equality.
	Furthermore, since we assume it is a simple root of one of the two polynomials, there must be a sign change in one of them.
	This means there are no minimizers $\alpha \in (\bar{\alpha}, \bar{\alpha} + \beta)$ for some $\beta > 0$.
	Using the same argument as above, all persistent minimizers must therefore be in contained $(0, \bar{\alpha}]$.
	If $\mathcal{R}_+$ is empty, on the other hand, we have already shown that all persistent minimizers are in contained in $(0, \infty)$.
	This is the first part of the claim.

	For the second part, let $\bar{\alpha} = \infty$ if $\mathcal{R}_+$ is empty.
	Using the definition of $\bar{\alpha}$ again, all $\alpha \in (0, \bar{\alpha})$ satisfy \cref{eqn:alpha_first_order_t_poly,eqn:alpha_second_order_t_poly} strictly and are strict local minimizers of the corresponding $m_{\sigma}$.
	To show that they are all persistent, let $\vek{\psi}$ be the minimizer curve defined by $\vek{\psi}(\sigma(\alpha)) = \alpha$ for $\alpha \in (0, \bar{\alpha})$ where
	\begin{equation}\label{eqn:alpha_sigma_of_alpha}
		\sigma(\alpha) = \frac{-t'(\alpha)}{\abs{\alpha}^{p-1} \alpha}.
	\end{equation}
	We need to prove that $\vek{\psi}$ is a well-defined function, that it is continuous, and that its domain is unbounded from above.
	Note first that $\sigma(\alpha)$ is continuously differentiable for positive $\alpha$ and that the derivative of $\sigma(\alpha)$ is
	\begin{equation}
		\sigma'(\alpha) = \frac{-t''(\alpha)\alpha + p t'(\alpha)}{\abs{\alpha}^{p+1}} \ \refl{eqn:alpha_second_order_t_poly} \ 0.
	\end{equation}
	This means that the function is strictly monotonically decreasing and, therefore, a one-to-one mapping between $(0, \bar{\alpha})$ and its range.
	Moreover, as $\alpha$ converges to zero, $-t'(\alpha)$ converges to a positive constant and $\abs{\alpha}^{p-1} \alpha$ converges to zero and so its range is unbounded from above.
	If $\bar{\alpha} < \infty$, this shows that $\vek{\psi} \colon (\sigma(\bar{\alpha}), \infty) \to (0, \bar{\alpha})$ defines (part of) a persistent minimizer curve, which contains all points in $(0, \bar{\alpha})$.
	When $\bar{\alpha} = \infty$, then $\lim_{\alpha \to \bar{\alpha}} \sigma(\alpha) = 0$ because $t'(\alpha) = O(\alpha^{p-1})$ as $\alpha \to \infty$, so that $\vek{\psi} \colon (0, \infty) \to (0, \infty)$ is a persistent minimizer curve, which contains all points in $(0, \infty)$.
	This completes the second part of the claim.
\end{proof}

\begin{corollary}\label{thm:0_persistent}
	Assuming $\bar{\alpha}$ as defined in \cref{thm:one_dimensional_persistent_minimizers} exists, there is a minimizer curve which is bounded and $0$-persistent if and only if $t'(\bar{\alpha}) = 0$.
\end{corollary}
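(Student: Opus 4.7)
The plan is to leverage the explicit parameterization of the persistent minimizer curve already constructed in the proof of Theorem~\ref{thm:one_dimensional_persistent_minimizers}. Recall that there, the map $\sigma(\alpha) = -t'(\alpha)/\alpha^{p}$ was shown to be a strictly decreasing $C^{1}$ bijection from $(0, \bar{\alpha})$ onto $(\sigma(\bar{\alpha}), \infty)$, with inverse precisely the unique maximal persistent minimizer curve $\vek{\psi}$ of $t$. Since $\bar{\alpha}$ is assumed to exist and is therefore finite, every persistent regularized minimizer lies in $(0, \bar{\alpha}]$, so the image of any persistent minimizer curve is automatically bounded, and the entire content of the corollary reduces to characterizing when such a curve is $0$-persistent.

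For the ``if'' direction, I would just note that $t'(\bar{\alpha}) = 0$ forces $\sigma(\bar{\alpha}) = -t'(\bar{\alpha})/\bar{\alpha}^{p} = 0$. The range of $\sigma$ on $(0, \bar{\alpha})$ is therefore $(0, \infty)$, so $\vek{\psi}$ is defined on all of $(0, \infty)$; this yields a $0$-persistent minimizer curve whose image lies in the bounded interval $(0, \bar{\alpha})$.

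For the ``only if'' direction I would argue by contraposition. Suppose $t'(\bar{\alpha}) \neq 0$; since $-t'(\alpha) > 0$ for every $\alpha \in (0, \bar{\alpha})$ (otherwise the smallest element of $\mathcal{R}_{+}$ would lie strictly below $\bar{\alpha}$), continuity forces $t'(\bar{\alpha}) < 0$, so $\sigma(\bar{\alpha}) > 0$. The key step is then to rule out that \emph{any} persistent minimizer curve can be defined at $\sigma \in (0, \sigma(\bar{\alpha}))$. For such $\sigma$ and any $\alpha \in (0, \bar{\alpha}]$, the monotonicity of $\sigma(\cdot)$ gives $\sigma < \sigma(\bar{\alpha}) \leq \sigma(\alpha) = -t'(\alpha)/\alpha^{p}$, hence
\begin{equation*}
m_{t,\sigma}'(\alpha) = t'(\alpha) + \sigma \alpha^{p} < t'(\alpha) + \sigma(\alpha)\alpha^{p} = 0.
\end{equation*}
So $m_{t,\sigma}$ has no stationary point, let alone a local minimum, in $(0, \bar{\alpha}]$. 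Combined with Theorem~\ref{thm:one_dimensional_persistent_minimizers}, which locates all persistent minimizers inside $(0, \bar{\alpha}]$, this implies that no persistent minimizer curve can be defined at $\sigma < \sigma(\bar{\alpha})$, so no such curve is $0$-persistent.

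The main obstacle I anticipate is precisely the uniqueness aspect in the ``only if'' direction: one must exclude not merely that $\vek{\psi}$ itself fails to be $0$-persistent, but that no other maximal persistent minimizer curve could be $0$-persistent either. The strict monotonicity of $\sigma(\alpha)$ on $(0, \bar{\alpha})$, together with the containment of all persistent minimizers in $(0, \bar{\alpha}]$, is exactly what collapses the argument to the single displayed strict inequality above, so the challenge is mostly to phrase this uniqueness cleanly without re-proving pieces of Theorem~\ref{thm:one_dimensional_persistent_minimizers}.
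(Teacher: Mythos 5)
Your proof is correct and follows essentially the same route as the paper's: both directions hinge on the parameterization $\sigma(\alpha) = -t'(\alpha)/\alpha^{p}$ from the proof of \cref{thm:one_dimensional_persistent_minimizers}, its strict monotonicity on $(0,\bar{\alpha})$, and the containment of all persistent minimizers in $(0,\bar{\alpha}]$. The only difference is cosmetic: the paper argues the ``only if'' direction directly by letting $\sigma \to 0$ along an assumed bounded $0$-persistent curve to force $-t'(\bar{\alpha})/\bar{\alpha}^{p} = 0$, whereas you argue by contraposition that for $\sigma < \sigma(\bar{\alpha})$ the model has no stationary point in $(0,\bar{\alpha}]$ --- the same inequality packaged differently, and if anything your version makes the uniqueness point slightly more explicit.
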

\begin{proof}
    See \cref{sec:thm_prf_0_persistent} for the proof.
\end{proof}

\begin{remark}\label{rem:simple_root}
	Note that the assumption that $\bar{\alpha}$ is a simple root, if it exists, holds for a generic univariate polynomial $t$ of degree $p$, since a generic polynomial has only simple roots.
	In this case, determining the interval of persistent minimizers only requires calculating the roots of two polynomials of degree $p-1$ and identifying the smallest positive root $\bar{\alpha}$, which can be done very efficiently.
\end{remark}

\begin{remark}\label{rem:negative_derivative_t}
	The same approach applies if $t'(0) > 0$, as one can simply analyse $\tilde{t}(\alpha) = t(-\alpha)$ with $\tilde{t}'(0) < 0$ and then reverse the signs for the resulting interval.
\end{remark}

\begin{sidewaysfigure}
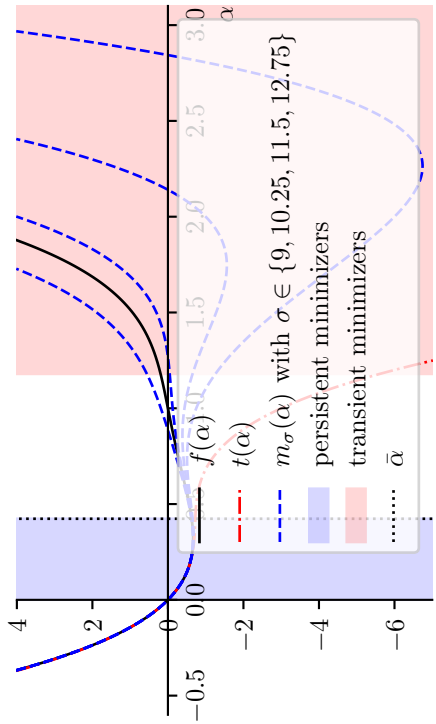
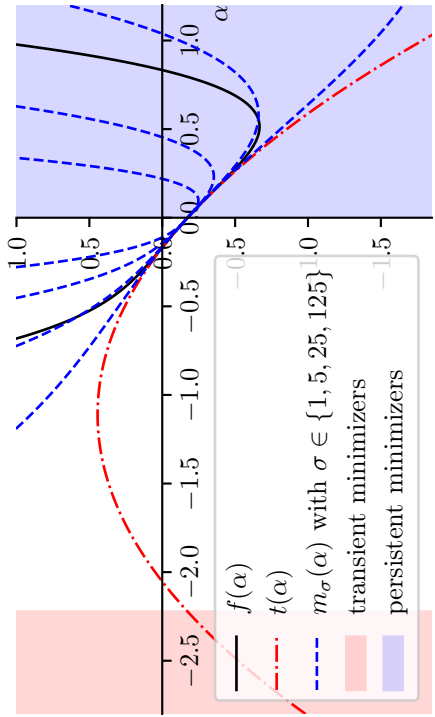
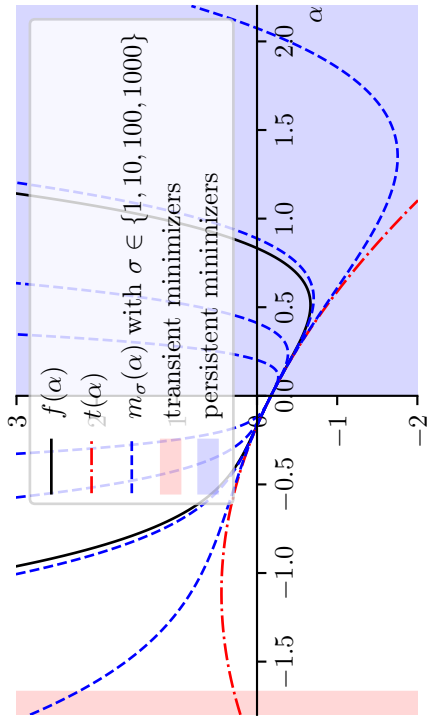

	\begin{subfigure}{0.49\textwidth}
		\centering
		\subimport{plots/illustration}{AR2-func0.pgf}
		\caption{AR$2$ models for $f(x) = 3x^4 - 10x^3 + 12x^2 - 5x$, $\sigma \in \{1, 10, 100, 1000\}$}
		\label{fig:prerejection_illustration_AR2_func0}
	\end{subfigure}
	\hfill
	\begin{subfigure}{0.49\textwidth}
		\centering
		\subimport{plots/illustration}{AR3-func0.pgf}
		\caption{AR$3$ models for $f(x) = 3x^4 - 10x^3 + 12x^2 - 5x$, $\sigma \in \{9, 10.25, 11.5, 12.75\}$}
		\label{fig:prerejection_illustration_AR3_func0}
	\end{subfigure}

	\begin{subfigure}{0.49\textwidth}
		\centering
		\subimport{plots/illustration}{AR2-func1.pgf}
		\caption{AR$2$ models for $f(x) = 3x^4 - \frac{1}{2}x^2 - \frac{10}{9} x - \frac{25}{144}$, $\sigma \in \{1, 5, 25, 125\}$}
		\label{fig:prerejection_illustration_AR2_func1}
	\end{subfigure}
	\hfill
	\begin{subfigure}{0.49\textwidth}
		\centering
		\subimport{plots/illustration}{AR3-func1.pgf}
		\caption{AR$3$ models for $f(x) = 3x^4 - \frac{1}{2}x^2 - \frac{10}{9} x - \frac{25}{144}$, $\sigma \in \{1, 10, 100, 1000\}$}
		\label{fig:prerejection_illustration_AR3_func1}
	\end{subfigure}
	\caption{
		Illustration of AR$2$ and AR$3$  models for different $\sigma$.
		The regions where \cref{eqn:alpha_necessary_t} is satisfied are shaded blue and red, depending on whether the corresponding points are persistent or transient minimizers, respectively.
	}
	\label{fig:prerejection_illustration}
\end{sidewaysfigure}

To illustrate the idea of persistent and transient minimizers we consider the univariate function $f(x) = 3x^4 - 10x^3 + 12x^2 - 5x$ from \cref{fig:discontinuity} and its Taylor expansion at $x = 0$ again.
\Cref{fig:prerejection_illustration_AR3_func0} shows how the AR$3$ local model changes as $\sigma$ varies.
The rightmost valley and its local minimizer disappear as $\sigma$ increases, exactly as shown in \cref{fig:discontinuity}.
Using \cref{thm:one_dimensional_persistent_minimizers}, we find that  $\alpha \in [\alpha_{\textrm{left}}, \alpha_{\textrm{right}}] \coloneqq [(4 - \sqrt{7/2})/5, (4 + \sqrt{7/2})/5] \approx [0.426, 1.174]$ are never minimizers of the regularized model for any value of $\sigma$.
Therefore, all points to the right of that interval, namely, $\alpha > \alpha_{\textrm{right}}$, are transient minimizers (marked in red), and all points to the left up to zero, i.e., $\alpha \in (0,  \alpha_{\textrm{left}})$,  are persistent minimizers (marked in blue); here $\bar{\alpha} = \alpha_{\textrm{left}}$.

\Cref{fig:prerejection_illustration_AR2_func0} shows the AR$2$ models for the same function and expansion point:
since the function is locally convex around the expansion point, the second-order Taylor expansion is a convex quadratic with a bounded minimizer even when $\sigma = 0$.
This unique model minimizer slowly shifts towards zero when $\sigma$ increases as also shown by \cref{fig:discontinuity}. In this case, all possible minimizers are persistent for the AR$2$ model.

The locally nonconvex case is depicted in \cref{fig:prerejection_illustration_AR3_func1,fig:prerejection_illustration_AR2_func1}.
Here, we consider essentially the same fourth-degree polynomial as before but rotated and expanded at a different point ($x = 5/6$).
Both AR$2$ and AR$3$ have transient minimizers that correspond to ascent directions and all values $\alpha \in (0, \infty)$ are persistent.

\subsection{Rejecting directionally transient minimizers}\label{sec:transient_minimizer_rejection}

As motivated at the beginning of the section, transient minimizers are unlikely to accurately predict the behavior of the objective function.
As a practical technique, again following the ray-based minimization idea \ref{itm:simplification-ray} from \cref{sec:interpolation-sigma-update}, we take the step $\ss_k$ found by a generic unconstrained nonconvex optimization algorithm applied to the subproblem, consider the one-dimensional subspace spanned by $\ss_k$ and determine whether the step is a persistent minimizer along this subspace.
If $\alpha = \norm{\ss_k}$ is a persistent regularized minimizer of $t_{\rightarrow}(\alpha) = t^p_{\xx_k}(\alpha \ss_k / \norm{\ss_k})$, we classify the step as \emph{directionally persistent}, otherwise as \emph{directionally transient}.
According to this definition, a step is automatically directionally transient if it is not a descent direction, i.e., if $\nabla f(\xx_k)^\T \ss_k \geq 0$.
Any directionally transient step is rejected without evaluating the objective function at the potential new iterate, and $\sigma_k$ is increased by a constant factor.
For any directionally persistent step, the function is evaluated, and $\sigma_k$ is updated as before.
The implementation of this technique is described in \cref{alg:prerejection}.
It can be combined with any existing updating strategy in \cref{lin:prerejection_sigma_update}.
We denote AR$p$ variants that have been enhanced with our pre-rejection technique with a superscript \textsf{\textsuperscript{+}}, so \textsf{AR$3$-Simple} and \textsf{AR$3$-Interp} become \textsf{AR$3$-Simple\textsuperscript{+}} and \textsf{AR$3$-Interp\textsuperscript{+}}.

\begin{remark}\label{rem:interp_plus}
	When combining \cref{alg:prerejection} and \cref{alg:interpolation_sigma_update_full} into \textsf{AR$3$-Interp\textsuperscript{+}}, we also adapt the interpolation-based search inside \cref{alg:interpolation_sigma_update_full} to only consider directionally persistent minimizers.
	This is  done  by simply adding the \enquote*{constraint} $\alpha \leq \bar{\alpha}$ to \cref{eqn:interpolation_optimization_formulation_alpha}, for $\bar{\alpha}$ calculated in \cref{lin:calculate_alpha_bar} of \cref{alg:prerejection}.
\end{remark}

\begin{procedure}
	\caption{Transient minimizer pre-rejection update}
	\label[procedure]{alg:prerejection}
	\KwParameters{$\gamma_2 > 0$}
	\KwDefaults{$\gamma_2 = 3$}
	\If(\tcp*[f]{directionally transient}){$\nabla f(\xx_k)^\T \ss_k \geq 0$ \label{lin:reject_nondescent_direction}}{
		Set $\xx_{k+1} = \xx_k$ and $\sigma_{k+1} = \gamma_2 \sigma_k$\;
	}
	Define $t_{\rightarrow}(\alpha) = t^p_{\xx_k}(\alpha \ss_k / \norm{\ss_k})$ and $m_{\rightarrow, \sigma}(\alpha) = m_{t_{\rightarrow}, \sigma}(\alpha) = m_{\xx_k, \sigma}^p (\alpha \ss_k / \norm{\ss_k})$\;
	Define $\xi = \max(0, m_{\rightarrow, \sigma_k}'(\norm{\ss_k}))$\; \label{lin:relaxation_xi}
	Compute the set of roots $\mathcal{R} = \{\xi - t_{\rightarrow}'(\alpha) = 0\} \cup \{t_{\rightarrow}''(\alpha)\alpha + p (\xi - t_{\rightarrow}'(\alpha)) = 0\}$\;
	Select the smallest positive root $\bar{\alpha} \in \mathcal{R}$ or let $\bar{\alpha} = \infty$ if there is none \label{lin:calculate_alpha_bar}\;
	\uIf(\tcp*[f]{directionally persistent}){$\norm{\ss_k} \leq \bar{\alpha}$}{
		Compute $\xx_{k+1}$ and $\sigma_{k+1}$ using some updating strategy \label{lin:prerejection_sigma_update}
		\;
	}
	\Else(\tcp*[f]{directionally transient}){
		Set $\xx_{k+1} = \xx_k$ and $\sigma_{k+1} = \gamma_2 \sigma_k$\;
	}
\end{procedure}

Note that there is a slight modification to the procedure derived in the previous section: the computation of $\xi$ in \cref{lin:relaxation_xi} of \cref{alg:prerejection}, which aims to resolve an issue arising when using \cref{thm:one_dimensional_persistent_minimizers} in practice.
When implementing the inequalities in \cref{eqn:alpha_arp} strictly ($\xi = 0$), the method would reject points even if they are extremely close to persistent minimizers as long as they are just outside the interval of persistent minimizers.
Since all subproblem solvers return approximate stationary points we must relax the requirements on $\ss_k$ slightly to avoid this situation.
Instead of requiring $\alpha$ to be a stationary point of $m_{\rightarrow, \sigma}$ in \cref{eqn:alpha_first_order_m}, we require $m_{\rightarrow, \sigma}'(\alpha) \leq \xi$.
Given that $\alpha > 0$ is naturally ensured by \cref{lin:reject_nondescent_direction} of \cref{alg:prerejection}, we can follow the derivation to obtain
\begin{align}
	\label{eqn:persistent_sigma}
	\sigma \leq \frac{\xi - t_{\rightarrow}'(\alpha)}{\alpha^{p}}
\end{align}
which means \cref{eqn:alpha_first_order_t_poly,eqn:alpha_second_order_t_poly} need to be adapted as
\begin{equation}\label{eqn:alpha_relaxed_optimality}
	\xi - t_{\rightarrow}'(\alpha) \geq 0 \quad \text{and} \quad t_{\rightarrow}''(\alpha)\alpha + p(\xi - t_{\rightarrow}'(\alpha)) \geq 0.
\end{equation}
By choosing $\xi = \max(0, m_{\rightarrow, \sigma_k}'(\norm{\ss_k}))$, we make sure that $\xi$ is never negative and that the relaxed stationarity condition $m_{\rightarrow, \sigma}'(\alpha) \leq \xi$ is satisfied at $\alpha = \norm{\ss_k}$ for the current regularization parameter $\sigma_k$.
In other words, since $\ss_k$ is considered to be an approximate stationary point by the subproblem solver, we relax \cref{eqn:alpha_first_order_m} just enough such that $\alpha = \norm{\ss_k}$ is also considered \enquote*{stationary enough} by \cref{alg:prerejection}.

\begin{sidewaysfigure}
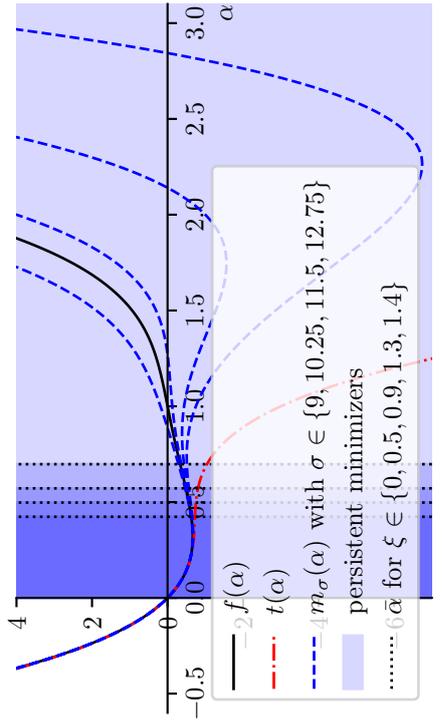
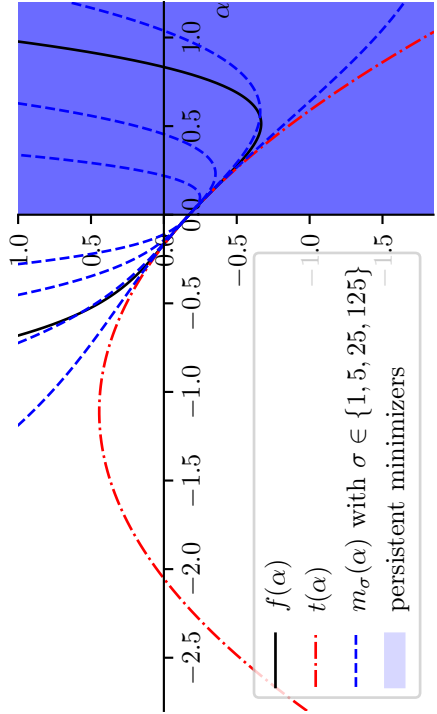
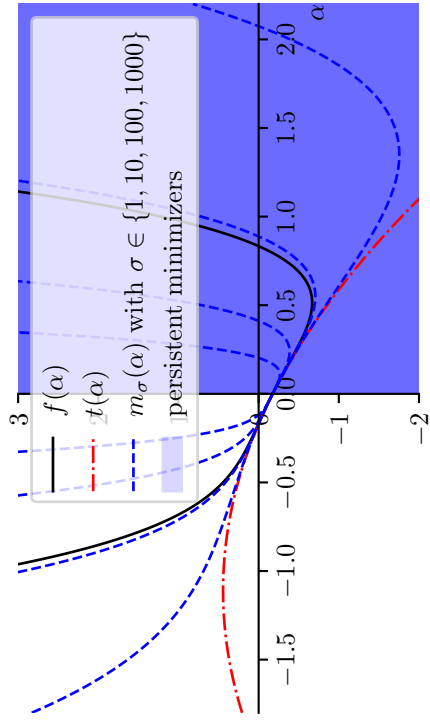

	\begin{subfigure}{0.49\textwidth}
		\centering
		\subimport{plots/illustration}{AR2-func0-relaxed.pgf}
		\caption{AR$2$ models for $f(x) = 3x^4 - 10x^3 + 12x^2 - 5x$, $\sigma \in \{1, 10, 100, 1000\}$ and $\xi \in \{0, 1.5, 10, 20\}$}
		\label{fig:prerejection_illustration_AR2_func0_relaxed}
	\end{subfigure}
	\hfill
	\begin{subfigure}{0.49\textwidth}
		\centering
		\subimport{plots/illustration}{AR3-func0-relaxed.pgf}
		\caption{AR$3$ models for $f(x) = 3x^4 - 10x^3 + 12x^2 - 5x$, $\sigma \in \{9, 10.25, 11.5, 12.75\}$ and $\xi \in \{0, 0.5, 0.9, 1.3, 1.4\}$}
		\label{fig:prerejection_illustration_AR3_func0_relaxed}
	\end{subfigure}

	\begin{subfigure}{0.49\textwidth}
		\centering
		\subimport{plots/illustration}{AR2-func1-relaxed.pgf}
		\caption{AR$2$ models for $f(x) = 3x^4 - \frac{1}{2}x^2 - \frac{10}{9} x - \frac{25}{144}$, $\sigma \in \{1, 5, 25, 125\}$}
		\label{fig:prerejection_illustration_AR2_func1_relaxed}
	\end{subfigure}
	\hfill
	\begin{subfigure}{0.49\textwidth}
		\centering
		\subimport{plots/illustration}{AR3-func1-relaxed.pgf}
		\caption{AR$3$ models for $f(x) = 3x^4 - \frac{1}{2}x^2 - \frac{10}{9} x - \frac{25}{144}$, $\sigma \in \{1, 10, 100, 1000\}$}
		\label{fig:prerejection_illustration_AR3_func1_relaxed}
	\end{subfigure}
	\caption{
		Illustration of AR$2$ and AR$3$ models and the impact of the relaxation parameter $\xi$ in \cref{alg:prerejection}.
		The persistent minimizer regions $(0, \bar{\alpha})$ are shaded in blue with regions overlapping for different values of $\xi$.
		The graphs clearly demonstrate how increasing $\xi$ relaxes the requirements on $\alpha$.
	}
	\label{fig:prerejection_illustration_relaxed}
\end{sidewaysfigure}

\Cref{fig:prerejection_illustration_relaxed} shows the same cases as \cref{fig:prerejection_illustration} but focuses on the effect of $\xi$ on the value of $\bar{\alpha}$.
Consider \cref{fig:prerejection_illustration_AR3_func0_relaxed} as an example.
As $\xi$ increases, the blue region, representing the interval $(0, \bar{\alpha}]$ expands until at $\xi=1.4$, $\bar{\alpha}$ becomes $+\infty$.
The depth of the colored region indicates the shared areas resulting from different choices of $\xi$; for example, the darkest region corresponds to $\xi=0$, as it is mutually contained by all provided $\xi$ values.
The same expansion of the blue region can also be observed in \cref{fig:prerejection_illustration_AR2_func0_relaxed}.
For \cref{fig:prerejection_illustration_AR2_func1_relaxed,fig:prerejection_illustration_AR3_func1_relaxed} the persistent minimizer region is already unbounded for $\xi = 0$, so any further relaxation does not change anything.

Note that, as explained above, $\xi$ is chosen depending on
\begin{equation}
	m_{\rightarrow, \sigma_k}'(\norm{\ss_k}) = \nabla m_k(\ss_k)^\T (\ss_k / \norm{\ss_k}) \leq \norm{\nabla m_k{(\ss_k)}},
\end{equation}
which in turn depends on the termination criterion for the subproblem solver.
In the following experiments, the subproblem solver is terminated using \cref{eqn:subproblem_absolute_tc} with $\eps_{\textrm{sub}} = 10^{-9}$, so that the conditions on $\alpha$ are only relaxed very little.
The values of $\xi$ in \cref{fig:prerejection_illustration_relaxed} are chosen much larger for illustration purposes.

\subsection{Numerical study of the pre-rejection technique}

\begin{sidewaysfigure}
	\centering
	\subimport{plots/prerejection}{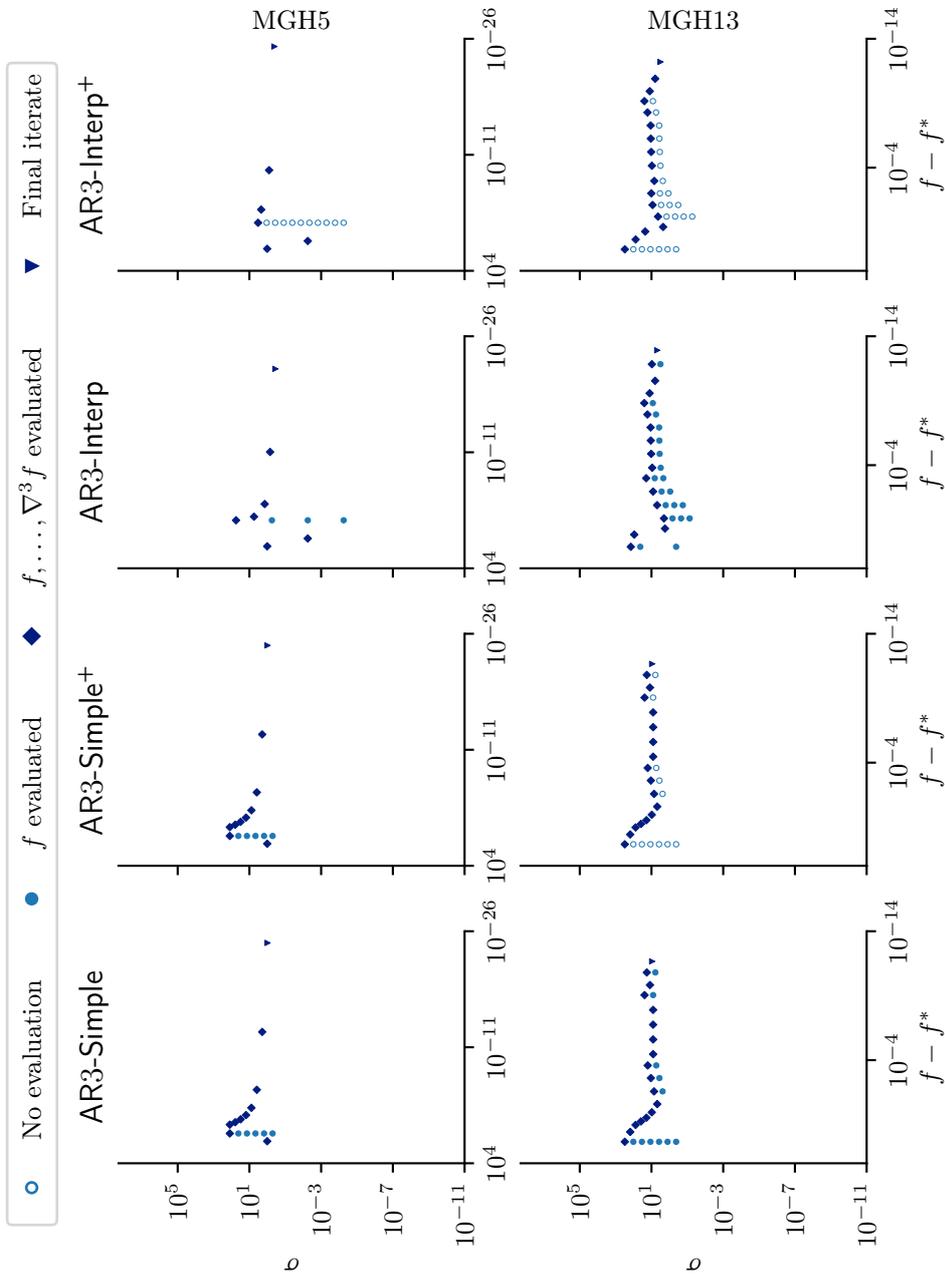}
	\caption{
    The convergence dot plot illustrates the impact of including pre-rejection.
    The pre-rejection technique is shown to effectively \enquote*{predict} unsuccessful iterations and avoid the corresponding function evaluations in both \textsf{AR$3$-Simple\textsuperscript{+}} and \textsf{AR$3$-Interp\textsuperscript{+}}.
    Additionally, we do not find any cases where pre-rejection incorrectly rejects a successful iteration, which would have resulted in a large $\sigma$ or more iterations.
    See \cref{sec:convergence_dot} for details on the convergence dot plots.
    }
    \label{fig:prerejection-convergence}
\end{sidewaysfigure}

\begin{sidewaysfigure}
	\centering
	\subimport{plots/prerejection}{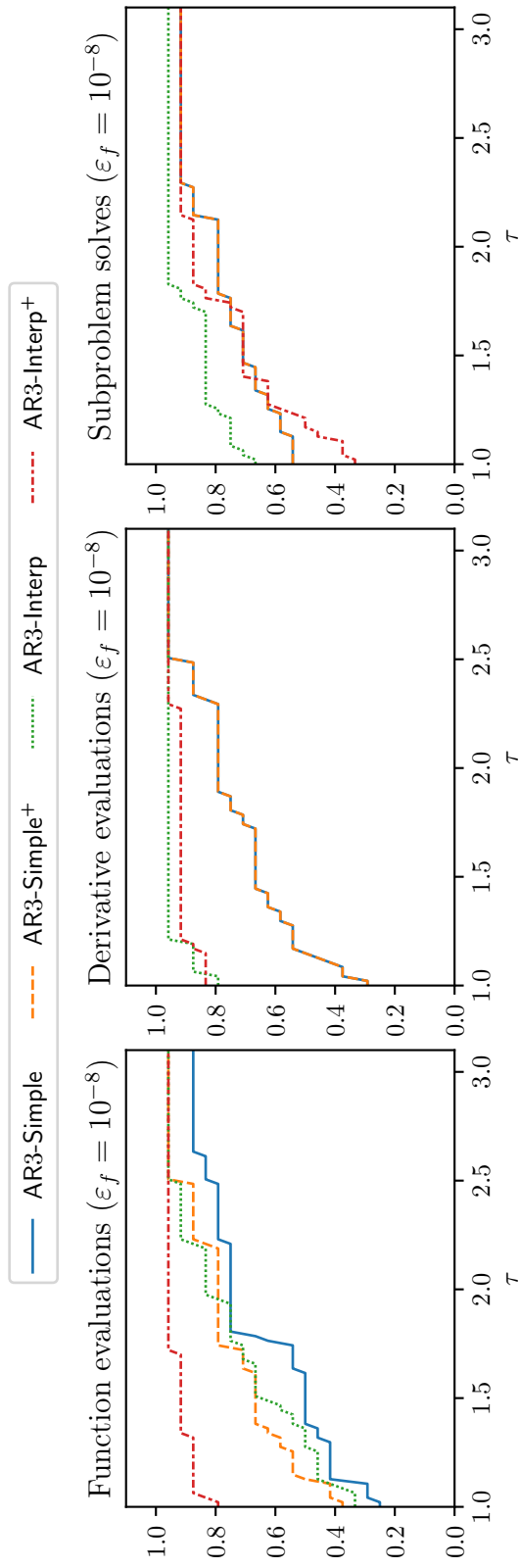}
	\caption{
    The performance profile plot illustrates the impact of including pre-rejection.
    \textsf{AR$3$-Interp\textsuperscript{+}} shows exceptional efficiency in avoiding unnecessary function evaluations, although it increases the number of subproblem solves.
    However, the enhancement in saving function evaluations persists without causing any additional solves for \textsf{AR$3$-Simple\textsuperscript{+}}.
    See \cref{sec:performance_profile} for details on the performance profile plots.
    }
    \label{fig:prerejection-performance-profile}
\end{sidewaysfigure}

In \cref{fig:prerejection-convergence}, we can observe in detail how the pre-rejection mechanism enhances performance.
For MGH5, the total number of function evaluations for \textsf{AR$3$-Interp} is nearly double compared to \textsf{AR$3$-Interp\textsuperscript{+}}.
In the case of MGH13, it was previously shown in \cref{fig:interpolation-convergence} that \textsf{AR$3$-Interp} used more function evaluations than \textsf{AR$3$-Simple}.
However, when using the pre-rejection framework, \textsf{AR$3$-Interp\textsuperscript{+}} avoids additional function evaluations, resulting in the same number of function evaluations as \textsf{AR$3$-Simple\textsuperscript{+}}.
The pre-rejection framework effectively rejects unnecessary function evaluations in unsuccessful cases, as shown for MGH13 in \cref{fig:interpolation-convergence}.

In \cref{fig:prerejection-performance-profile}, we observe that the performance of algorithms with our pre-rejection \cref{alg:prerejection} update is promising in terms of reducing the number of function evaluations.
Both \textsf{AR$3$-Simple} and \textsf{AR$3$-Interp} are enhanced by incorporating \cref{alg:prerejection}. The number of derivative evaluations remains identical for \textsf{AR$3$-Simple} and is similar for \textsf{AR$3$-Interp}, indicating that the additional pre-rejection framework does not increase the number of successful iterations while effectively reducing the number of function evaluations in unsuccessful iterations.
This suggests that the pre-rejection mechanism optimizes the overall process by cutting down unnecessary function evaluations without negatively impacting the convergence behaviour.

\begin{remark}
\label{rem:eta_1}
In AR$3$, the choice of the acceptance threshold $\eta_1$ plays a more delicate role than in AR$2$.
If no pre-rejection is used, the performance can vary substantially.
% In this case, choosing a small $\eta_1$ may be advantageous.
As illustrated in \cref{fig:discontinuity,fig:prerejection_illustration}, transient minimizers can have large norms regardless of the definiteness of the Hessian, and thus $\deltapred_k$ is expected to be also large since $\deltapred_k = t_k(\vek{0}) - t_k(\ss_k) > \sigma_k || \ss_k ||^{4} / 4$, which in turn implies that $\rho_k$ is likely to be very small. In this sense, a small $\eta_1$ increases the chance that transient minimizers (that still yield decrease) are accepted as successful.
However, when the pre-rejection is enabled and transient minimizers are rejected, this effect is expected to be limited and the behavior closer to that of AR$2$. The dependence of $\rho_k$ on $\sigma_k$ is then effectively continuous as shown in \cref{fig:discontinuity}, and the overall performance becomes less sensitive to moderate changes in $\eta_1$.
\end{remark}

\begin{remark}
\label{rem:prerejection}
The use of pre-rejection is not always the optimal choice. When $\nabla^p f$ is Lipschitz continuous with constant $L_p$ and $\sigma \ge \frac{p+1}{p!} L_p$, the AR$p$ model overestimates the objective function everywhere \cite[Lemma 2.1]{cartis2020concise}. In this case, choosing the global minimizer of the model, whether transient or persistent, is the best option as it provides the largest predicted decrease.
Moreover, selecting any local minimizer is sufficient to retain the optimal global complexity of the AR$p$ algorithm \cite{cartis2022evaluation}. Therefore, it is not recommended to apply pre-rejection when $\sigma_k$ is known to be sufficiently large such that $m_k(\ss_k) \ge f(\xx_k + \ss_k)$ holds.
In practice, however, the adaptively chosen $\sigma_k$ is often small enough that the global minimizer of the AR$p$ subproblem may lie far from the expansion point, where the Taylor model does not reflect the objective function correctly. In such cases, the pre-rejection mechanism (described below) typically classifies this minimizer as transient and rejects it. We will see later that the unsuccessful steps are correctly rejected by the pre-rejection mechanism, without computing additional function evaluations, as illustrated in \cref{fig:prerejection-convergence} and \cref{fig:prerejection-performance-profile}.
\end{remark}

\subsection{Comparison with the BGMS step control strategy}

\begin{procedure}[ht]
	\KwParameters{$\hat{\eta}_1, \hat{\eta}_2 > 0$, $J \in \N$, $\alpha > 0$, $0 < \gamma_1 < 1 < \gamma_2$, $\sigma_{\min} > 0$}
	\KwDefaults{$\hat{\eta}_1 = 10^3$, $\hat{\eta}_2 = 3$, $J = 20$, $\alpha = 10^{-8}$, $\gamma_1=0.5$, $\gamma_2=10$, $\sigma_{\min}=10^{-8}$}
	\KwNote{The procedure requires two additional variables: $j_k$ and $\sigma_k^{\textrm{ini}}$. Before the first iteration, the initial regularization parameter computed in \cref{eqn:sigma0_taylor} is stored in $\sigma_0^{\textrm{ini}}$, and \(\sigma_0\) and \(j_0\) are set zero.}
	\uIf{$\sigma_k = 0$ and no $\ss_k$ could be found}{
		Set $j_{k+1} = j_k + 1$, $\xx_{k+1} = \xx_k$, and $\sigma_{k+1} = \sigma_k^{\textrm{ini}}$
	}
	\Else{
		\uIf{$j_k \geq J$ or $\big( \frac{t_k(\mathbf{0}) - t_k(\ss_k)}{\max \{1, \abs{t_k(\mathbf{0})}\}} \leq \hat{\eta}_1$ and $\frac{\norm{\ss_k}_\infty}{\max \{1, \norm{\xx_k}_\infty\}} \leq \hat{\eta}_2 \big)$ \label{lin:bgms_safeguarded_prerejection}}{
			\uIf{$f(\xx_k + \ss_k) \leq f(\xx_k) - \alpha \norm{\ss_k}^{p+1}$}{
				Set $j_{k+1} = 0$, $\xx_{k+1} = \xx_k + \ss_k$, and $\sigma_{k+1} = 0$
			}
			\Else{
				Set $j_{k+1} = j_k + 1$, $\xx_{k+1} = \xx_k$, and $\sigma_{k+1} = \begin{cases}
					\sigma_k^{\textrm{ini}} & \text{if } \sigma_k = 0 \\
					\gamma_2 \sigma_k & \text{otherwise}
				\end{cases}$
			}
		}
		\Else{
			Set $j_{k+1} = j_k + 1$, $\xx_{k+1} = \xx_k$, and $\sigma_{k+1} = \begin{cases}
				\sigma_k^{\textrm{ini}} & \text{if } \sigma_k = 0 \\
				\gamma_2 \sigma_k & \text{otherwise}
			\end{cases}$
		}
	}
	Set $\sigma_{k+1}^{\textrm{ini}} = \max\{\gamma_1 \sigma_k, \, \gamma_1 \sigma_k^{\textrm{ini}}, \, \sigma_{\min}\}$\;
	\caption{BGMS update (Algorithm 4.1 in \cite{birgin2020use})}
	\label[procedure]{alg:bgms_sigma_update}
\end{procedure}

BGMS is a regularized third-order tensor method that incorporates both an updating strategy and a heuristic pre-rejection, as detailed in \cite[Algorithm 4.1]{birgin2020use}.
BGMS introduces several novel features, including:
\begin{itemize}
	\item \emph{Trial step with no regularization}: After every successful iteration, the new Taylor expansion is constructed and minimized without regularization ($\sigma = 0$). When no minimizer can be found or the step is not successful, the method switches to a nonzero regularization parameter ($\sigma > 0$).
	\item \emph{Pre-rejection of long and overly optimistic steps}: Whenever one of the following conditions,
	\begin{align}
		\label{eqn:BGMS_prerejection}
		\frac{t_k(\mathbf{0}) - t_k(\ss_k)}{\max \{1, \abs{t_k(\mathbf{0})}\}} \leq \hat{\eta}_1 \quad \text{and} \quad \frac{\norm{\ss_k}_\infty}{\max \{1, \norm{\xx_k}_\infty\}} \leq \hat{\eta}_2
	\end{align}
	is not satisfied, the step is rejected without evaluating the function at $\xx_k + \ss_k$.
	\item \emph{Safeguarded pre-rejection}: After $J$ consecutive unsuccessful iterations, the pre-rejection mechanism is deactivated until the next successful step, which means $f(\xx_k + \ss_k)$ is evaluated even when \cref{eqn:BGMS_prerejection} is violated.
	The authors report best results for $J = 20$, in which case this safeguard is almost never used, and can be understood as a device to ensure optimal theoretical complexity despite the heuristic nature of \cref{eqn:BGMS_prerejection}.
\end{itemize}
These features allow the BGMS algorithm to significantly reduce the total number of function evaluations.
Specifically, the first inequality in \cref{eqn:BGMS_prerejection} rejects steps with large predicted decrease, while the second inequality rejects steps with large entries.
Therefore, this safeguarded pre-rejection approach serves to \enquote*{discard unuseful steps} \cite{birgin2020use} without compromising the theoretical optimal complexity results derived in the same paper.

\begin{figure}
	\centering
	\subimport{plots/updates}{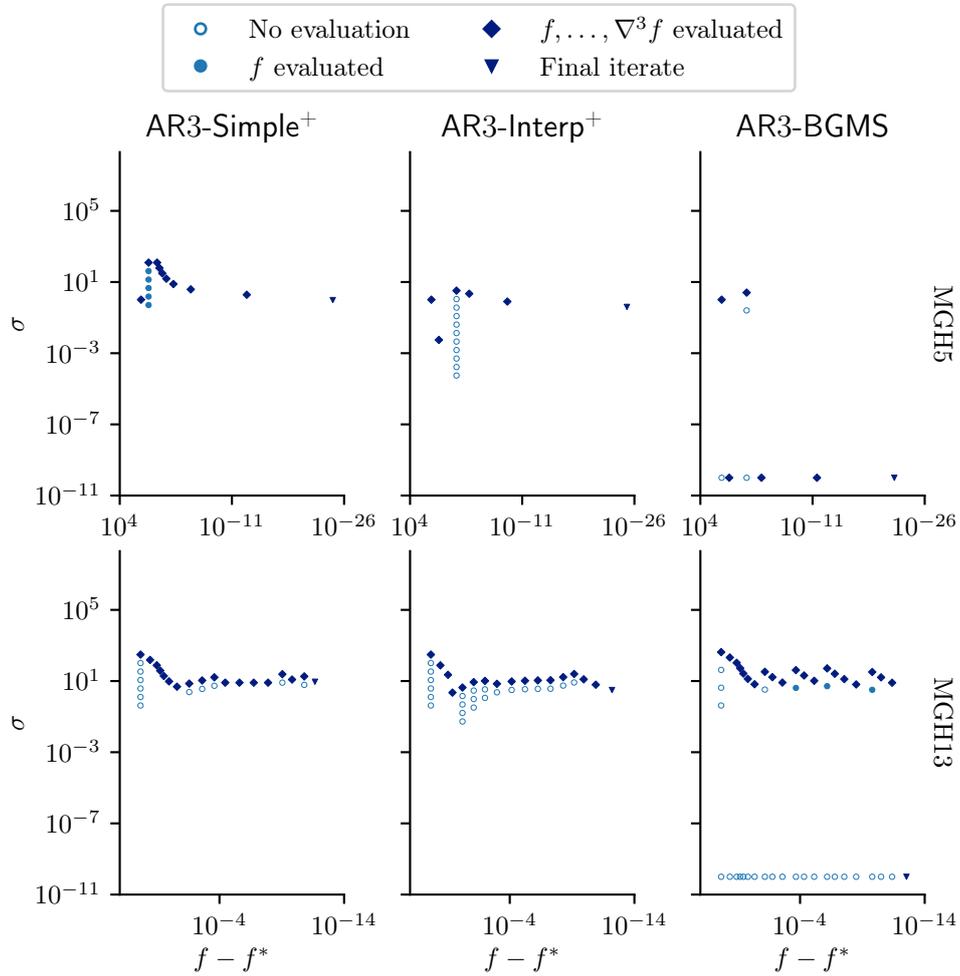}
	\caption{
    The convergence dot plot highlights the differences between the three update strategies. Note that pre-rejection is enabled for all three methods.
    \textsf{AR$3$-BGMS} demonstrates its unique ability to leverage large step-size trials with $\sigma_k = 0$ on MGH5.
    However, these attempts fail for MGH13, though fortunately, the objective function evaluation is avoided in each case.
    See \cref{sec:convergence_dot} for details on the convergence dot plots.
    }
    \label{fig:updates-convergence}
\end{figure}

\begin{sidewaysfigure}
	\centering
	\subimport{plots/updates}{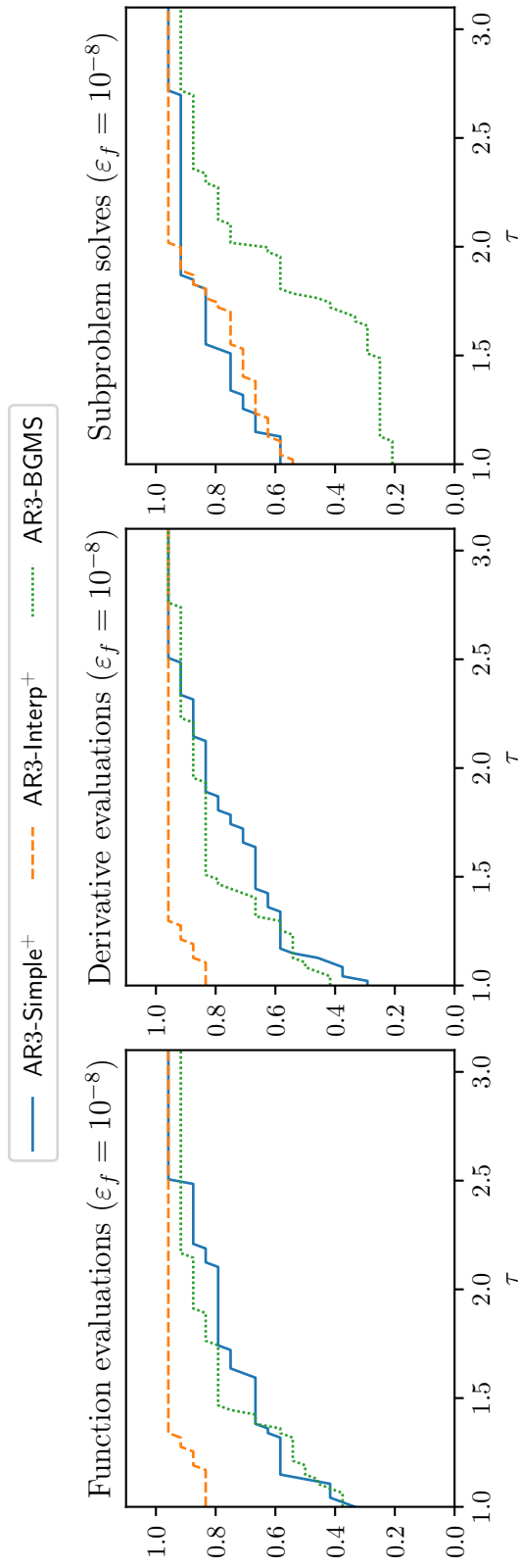}
	\caption{
    The performance profile plot highlights the differences between the three update strategies.
    Note that pre-rejection is enabled for all three methods.
    \textsf{AR$3$-Interp\textsuperscript{+}} outperforms the other two methods in terms of function evaluations and derivative evaluations.
    \textsf{AR$3$-BGMS} is comparable to \textsf{AR$3$-Simple\textsuperscript{+}} but is less efficient in subproblem solves.
    See \cref{sec:performance_profile} for details on the performance profile plots.
    }
    \label{fig:updates-performance-profile}
\end{sidewaysfigure}

\Cref{fig:updates-convergence,fig:updates-performance-profile} compare the simple and interpolation-based updating strategies enhanced by \cref{alg:prerejection} with the updating strategy and pre-rejection framework in \cref{alg:bgms_sigma_update}.
\textsf{AR$3$-BGMS} represents our implementation of the BGMS update.
In particular, it uses the same subproblem solver as \textsf{AR$3$-Simple} and \textsf{AR$3$-Interp} (see \cref{sec:implementation_details}) to ensure a fair comparison.
For the parameter settings of \textsf{AR$3$-BGMS}, we adhere to the parameters recommended in \cite{birgin2020use}.

The convergence dot plots in \cref{fig:updates-convergence} clearly show the attempts at calculating a model minimizer of the unregularized Taylor expansion with dots at the bottom of the graph.
Note that iterations where \(\sigma_k = 0\) are plotted as if \(\sigma_k = 10^{-10}\) to make them visible on the log-scaled axis.\footnote{As \(10^{-10} < \sigma_{\min} = 10^{-8}\) there is no risk of confusion.}
For MGH5,  the zero regularization attempt is highly effective. Three iterations successfully make use of the minimizer of the unregularized Taylor expansion and \textsf{AR$3$-BGMS} terminates after six iterations, which is the same number required by \textsf{AR$3$-Interp\textsuperscript{+}}.
In contrast, for MGH13, the zero regularization attempts are never successful.
Additionally, there are unsuccessful iterations where the function is evaluated, unlike in \textsf{AR$3$-Interp\textsuperscript{+}}, indicating that the pre-rejection mechanism in \textsf{AR$3$-BGMS} may be less efficient compared to the one discussed in this section.

The performance profile plots in \cref{fig:updates-performance-profile} demonstrate that \textsf{AR$3$-Interp\textsuperscript{+}} is more efficient than the other two methods in terms of both function evaluations and derivative evaluations.
While \textsf{AR$3$-BGMS} generally outperforms \textsf{AR$3$-Simple\textsuperscript{+}} across most problems, it lags in the subproblem solving plot due to its attempts to not use regularization after every successful iteration.

\section{Subproblem termination conditions}
\label{sec:theta}

So far all experiments used \cref{eqn:subproblem_absolute_tc} with $\eps_{\textrm{sub}} = 10^{-9}$ to decide when to terminate the search for a subproblem minimizer, which we consider as a high accuracy condition.
In order to investigate the impact of the subproblem termination condition on the performance of the algorithmic variants we are investigating/proposing, we compare this condition with the relative condition \cref{eqn:subproblem_relative_tc} for $\theta \in \{10^{-2}, 10^{0}, 10^{2}, 10^{4}\}$.
For larger steps, the latter condition allows for significantly more inexactness in the model solution, which can also lead to additional relaxation in the pre-rejection procedure \cref{alg:prerejection}.
Additional experiments, conducted to compare variants using the \enquote*{generalized norm} termination condition \cite{gratton2023adaptive}, are discussed in \cref{sec:generalized_norm}.

\subsection{Numerical study of subproblem termination conditions}

\begin{sidewaysfigure}
	\centering
	\subimport{plots/theta}{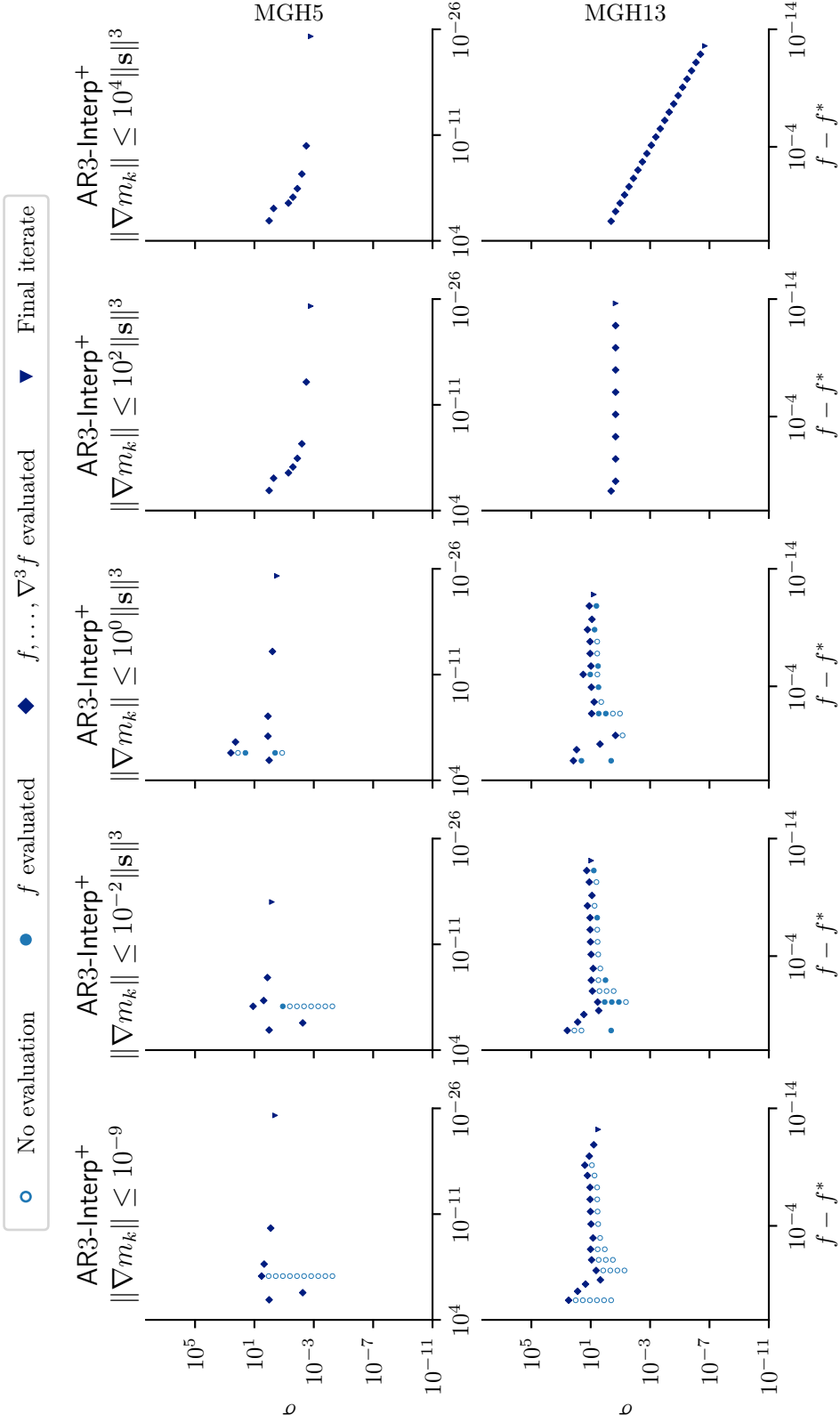}
	\caption{
    The convergence dot plot shows the differences between different subproblem termination conditions, namely \cref{eqn:subproblem_absolute_tc} with $\eps_{\textrm{sub}} = 10^{-9}$ and \cref{eqn:subproblem_relative_tc} with $\theta \in \{10^{-2}, 10^{0}, 10^{2}, 10^{4}\}$.
    Iterations with small $\sigma_k$ are more frequently successful for inexact solves.
    In the case of MGH13, choosing $\theta$ too large, however, leads to slow convergence.
    See \cref{sec:convergence_dot} for details on the convergence dot plots.
    }
    \label{fig:theta-convergence}
\end{sidewaysfigure}

\begin{sidewaysfigure}
	\centering
	\subimport{plots/theta}{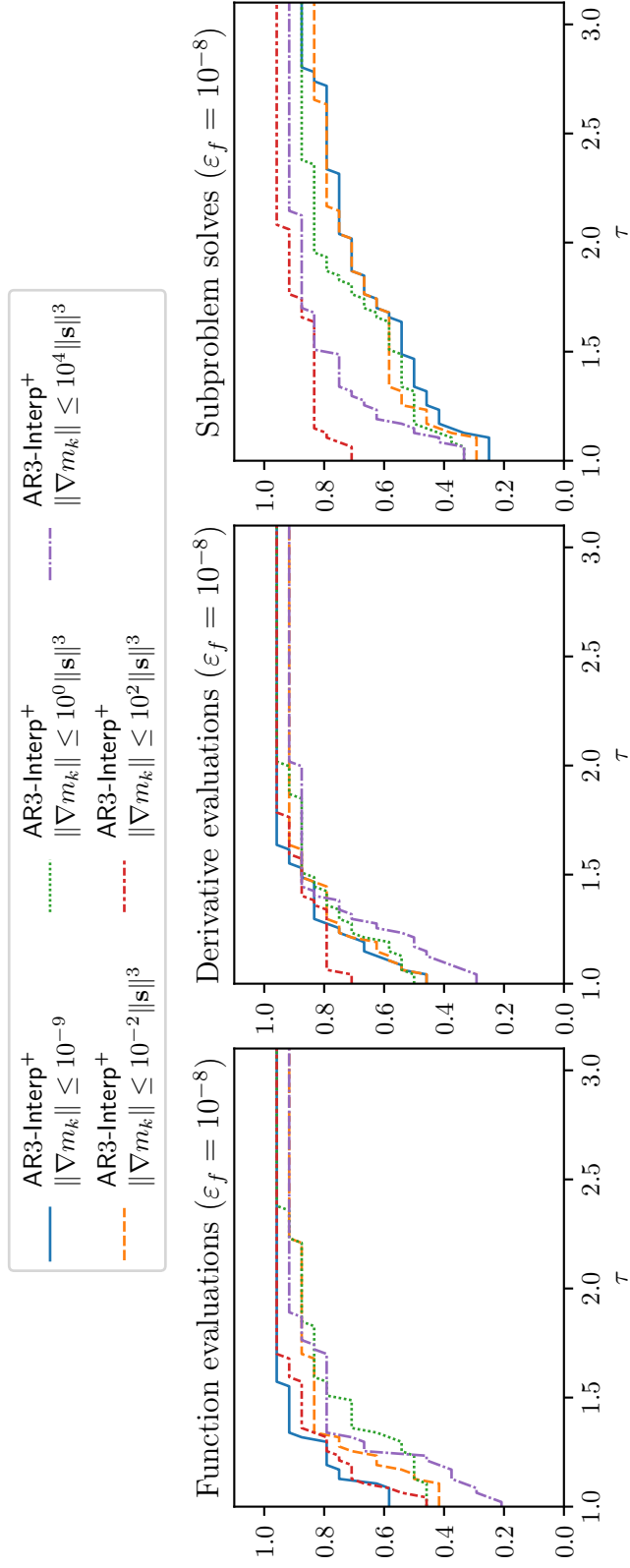}
	\caption{
    The performance profile plot shows the differences between different subproblem termination conditions, namely \cref{eqn:subproblem_absolute_tc} with $\eps_{\textrm{sub}} = 10^{-9}$ and \cref{eqn:subproblem_relative_tc} with $\theta \in \{10^{-2}, 10^{0}, 10^{2}, 10^{4}\}$.
    For all metrics, $\theta = 100$ is among the best performing variants.
    See \cref{sec:performance_profile} for details on the performance profile plots.
    }
    \label{fig:theta-performance-profile}
\end{sidewaysfigure}

As shown in \cref{fig:theta-convergence}, when $\theta$ becomes large enough, such as $\theta = 100$, \textsf{AR$3$-Interp\textsuperscript{+}} converges without any unsuccessful iterations for MGH5 and MGH13.
However, for $\theta = 10^4$, the early termination of the subproblem solver leads to slower progress and additional iterations for MGH13.
This suggests that the optimal parameter value is around $\theta=100$ in these tests.
In general, the optimal choice of $\theta$ will depend on both the test set and the subproblem solver and its parameters, since the solvers' path towards the model minimizer becomes relevant for less stringent termination conditions.

We observed that for larger $\theta$ values, our pre-rejection mechanism was rarely activated and did neither reduce nor increase the number of function evaluations significantly.
However, since the optimal $\theta$ may not be known in advance, it is still advisable to keep pre-rejection enabled to avoid unnecessary unsuccessful function evaluations where possible, as illustrated in the left three columns of \cref{fig:theta-convergence}.

The conclusions from the convergence dot plots also hold for the other functions in our test set.
Comparing \cref{eqn:subproblem_relative_tc} and \cref{eqn:subproblem_absolute_tc}, \cref{fig:theta-performance-profile} shows that the number of derivative evaluations does not change significantly, while the number of subproblem solves decreases, indicating that a higher percentage of iterations are successful.
The added inexactness allows the subproblem solver to terminate earlier and select a point which may not be close to a stationary point of the model but nonetheless leads to a successful iteration.
Again, the value of $\theta = 100$ consistently performs  best for \cref{eqn:subproblem_relative_tc} on function evaluations, derivative evaluations, and subproblem solves and is competitive or outperforms \cref{eqn:subproblem_absolute_tc} with $\eps_{\textrm{sub}} = 10^{-9}$.
Therefore, we view \textsf{AR$3$-Interp\textsuperscript{+}} using \cref{eqn:subproblem_relative_tc} and $\theta = 100$ as the most successful of all variants considered so far and will benchmark it in the following section.

\section{Benchmarking}
\label{sec:benchmark}

After determining the best-performing AR$3$ variant overall, we need to compare it to state-of-the-art AR$p$ variants.
Firstly, we benchmark against the second-order variant  \textsf{AR$2$-Interp}.
For \(p = 2\), \cref{alg:interpolation_sigma_update_full} matches the updating strategy for the regularization parameter proposed in \cite{gould2012updating}, which was found to outperform simpler updating rules.
%Therefore, the method can be considered an efficient variant of AR$2$.
We do not use \textsf{AR$2$-Interp\textsuperscript{+}} because the addition of pre-rejection would not make a difference.
As explained in \cref{sec:implementation_details}, we use a factorization-based solver which finds a high-accuracy global minimizer of the AR$2$ subproblems, and this global minimizer is always persistent by \cref{thm:ar2_global_min_persistent} (and so
the step \(\ss_k\) computed in this way would not be pre-rejected).
The best-performing termination condition for \textsf{AR$2$-Interp} was determined to be \cref{eqn:subproblem_relative_tc} with $\theta = 0.01$ (see \cref{sec:inexactness_AR2}).

Secondly, we compare our best-performing AR$3$ variant to the AR$3$ algorithm proposed in \cite{birgin2020use}, which we will refer to as \textsf{AR$3$-BGMS/Gencan} as it uses \cref{alg:bgms_sigma_update} to update the regularization parameter and Gencan \cite{birgin2001box,birgin2002large,andretta2005practical} as the subproblem solver.\footnote{Their implementation can be found at \url{https://github.com/johngardenghi/ar4} and \url{https://www.ime.usp.br/~egbirgin/sources/bgms/code}.}
Lastly, we also include \textsf{AR$3$-BGMS}, which is our implementation of the same updating strategy paired with our standard subproblem solver, to give some context on the differences caused by the subproblem solver and other implementation details.
For both BGMS variants we report their results using \cref{eqn:subproblem_relative_tc} with $\theta = 100$ as this is the subproblem termination condition used in \cite{birgin2020use}.

We included all $35$ problems from the MGH test set in the following performance profile plot, while our earlier numerical studies  were performed only on a strict subset of these test functions (including only half of the MGH test set and a few other problems) in order to avoid tuning. As mentioned above, our best performing variant has been selected as \textsf{AR$3$-Interp\textsuperscript{+}} with \cref{eqn:subproblem_relative_tc} and $\theta = 100$.
%as the best variant
%is not the result of fine-tuning for this test set.

\begin{sidewaysfigure}
	\centering
	\subimport{plots/benchmark}{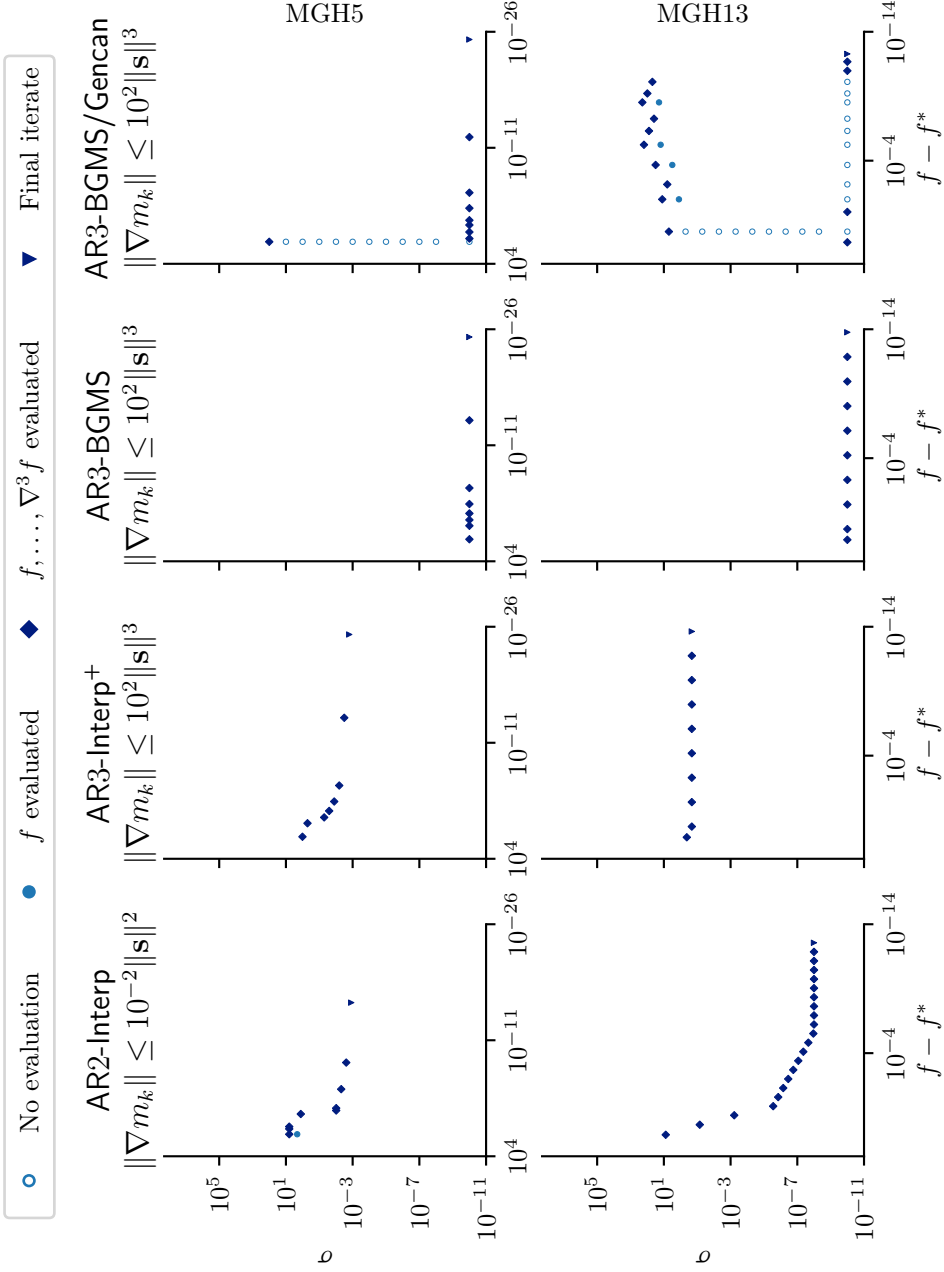}
	\caption{
    The convergence dot plot compares \textsf{AR$2$-Interp}, \textsf{AR$3$-Interp\textsuperscript{+}}, \textsf{AR$3$-BGMS} and \textsf{AR$3$-BGMS/Gencan} using \cref{eqn:subproblem_relative_tc}.
    The second-order method shows slower asymptotic convergence for both MGH5 and MGH13. There are clear differences between \textsf{AR$3$-BGMS} and \textsf{AR$3$-BGMS/Gencan} caused by differing solvers for the AR$3$ subproblems.
    See \cref{sec:convergence_dot} for details on the convergence dot plots.
    }
    \label{fig:benchmark-convergence}
\end{sidewaysfigure}

\begin{sidewaysfigure}
	\centering
	\subimport{plots/benchmark}{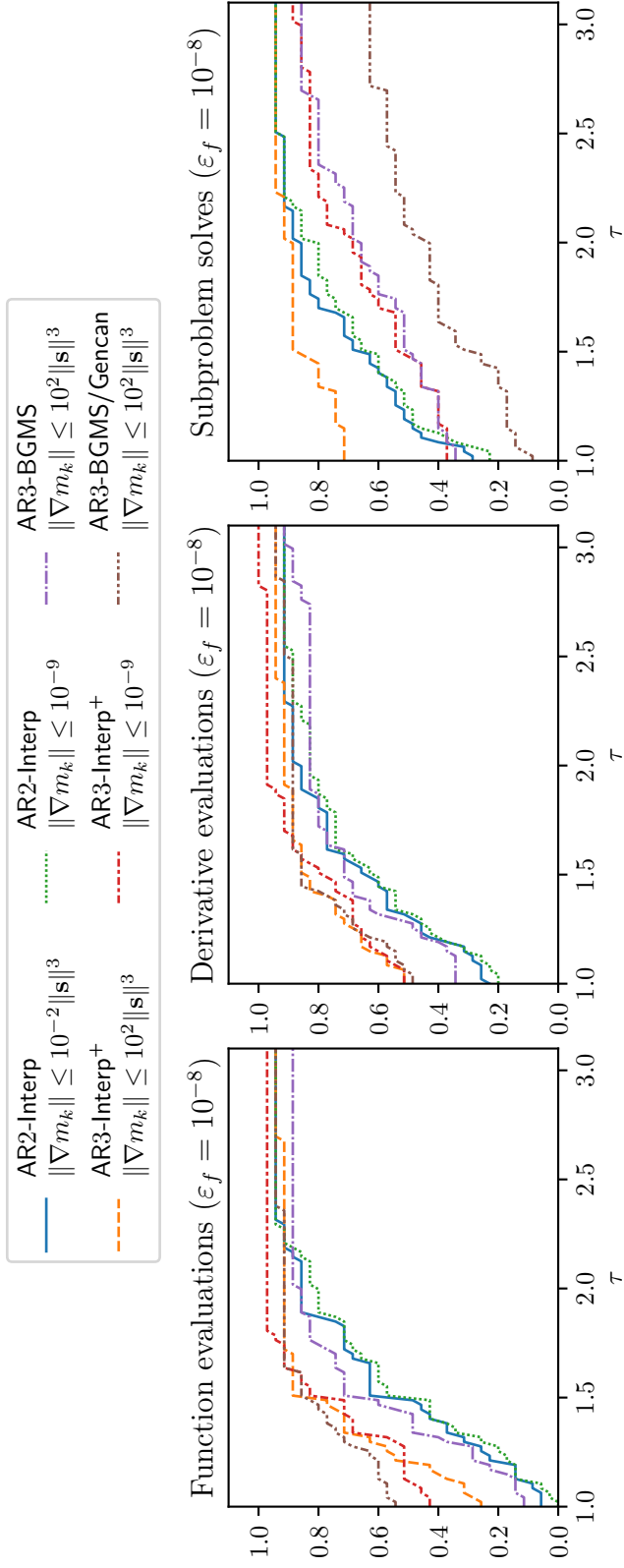}
	\caption{
    The performance profile plot compares \textsf{AR$2$-Interp}, \textsf{AR$3$-Interp\textsuperscript{+}}, \textsf{AR$3$-BGMS} and \textsf{AR$3$-BGMS/Gencan} using \cref{eqn:subproblem_absolute_tc,eqn:subproblem_relative_tc} on the full $35$ MGH test problems.
    \textsf{AR$3$-Interp\textsuperscript{+}} using the relative subproblem termination condition clearly outperforms the \textsf{AR$2$-Interp} variants on all metrics and is competitive with \textsf{AR$3$-BGMS/Gencan} in terms of function and derivative evaluations, while using far fewer subproblem solves.
    See \cref{sec:performance_profile} for details on the performance profile plots.}
	\label{fig:benchmark-performance-profile}
\end{sidewaysfigure}

As an illustration, the convergence dot plot in \cref{fig:benchmark-convergence} clearly shows the difference in asymptotic convergence between AR$2$ and AR$3$.
In the case of MGH5, both \textsf{AR$2$-Interp} and \textsf{AR$3$-Interp\textsuperscript{+}} exhibit superlinear local convergence, but the acceleration of convergence from one step to the next is greater for AR$3$.
Similarly, for MGH13 both methods have linear convergence of the function value to its optimum, but the rate is better for AR$3$, even when comparing to the slightly slower convergence of \textsf{AR$3$-Interp\textsuperscript{+}} with \cref{eqn:subproblem_absolute_tc} shown in \cref{fig:theta-convergence}.
The graph in \cref{fig:benchmark-convergence} also shows how the solver used for the AR$3$ subproblem impacts  the number of iterations and the convergence speed, given that the left two columns differ quite significantly, especially for MGH13.

Turning to the performance profile plots in \cref{fig:benchmark-performance-profile}, we see that in terms of function and derivative evaluations both \textsf{AR$2$-Interp} variants (using \cref{eqn:subproblem_absolute_tc} and \cref{eqn:subproblem_relative_tc}) are outperformed by the AR$3$ methods.
This agrees with the numerical findings in \cite{birgin2020use}, as well as the global complexity results for AR$p$ \cite{birgin2017worst,cartis2020concise,cartis2020sharp} that third-order methods can exploit  access to third derivatives to speed up the optimization calculations.
Within the AR$3$ methods, \textsf{AR$3$-BGMS/Gencan} generally uses less function and derivative evaluations but more subproblem solves than \textsf{AR$3$-BGMS}, which highlights the need to consider the subproblem solver as a central component of AR$p$ algorithms, especially when allowing for approximate solutions.
Overall, the combination of modifications to the basic AR$3$ algorithm suggested in this paper (\textsf{AR$3$-Interp\textsuperscript{+}}) is among the best-performing variants for function evaluations, derivative evaluations, and subproblem solves and therefore an efficient AR$3$ implementation.

\section{Hessian- and tensor-free implementation}
\label{sec:tensor_free}

As explained in the introduction, in the previous experiments we use \textsf{MCM} \cite[Algorithm 6.1]{cartis2011adaptiveI} as the subproblem solver for \textsf{AR$2$-Simple}, which in turn is used to solve the AR$3$ subproblems.
This choice restricts us to explicit Hessian evaluations and evaluations of third derivatives (provided either explicitly or as tensor-vector product function handles).
In the case where the user provides the third derivative through tensor-vector products, our algorithm is tensor-free in the sense that it is not necessary to store a possibly dense tensor, bringing down the memory requirements from $\mathcal{O}(d^3)$ to $\mathcal{O}(d^2)$, but at the cost of requiring more than one oracle call per successful iteration.
As the dimension $d$ grows, even needing $\mathcal{O}(d^2)$ memory can be prohibitive.
Moreover, direct factorizations of dense Hessians used by \textsf{MCM} become impractical, while higher-order methods such as AR$3$ remain attractive in principle.
To address this, we implemented \textsf{GLRT} \cite[Section 10]{cartis2022evaluation} within our package, a Krylov subspace-based iterative method for solving cubic regularization subproblems.
It only requires matrix-vector products of the matrix involved in the AR$2$ subproblem, which means it suffices to provide the Hessian through matrix-vector products and the third derivative through tensor-vector-vector products, making the overall algorithm both Hessian- and tensor-free.
In this setting, the memory requirement is dominated by storing a small number of Krylov basis vectors and auxiliary work arrays, that is, by $\mathcal{O}(d)$ rather than by $\mathcal{O}(d^2)$ or $\mathcal{O}(d^3)$.\footnote{In \textsf{GLRT}, the Krylov subspace basis must be stored explicitly as a matrix of size $d \times t$, where $t \leq d$ is the Krylov subspace iteration counter. Empirical evidence reported in \cite[Appendix A]{carmon2018analysis} suggests that, in practice, one typically has $t \leq 20$ for GLRT. Therefore, we omit $t$ in the memory cost.}
\Cref{tab:derivate_oracle_options} summarizes the different formats in which second- and third-order information can be provided to the algorithm, and which of the inner-inner solvers we implemented are compatible with each of those choices.

\begin{table}
    \caption{Types of derivative oracles supported by our implementation}
    \centering
    \begin{tabular}{lcc}
        \toprule
         & Evaluations & Compatible solvers \\
        \midrule
        Explicit & $\nabla^2 f(\xx)$, $\nabla^3 f(\xx)$ & \textsf{MCM}, \textsf{GLRT} \\
        Tensor-free & $\nabla^2 f(\xx)$, $\vv \mapsto \nabla^3 f(\xx)[\vv]$ & \textsf{MCM}, \textsf{GLRT} \\
        Hessian- \& tensor-free & $\vv \mapsto \nabla^2 f(\xx) \vv$, $(\vv, \ww) \mapsto \nabla^3 f(\xx)[\vv, \ww]$ & \textsf{GLRT} \\
        \bottomrule
    \end{tabular}
    \label{tab:derivate_oracle_options}
\end{table}

\begin{sidewaysfigure}
	\centering
	\subimport{plots/GLRT}{performance_profile-p23-tau-f-one-line.pgf}
	\caption{
		The performance profile plot shows the impact of using the subproblem solvers \textsf{MCM} and \textsf{GLRT} for solving the AR$2$ subproblems on the full 35 MGH test problems. The AR$2$ subproblem solvers are employed directly for AR$2$ and within \textsf{AR$2$-Simple} for AR$3$. For \textsf{AR$3$-Interp\textsuperscript{+}}, $\tilde{m}_k$ refers to the AR$2$ model constructed while minimizing the AR$3$ model. See \cref{sec:performance_profile} for details on the performance profile plots.
    }
	\label{fig:GLRT-performance-profile}
\end{sidewaysfigure}

To illustrate the correctness of our \textsf{GLRT} implementation and the impact on the usual metrics of function and derivative evaluations as well subproblem solves, we compare \textsf{MCM} and \textsf{GLRT} as subproblem solvers for AR$2$ and as subsubproblem solvers for AR$3$ in \cref{fig:GLRT-performance-profile}.
The AR$3$ experiments use \textsf{AR$3$-Interp\textsuperscript{+}} with the default subproblem solver \textsf{AR$2$-Simple} and \cref{eqn:subproblem_relative_tc} with $\theta = 100$.
Only the subproblem solver inside \textsf{AR$2$-Simple} and the accuracy to which its subproblem is solved varies.
The termination condition for the AR$2$ subproblem depends on the gradient of the AR$2$ model, which is denoted as $\tilde{m}_k$ in the legend of \cref{fig:GLRT-performance-profile}.
Note that for these experiments the algorithm does not operate in a Hessian- or tensor-free way even when using \textsf{GLRT}.
Otherwise, the different derivative oracles would make the comparison of the number of derivative evaluations significantly more complicated.
One can see that the overall performance stays essentially the same when solving the AR$2$ subproblems to the same absolute tolerance.
Only when allowing more inaccurate solves the subproblem solutions differ significantly and more iterations are necessary when using \textsf{GLRT}.
Therefore, we show that \textsf{GLRT} is able to provide similarly accurate solutions to the subsubproblems inside AR$3$ and can be used to apply third-order algorithms to large-scale problems.

\begin{sidewaysfigure}
    \centering
    \subimport{plots/GLRT}{rosenbrock_time_hvp_chol_vs_dim_log2_innertheta10.pgf}

    \caption{
    The leftmost plot shows the ``wall-clock'' time of using AR$1$, AR$2$ and AR$3$ on the multidimensional Rosenbrock problem as the dimension grows. For AR$2$ and AR$3$ the subproblem solvers \textsf{MCM} (with explicit Hessians and third derivatives) and \textsf{GLRT} (Hessian- and tensor-free) are used for solving the AR$2$ subproblems. They are employed directly for AR$2$ and within \textsf{AR$2$-Simple} for AR$3$. The plots in the center and on the right show the corresponding total number of Hessian–vector products for \textsf{GLRT} and the total number of Cholesky factorizations for \textsf{MCM}. Note that the total number of tensor–vector–vector products is the same as the total number of Hessian–vector products for AR$3$. A special marker denotes that the running time exceeded $4$ hours and the corresponding algorithm failed to solve the problem. These points are connect with a partially transparent line, since their slope is not indicative of the real growth.
    }
    \label{fig:GLRT-time}
\end{sidewaysfigure}

For readers who are interested in the performance of our algorithm in the Hessian- and tensor-free setting, we provide an additional experiment using the multidimensional Rosenbrock function (see \cref{sec:multidimensional_rosenbrock} for its definition) as an example.
We investigate the scaling of wall-clock time and of metrics related to the computational effort of solving the subproblems as the dimension grows from $2^2$ to $2^{14}$ in \cref{fig:GLRT-time}.
The left plot of this figure presents a running time comparison between AR$1$, AR$2$ and AR$3$, where AR$2$ and AR$3$ are combined with \textsf{MCM} (explicit Hessian and tensor) and \textsf{GLRT} (Hessian- and tensor-free).
AR$1$ does not require an inner solver.
Since the dimension can be large, we adapt the parameter settings for this experiment as follows.
We terminate the algorithm once $||\nabla f(\xx_k) || \leq 10^{-6}$ is achieved. We replaced the $1,000$ iteration limit with a time limit of $4$ hours for the main algorithm, but kept it for the inner and inner-inner solvers.
All the best function values for the solved problems in both \cref{fig:GLRT-time} are less than $10^{-11}$, indicating convergence to the global minimizer.
For both AR$2$ and AR$3$, we use \cref{eqn:subproblem_relative_tc} as the inner termination condition with $\theta=10$.
At the inner-inner level of AR$3$, we allow for a larger inexactness.
For the AR$2$ model $\tilde{m}_k$ constructed from the derivatives of the AR$3$ model, the AR$2$ subproblem solver terminates when $|| \nabla \tilde{m}_k(\tilde{\ss}_k) || \leq \tilde{\theta} || \tilde{\ss_k} ||^2$ for $\tilde{\theta} = 10^{\log_2(d)/2}$.\footnote{Note that these parameter values were chosen through trial and error. We observed that a large Krylov subspace was occasionally necessary inside \textsf{GLRT} to solve the problems with $\theta=10$, but at the same time for too large $\theta$ the number of outer iterations became very high. For an additional experiment with a $d$-dependent $\theta$, see \cref{fig:GLRT-time-d-theta} in the appendix. These empirically chosen values were sufficient for robust performance in our tests and were not further fine-tuned, as parameter optimization is not the focus of this work.}

For experiments with dimensions $2^{12}$ and $2^{14}$, AR$3$ with \textsf{MCM} and explicit third derivatives exceeded the available memory on our machine, since third-order tensors of corresponding dimensions need roughly $512$GB and $32$TB of memory in double precision.
At the same time, AR$3$ with \textsf{GLRT} is able to solve these two large dimensional problems within the given time limit, unlike the AR$2$ variants.
As running time results can vary based on different hardware or implementations, we also provide plots of the total number of Hessian–vector products (which is equal to the total number of tensor–vector–vector products for AR$3$) for methods using \textsf{GLRT}, and the total number of Cholesky factorizations for methods using \textsf{MCM} in \cref{fig:GLRT-time}.
Overall, the results indicate that for both AR$2$ and AR$3$ using \textsf{MCM} the number of Cholesky decompositions grow roughly linear with dimension and the corresponding timings grow roughly like $d^{3}$.
For AR$3$ using \textsf{GLRT} we see that the increase in Hessian-vector products (or equivalently tensor-vector-vector products) is linear with respect to the dimension, whereas the line for AR$2$ combined with \textsf{GLRT} has a steeper slope corresponding to a growth of $d^2$, which explains why AR$3$ is able to converge faster than AR$2$ for larger dimensions.
For the multidimensional Rosenbrock function, even with the additional cost of the tensor-vector-vector products, the Hessian- and tensor-free third-order algorithm outperforms the second-order one on large-scale problems.

% \begin{table}[t]
% \centering
% \begin{tabular}{lcccc}
% \toprule
% $d$
% & \textsf{AR$1$}
% & \textsf{AR$2$-Interp+MCM}
% & \textsf{AR$2$-Interp+GLRT}
% & \textsf{AR$3$-Interp\textsuperscript{+}+MCM} \\
% \midrule
% $2^{10}$ & $\checkmark$ & $\checkmark$ & $\checkmark$ & $99.03$ \\
% $2^{12}$ & $1,880.43$      & $2,825.63$     & $842.51$      & $\times$      \\
% $2^{14}$ & $14,333.70$      & $16,183.89$      & $1,3098.48$      & $\times$      \\
% \bottomrule
% \end{tabular}
% \caption{
% \brown{Best $f$ values for those are not able to solve the multidimensional Rosenbrock problem at higher dimensions within the time limits. A ``$\checkmark$'' indicates the problem being solved successfully, while ``$\times$'' indicates the candidate not executable (due to a storage explosion). Note that the initial objective function value equals $d-1$ for multidimensional Rosenbrock with our default starting point in \cref{sec:multidimensional_rosenbrock}.
% }}
% \label{tab:rosenbrock_fail}
% \end{table}

We still emphasize, however, that the detailed choice and tuning of subproblem solvers is not the main focus of this work, and the comparison involving \textsf{GLRT} is mainly included to address concerns about storage and runtime costs for third-order methods.
A more systematic runtime-oriented study of inner and inner-inner solvers is left for future research.

\section{Conclusions}

In this paper, we introduced several practical variants within the general AR$p$ algorithmic framework for nonconvex unconstrained optimization problems. Our approach incorporates an adaptive interpolation-based regularization parameter updating strategy and a pre-rejection step control strategy. Specifically, we proposed an inexpensive heuristic for choosing a reasonable $\sigma_0$, and extended the interpolation-based updating strategy in \cite{gould2012updating} to cases where $p \geq 3$. Numerical results demonstrate the effectiveness and robustness of the proposed methods.
Next, we investigated the fundamental differences in AR$p$ subproblem solutions when $p \geq 3$ (compared to second-order variants) and introduced a novel pre-rejection module that efficiently rejects directionally transient minimizers of the subproblems. %which proved to be numerically efficient.
Furthermore, we examined the performance differences between absolute and relative termination conditions in the subproblems.
Numerical results show that the proposed pre-rejection framework significantly improves efficiency and provides a reliable guide on predicting steps leading to unsuccessful iterations before any function evaluations, with both absolute and relative termination conditions.
We then combined the best of our proposed strategies and conducted a benchmark experiment, which confirmed that our AR$3$ variants are more efficient and competitive than the AR$2$ variants in both function and derivative evaluations, while also offering improvements in implementation efficiency.
Lastly, addressing concerns about runtime and memory requirements for third-order algorithms, we detail how AR$3$ can be used in a Hessian- and tensor-free way and investigate the scaling of computational costs with dimension on the example of the multidimensional Rosenbrock function.
Additional detailed experimental results for each section are provided in the appendix.

Avoiding the computational burden of computing the exact third-order tensor within AR$3$ methods is an active field of investigation with the potential to improve the efficiency of these implementations. One such approach is to approximate the tensor term by simpler terms, such as in \cite{zhu2023cubic}, which uses a scalar approximation while solving the AR$3$ subproblem. Alternatively, in a similar vein with quasi-Newton methods, \cite{welzel2024approximating} uses $(p-1)$th-order information to approximate the $p$th-order tensor using secant-type formulas and minimal Frobenius norm updates. We delegate the numerical implementation of such extensions within our AR$3$ framework to future work.

\section*{Acknowledgements}
We thank the authors of \cite{birgin2020use}, for making their implementation of their AR$3$ code and of the MGH test set publicly available. We are also grateful to Nick Gould for his valuable comments and assistance with the MATLAB---Fortran interface. Finally, we extend our thanks to the three anonymous referees, including the technical referee, for their insightful questions and suggestions, which helped us to improve the paper.

\printbibliography

\appendix

\section{Test functions}
\label{sec:functions}

In the following, We describe the test functions we used to enhance the MGH test set, as well as the slalom and hairpin turn functions used in \cref{fig:slalom_hairpin}.
For all but the slalom and hairpin turn function we provide two implementations, one returning an explicit matrix for the Hessian and either an explicit tensor (explicit) or a tensor-vector function handle (tensor-free) for the third derivative and one returning matrix-vector and tensor-vector-vector function handles (Hessian-free).
As described in \cref{sec:implementation_details}, the latter only works in conjunction with \textsf{GLRT} as the inner-inner solver for AR$3$.
A summary of the test functions and their memory requirements is given in \cref{tab:storage_formats}.

\begin{table}
\centering
\caption{Memory requirements for evaluating Hessians and third derivatives (or their actions) for our implementations of the test functions used in this paper. A dash represents that we do not provide the corresponding implementation in our code.}
\label{tab:storage_formats}
\begin{tabular}{lccc}
\toprule
 & Explicit & Tensor-free & Hessian- \& tensor-free \\
\midrule
MGH test problems \cite{more1981testing}
  & $\mathcal{O}(d^2)$ / $\mathcal{O}(d^3)$
  & ---
  & --- \\
Reg. third-order polynomial (\ref{sec:reg_third_order})
  & ---
  & $\mathcal{O}(d^2)$ / $\mathcal{O}(d^2)$
  & $\mathcal{O}(d^2)$ / $\mathcal{O}(d^2)$\footnotemark \\
Multidimensional Rosenbrock (\ref{sec:multidimensional_rosenbrock})
  & $\mathcal{O}(d^2)$ / $\mathcal{O}(d^3)$
  & ---
  & $\mathcal{O}(d)$ / $\mathcal{O}(d)$ \\
Nonlinear least-squares (\ref{sec:nonlinear_least_squares})
  & $\mathcal{O}(d^2)$ / $\mathcal{O}(d^3)$
  & ---
  & $\mathcal{O}(d)$ / $\mathcal{O}(d)$\\
Slalom and hairpin turn (\ref{sec:slalom_hairpin})
  & $\mathcal{O}(d^2)$ / $\mathcal{O}(d^3)$
  & ---
  & --- \\
\bottomrule
\end{tabular}
\end{table}

\footnotetext{Unitary matrices of size $d \times d$ are stored according to our construction in \cref{sec:reg_third_order}.}

\subsection{Regularized third-order polynomials}
\label{sec:reg_third_order}
To evaluate the AR3 method on functions with a significant third-order component, we consider functions of the following form:
\begin{align}
	\label{eqn:regularized_cubic}
	f(\xx) = \bb^\T \xx + \frac{1}{2} \xx^\T \HH \xx + \frac{1}{6} \tns{T} [\xx]^3 + \norm{\xx}^c.
\end{align}
This formulation allows us to control the conditioning of both the Hessian and the third-order tensor term.
In our experiment, we set the regularization exponent to $ c=8 $ and the dimension to $d=100$.
The vector $ \bb \in \reals^d $ is randomly generated with entries drawn from a uniform distribution $\mathcal{U}[0,1]$.
For the construction of $\HH \in \reals^{d \times d}$ and $\tns{T} \in \reals^{d \times d \times d}$, we first generate unitary matrices $\UU=[\uu_1, \ldots \uu_d] \in \reals^{d \times d}$ and $\VV=[\vv_1, \ldots \vv_d] \in \reals^{d \times d}$, which are obtained from SVD decompositions of random matrices with entries independently drawn from the standard normal distribution $\mathcal{N}[0,1]$.
We then construct $\HH = \sum_{i=1}^{d} \lambda_i^{\HH} \uu_{i} \otimes \uu_{i} $, and $\tns{T} =\sum_{i=1}^{n} \lambda_i^{\tns{T}} \vv_{i} \otimes \vv_{i} \otimes \vv_{i} $ using the following settings for $1 \leq i \leq d$,
\begin{itemize}
	\item Both $\lambda_i^{\HH}$ and $\lambda_i^{\tns{T}}$ take $d$ values from \texttt{linspace(1,10,d)}, starting from $1$ and ending at $10$, with equal spacing.
	\item $\lambda_i^{\HH}$ values are taken from \texttt{logspace(-6,6,d)}, i.e., from $10^{-6}$ to $10^{6}$, equally spaced on a logarithmic scale.
	The order is then randomly permuted, and the values are multiplied by a random sign. $\lambda_i^{\tns{T}}$ values are taken from \texttt{linspace(1,10,d)} as before.
	\item $\lambda_i^{\HH}$ values are taken from \texttt{linspace(1,10,d)}. $\lambda_i^{\tns{T}}$ values are taken from \texttt{logspace(-6,2,d)}, after a random permutation and random sign changes.
	\item $\lambda_i^{\HH}$ and $\lambda_i^{\tns{T}}$ values are taken from \texttt{logspace(-6,6,d)} and \texttt{logspace(-6,2,d)} respectively, with random permutations and random sign changes applied to both.
\end{itemize}
With these settings, we obtain four test problems, including three that are ill-conditioned.
The default starting point for all four test problems is $\xx_0 = (1, \dots, 1)^\T$.

\subsection{Multidimensional Rosenbrock}\label{sec:multidimensional_rosenbrock}
The multidimensional Rosenbrock function, also known as (Rosenbrock's) valley or banana function, is defined as
\begin{align}
	\label{eqn:rosenbrock}
	f(\xx) = \sum_{i=1}^{d-1} \left[100(x_{i}^2 - x_{i+1})^2 + (x_{i}-1)^2\right]
\end{align}
for any dimension $d \geq 2$.
The global minimum is attained at $\xx^* = (1, \dots, 1)^\T$, where $f(\xx^*) = 0$.
The default starting point is $\xx_0 = (0, \dots, 0)^\T$.

\subsection{Nonlinear least-squares}\label{sec:nonlinear_least_squares}
We consider a binary classification problem using a nonlinear least-squares function~\cite{liu2022newtonmr} of the form
\begin{align}
	\label{eqn:NLS}
	f(\xx) = \frac{1}{n} \sum_{i=1}^{n}  \left(\psi(\aa_i, \xx) - b_i \right)^2, \quad \psi(\aa_i, \xx) = \frac{1}{1+e^{-\dotprod{\aa_i, \xx}}}
\end{align}
where the dataset $ \{\aa_i, b_i\}_{i=1}^n $ consists of feature vectors $ \aa_i \in \mathbb{R}^d, \; i=1,\ldots,n $ and the corresponding labels $ b_i \in \{0, 1\}, \; i=1,\ldots,n $.
We randomly generate $\aa_i$ from a uniform distribution $\mathcal{U}[0,1]$, and $b_i$ are set to zero or one with equal probability.
The default starting point is $\xx_0 = (0, \dots, 0)^\T$.

\subsection{Slalom and hairpin turn functions}
\label{sec:slalom_hairpin}
These two functions were constructed such that knowledge of the third derivative is crucial for fast convergence.
The idea is that in the direction of rapid decrease, the function behaves like a saddle point, where the first and second derivative are zero, but the third derivative is not.
In the other direction, the function only decreases very slowly.
Moreover, we want first- and second-order methods that follow the slow decrease direction to eventually converge to the same point as the fast third-order method. To achieve this, the function must compel the lower-order methods to take a large detour instead of the direct path that the third-order method \enquote*{sees}.
To accomplish these objectives, we define a piecewise function using a few helper functions.

Consider polynomials $q \colon \reals \to \reals$ that satisfy the following conditions:
\begin{equation}\label{eqn:slalom_helper_poly}
	q(0) = 0, \ q(1) = 1, \ q'(0) = q'(1) = 0, \ q''(0) = q''(1) = 0, \ q'''(0) = q'''(1) = z.
\end{equation}
The lowest-order family of such polynomials is given by
\begin{equation}
	q(x) = -20 x^7 + 70x^6 - 84x^5 + 35x^4 + \frac{z}{6} \paren{2x^7 - 7x^6 + 9x^5 - 5x^4 + x^3}.
\end{equation}
We define $h_1(x) = 6x^5 - 15x^4 + 10x^3$, which is the solution for $z=60$.
By \cref{eqn:slalom_helper_poly}, we can define a piecewise function that is three times continuously differentiable as
\begin{equation}
	h_2(x) = h_1(x + 0.5 - \floor{x + 0.5}) + \floor{x + 0.5} - 0.5,
\end{equation}
which is the component in the direction of fast decrease.
Whenever $x = n + 0.5$ for some integer $n$, the function has a saddle point with a nonzero third derivative.

Next we need to address the other direction.
We want to scale $h_2$ by another function such that first- and second-order methods remain at the saddle point in the $x$ direction and instead move along the $y$ direction.
We define $h_3(x) = -20 x^7 + 70x^6 - 84x^5 + 35x^4$, which is the solution of \cref{eqn:slalom_helper_poly} for $z = 0$.
Again, by \cref{eqn:slalom_helper_poly}, the following piecewise function is three times continuously differentiable:
\begin{equation}
	h_4(x, y) = \begin{cases}
		-h_3(2x + 2) (1 - y) + 1 & -1 \leq x < -0.5,     \\
		y                        & -0.5 \leq x \leq 0.5, \\
		h_3(2x - 1) (1 - y) + y  & 0.5 < x \leq 1.
	\end{cases}
\end{equation}
This function continuously interpolates between $h_4(x, y) = y$ for $x \in [-0.5, 0.5]$ and $h_4(x, y) = 1$ for $x \in \{-1, 1\}$.

The two components can then be combined into
\begin{equation}\label{eqn:slalom_step_function}
	g(x, y) = h_2(x) h_4 \paren*{x, \frac{2}{1 + e^y}},
\end{equation}
which serves as the main building block for this construction.
The region where $(x, y) \in [-1, 1] \times [0, \infty)$ resembles a hairpin turn in the mountains, except that the actual turn is stretched out to infinity.
As $y$ increases, the \enquote*{streets} at $x = 0.5$ and $x = -0.5$ level out at zero, with one decreasing from $0.5$ and the other one increasing from $-0.5$.

To ensure that the minimizer of this function is close to $(-0.5, 0)$, we bend the function upwards with a fourth-order barrier term.
Additionally, we introduce a small slope $r > 0$ in the $x$ direction so that instead of terminating, our algorithm follows the slight downward slope on the flat part of the function and finds the minimizer. In the plot shown in \cref{fig:slalom_hairpin}, the value of $r$ is $3 * 10^{-4}$.
This results in what we refer to as the \enquote*{hairpin turn} function.
\begin{equation}
	b(x, x_{\min}, x_{\max}) = \begin{cases}
		(x - x_{\min})^4 & x \leq x_{\min},         \\
		0                & x_{\min} < x < x_{\max}, \\
		(x - x_{\max})^4 & x \geq x_{\max},
	\end{cases}
\end{equation}
\begin{equation}
	\label{eqn:hairpin_function}
	f_{\text{hairpin}}(x, y) = g(x, y) + r x + 50 b(x, -0.4, 0.5) + 50 b(y, 0, 5).
\end{equation}

Next, let us describe the \enquote*{slalom} function.
It is constructed by stitching together copies of the function in \cref{eqn:slalom_step_function} together at $y = 0$ and periodically in the $x$ direction, ensuring that the new function is continuous and that the downhill slopes on one side match with those on the other. The slalom function is defined as:
\begin{equation}\label{eqn:slalom_function}
	f_{\text{slalom}}(x, y) = rx + \begin{cases}
		g(x + 1 - 2\floor{\frac{x+1}{2}}, y) + 2\floor{\frac{x+1}{2}} & y \leq 0 \\
		g(x - 2\floor{\frac{x}{2}} - 1, y) + 2\floor{\frac{x}{2}} + 1 & y \geq 0
	\end{cases}
\end{equation}
Again, a slight slope in the $x$ direction is added to prevent convergence to saddle points.
Note that this function is three times continuously differentiable everywhere except along the $y = 0$ line, where there is a discontinuity in the first derivative.

\section{Theorem Proofs}
\label{sec:theorem_proofs}

\subsection{\texorpdfstring{Proof of \cref{thm:persistent_minimizers_local}}{Proof of Theorem 1}}
\label{sec:thm_prf_persistent_minimizers_local}
\begin{proof}
	Recall that the model has the form $m_{\sigma}(\ss) = t(\ss) + \frac{\sigma}{p+1} \norm{\ss}^{p+1}$, which means any stationary point $\ss$ satisfies
	\begin{equation}\label{eqn:persistent_minimizers_stationary}
		\nabla m_{\sigma}(\ss) = \nabla t(\ss) + \sigma \norm{\ss}^{p-1} \ss = \vek{0}.
	\end{equation}
	Rearranging and taking norm gives
	\begin{equation}
		\sigma = \frac{\norm{\nabla t(\ss)}}{\norm{\ss}^p} \eqqcolon \tilde{\sigma}(\ss)
	\end{equation}
	for $\sigma \geq 0$.
	This defines a function $\tilde{\sigma} \colon \R^d \setminus \{\vek{0}\} \to \R_{\geq 0}$ which satisfies $\tilde{\sigma}(\vek{\psi}(\sigma)) = \sigma$ for all $\sigma \in I$ for any minimizer curve $\vek{\psi}$.

	To show that any persistent minimizer curve converges to $\vek{0}$, we can bound $\tilde{\sigma}$ by
	\begin{equation}
		\tilde{\sigma}(\ss) = \frac{\norm{\nabla t(\ss)}}{\norm{\ss}^p} \leq \max\{C_1 \norm{\ss}^{-1}, C_2 \norm{\ss}^{-p}\}
	\end{equation}
	using the fact that $t$ is a $p$th-order polynomial, which implies that $\norm{\nabla t(\ss)} / \norm{\ss}^{p-1}$ is uniformly upper bounded by some constant $C_1 < \infty$ when $\ss$ is bounded away from zero and $\norm{\nabla t(\ss)}$ is uniformly upper bounded by some constant $C_2 < \infty$ when $\ss$ is close to zero.
	Therefore,
	\begin{equation}
		\norm{\vek{\psi}(\sigma)} \leq \max\{ C_1 \tilde{\sigma}(\vek{\psi}(\sigma))^{-1}, C_2^{1/p} \tilde{\sigma}(\vek{\psi}(\sigma))^{-1/p} \} = \max\{ C_1 \sigma^{-1}, C_2^{1/p} \sigma^{-1/p} \} \to 0
	\end{equation}
	as $\sigma \to \infty$ for a persistent minimizer curve $\vek{\psi}$.

	Clearly, if $\vek{\psi}$ is persistent and converges to $\vek{0}$, then it also satisfies $\inf_{\sigma \in I} \norm{\vek{\psi}(\sigma)} = 0$.
	We only need to show that this latter condition in turn implies persistency.
	The fact that $\norm{\vek{\psi}(\sigma)}$ is not bounded away from zero means there is a sequence $\sigma_1, \sigma_2, \ldots \in I$ such that $\norm{\vek{\psi}(\sigma_k)} \to 0$ as $k \to \infty$.
	We have
	\begin{equation}
		\sigma_k = \tilde{\sigma}(\vek{\psi}(\sigma_k)) = \frac{\norm{\nabla t(\vek{\psi}(\sigma_k))}}{\norm{\vek{\psi}(\sigma_k)}^p} \to \infty
	\end{equation}
	since the numerator converges to $\norm{\nabla t(\vek{0})} > 0$ and the denominator converges to zero.
	Therefore, $\vek{\psi}$ must be persistent.
\end{proof}

\subsection{\texorpdfstring{Proof of \cref{thm:ar2_global_min_persistent}}{Proof of Theorem 2}}
\label{sec:thm_prf_ar2_global_min_persistent}
\begin{proof}
	Without loss of generality, we can assume that
	\begin{equation}
		m_{\sigma}(\ss) = \gg^\T \ss + \frac{1}{2} \ss^\T \HH \ss + \frac{\sigma}{3} \norm{\ss}^3
	\end{equation}
	for some $\gg \in \R^n \setminus \{\vek{0}\}$ and a symmetric $\HH \in \R^{n \times n}$.
	According to \cite[Theorem 8.2.8]{cartis2022evaluation}, any global minimizer $\ss_*$ of this model satisfies
	\begin{equation}\label{eqn:ar2_global_min}
		(\HH + \lambda_* \eye) \ss_* = - \gg
	\end{equation}
	where $\HH + \lambda_* \eye \succeq \mat{0}$ and $\lambda_* = \sigma \norm{\ss_*}$.
	By the positive semidefiniteness of the matrix and the nonnegativity of $\sigma$, we know that $\lambda_* \geq \max\{0, -\lambda_{\min}(\HH)\} \eqqcolon \lambda_s$.

	First, consider the case where $\lambda_* > -\lambda_{\min}(\HH)$.
	Using the fact that the matrix in \cref{eqn:ar2_global_min} is nonsingular and that $\gg \neq \vek{0}$, we can define
	\begin{equation}
		\ss_1(\lambda) = -(\HH + \lambda \eye)^{-1} \gg \quad \text{and} \quad \sigma_1(\lambda) = \lambda / \norm{\ss_1(\lambda)}
	\end{equation}
	for all $\lambda > -\lambda_{\min}(\HH)$.
	Note that according to \cite[Theorem 8.2.3]{cartis2022evaluation}, we know that $\pi(\lambda) = \norm{\ss_1(\lambda)}^{-1}$ has a nonnegative derivative whenever $\lambda > \lambda_s$.
	Therefore,
	\begin{equation}
		\sigma_1'(\lambda) = \pi(\lambda) + \lambda \pi'(\lambda) > 0
	\end{equation}
	which means that $\sigma_1(\lambda)$ is strictly monotonically increasing on $[\lambda_*, \infty)$.
	Moreover, $\sigma_1(\lambda)$ is continuous and converges to infinity as $\lambda$ approaches infinity.
	This implies that there exists a $c \geq 0$ such that $\sigma_1 \colon [\lambda_*, \infty) \to [c, \infty)$ is one-to-one, and that $\vek{\psi}_1$, defined by
	\begin{equation}\label{eqn:ar2_minimizer_curve_not_hard_case}
		\vek{\psi}_1(\sigma_1(\lambda)) = \ss_1(\lambda) \text{ for all } \lambda \geq \lambda_*,
	\end{equation}
	is part of a persistent minimizer curve which contains $\ss_* = \ss_1(\lambda_*)$.

	Next, consider the case where $\lambda_* = -\lambda_{\min}(\HH)$.
	In this case $\HH + \lambda_* \eye$ is singular.
	Let $V$ be the nullspace of $\HH + \lambda_* \eye$ and $V_{\perp}$ its orthogonal complement.
	We can decompose $\ss_*$ into a sum $\vv + \ww$ where $\vv \in V$ and $\ww \in V_{\perp}$.
	Assume for now that $\vv \neq \vek{0}$, the other case will be handled later.
	Since $\gg \neq \vek{0}$, we also have $\ww \neq \vek{0}$, and so
	\begin{equation}
		\ss_2(\alpha) = \alpha \vv + \ww \quad \text{and} \quad \sigma_2(\alpha) = \lambda_* / \norm{\ss_2(\alpha)}
	\end{equation}
	are well-defined.
	Clearly, $\norm{\ss_2(\alpha)}$ is strictly monotonically increasing, and $\sigma_2(\alpha)$ is strictly monotonically decreasing.
	Therefore, there exists a $c \geq 0$ such that $\sigma_2 \colon [0, \infty) \to (0, c]$ is one-to-one, and $\vek{\psi}_2$, defined by
	\begin{equation}
		\vek{\psi}_2(\sigma_2(\alpha)) = \ss_2(\alpha) \text{ for all } \alpha \geq 0,
	\end{equation}
	forms part of a minimizer curve which contains $\ss_* = \ss_2(1)$.

	With the given construction, $\vek{\psi}_2$ is not persistent, because it is only defined on $\sigma \in (0, c]$.
	However, it is possible to extend $\vek{\psi}_2$ by composing it with $\vek{\psi}_1$.
	Note that since $-\lambda_{\min}(\HH) = \lambda_* \geq 0$, the construction of $\vek{\psi}_1$ in \cref{eqn:ar2_minimizer_curve_not_hard_case} extends to all $\lambda > \lambda_s = \lambda_*$, and $\ss_1(\lambda)$ can be equivalently defined as $-(\HH + \lambda \eye)^\dagger \gg$, where $(\cdot)^\dagger$ denotes the pseudoinverse of a matrix.
	Therefore,
	\begin{equation}\label{eqn:ar2_minimizer_curve_limit}
		\lim_{\lambda \to \lambda_*} \ss_1(\lambda) = -(\HH - \lambda_* \eye)^\dagger \gg = \ss_2(0) \quad \text{and} \quad \lim_{\lambda \to \lambda_*} \sigma_1(\lambda) = \lambda_* / \norm{\ss_2(0)} = \sigma_2(0).
	\end{equation}
	This implies that the curve defined by following $\vek{\psi}_2$ for $\sigma \in (0, \sigma_2(0)]$ and $\vek{\psi}_1$ for $(\sigma_2(0), \infty)$ forms (part of) a persistent minimizer curve that contains $\ss_*$.

	To finish the proof, we need to consider the case $\lambda_* = -\lambda_{\min}(\HH)$ and $\vv = \vek{0}$ that was skipped before.
	This case corresponds exactly to $\ss_* = -(\HH - \lambda_* \eye)^\dagger \gg$.
	Therefore, in this scenario, it suffices to extend $\vek{\psi}_1$ by using the limit points of $\sigma_1$ and $\ss_1$ as determined in \cref{eqn:ar2_minimizer_curve_limit}, namely map $\lambda_* / \norm{\ss_*}$ to $\ss_*$.
	The extended curve is continuous as shown above and (part of) a persistent minimizer curve.
\end{proof}

\subsection{\texorpdfstring{Proof of \cref{thm:0_persistent}}{Proof of Corollary 1}}
\label{sec:thm_prf_0_persistent}
\begin{proof}
	In addition to the assumptions in the statement, assume there exists a bounded and $0$-persistent minimizer curve $\vek{\psi}$.
	This implies $\vek{\psi} \colon [0, \infty) \to (0, \bar{\alpha}]$ or $\vek{\psi} \colon (0, \infty) \to (0, \bar{\alpha})$.
	In both cases, we know from \cref{eqn:regularization_first_order} that
	\begin{equation}
		\frac{t'(\bar{\alpha})}{\bar{\alpha}^p} = \lim_{\sigma \to 0} \frac{-t'(\vek{\psi}(\sigma))}{\vek{\psi}(\sigma)^p} = \lim_{\sigma \to 0} \sigma = 0.
	\end{equation}

	Now consider the case where $t'(\bar{\alpha}) = 0$.
	Following the same construction as in the proof of \cref{thm:one_dimensional_persistent_minimizers}, we see that $\lim_{\alpha \to \bar{\alpha}} \sigma(\alpha) = 0$, and therefore, $\vek{\psi}$ is defined for all $\alpha \in (0, \infty)$, making it $0$-persistent.
	The boundedness of $\vek{\psi}$ follows from the existence of $\bar{\alpha}$.
\end{proof}

\section{Detailed performance profile plots}
\label{sec:additional_experiments}
We provide additional performance profile plots for the experiments from each section of the main paper.
For a description of the performance profile plots and the meaning of $\eps_f$ and $\tau$, see \cref{sec:performance_profile}.

\subsection{\texorpdfstring{Choosing the initial regularization parameter $\sigma_0$}{Choosing the initial regularization parameter sigma0}}
\begin{figure}[H]
	\centering
	\subimport{plots/sigma0}{performance_profile-First_Order-tau-f.pgf}
	\caption{Additional performance profile plots are provided to show the impact of using different $\sigma_0$ values with respect to different $\eps_f$.
    }
	\label{fig:sigma0-convergence-full-tau}
\end{figure}
\begin{figure}[H]
	\centering
	\subimport{plots/sigma0}{performance_profile-First_Order-eps-f.pgf}
	\caption{Additional performance profile plots are provided to show the impact of using different $\sigma_0$ values with respect to different $\tau$.
    }
	\label{fig:sigma0-convergence-full-eps}
\end{figure}

\subsection{\texorpdfstring{An interpolation-based update for AR$p$}{An interpolation-based update for ARp}}
\begin{figure}[H]
	\centering
	\subimport{plots/interpolation}{performance_profile-First_Order-tau-f.pgf}
	\caption{The performance profile plot illustrates the effectiveness of the interpolation-based strategy with respect to different $\eps_f$.}
	\label{fig:interpolation-convergence-full-tau}
\end{figure}
\begin{figure}[H]
	\centering
	\subimport{plots/interpolation}{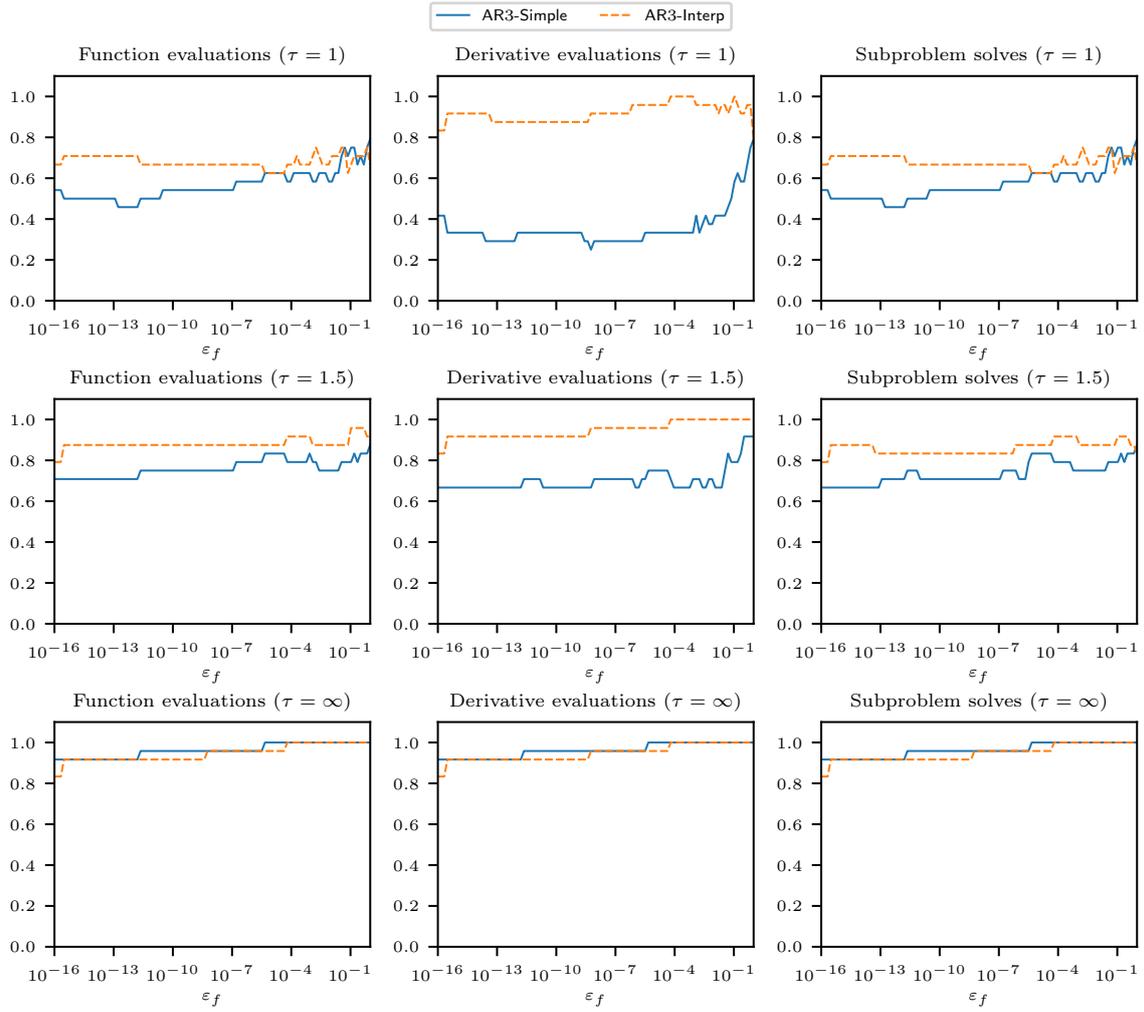}
	\caption{The performance profile plot illustrates the effectiveness of the interpolation-based strategy with respect to different $\tau$.}
	\label{fig:interpolation-convergence-full-eps}
\end{figure}

\subsection{A pre-rejection framework}
\begin{figure}[H]
	\centering
	\subimport{plots/prerejection}{performance_profile-First_Order-tau-f.pgf}
	\caption{The performance profile plot illustrates the impact of including pre-rejection with respect to different $\eps_f$.}
	\label{fig:prerejection-convergence-full-tau}
\end{figure}
\begin{figure}[H]
	\centering
	\subimport{plots/prerejection}{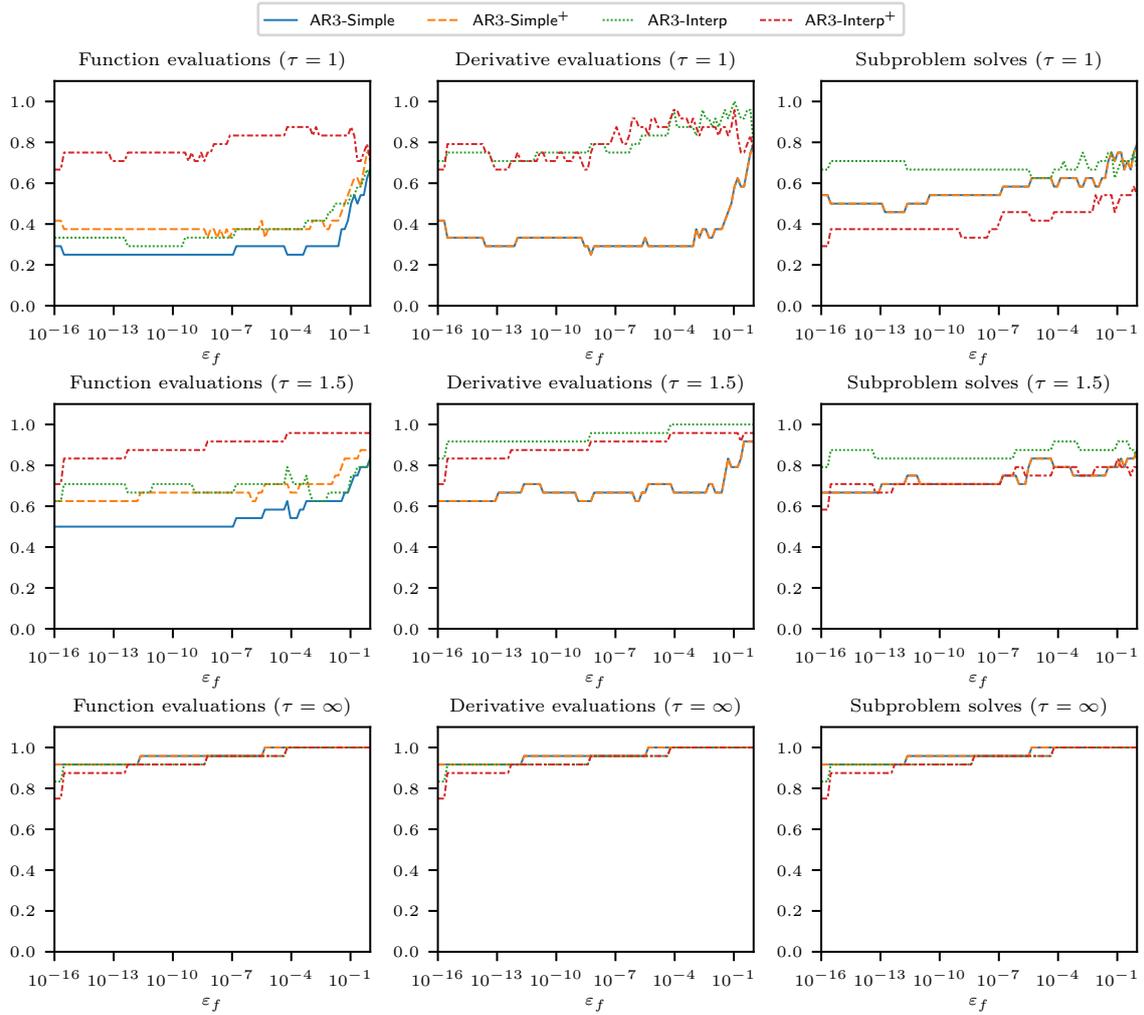}
	\caption{The performance profile plot illustrates the impact of including pre-rejection with respect to different $\tau$.}
	\label{fig:prerejection-convergence-full-eps}
\end{figure}

\subsection{Comparison with BGMS step control strategy}
\begin{figure}[H]
	\centering
	\subimport{plots/updates}{performance_profile-First_Order-tau-f.pgf}
	\caption{The performance profile plot highlights the differences between the three update strategies with respect to different $\eps_f$.
		Note that pre-rejection is enabled for all three methods.}
	\label{fig:updates-convergence-full-tau}
\end{figure}
\begin{figure}[H]
	\centering
	\subimport{plots/updates}{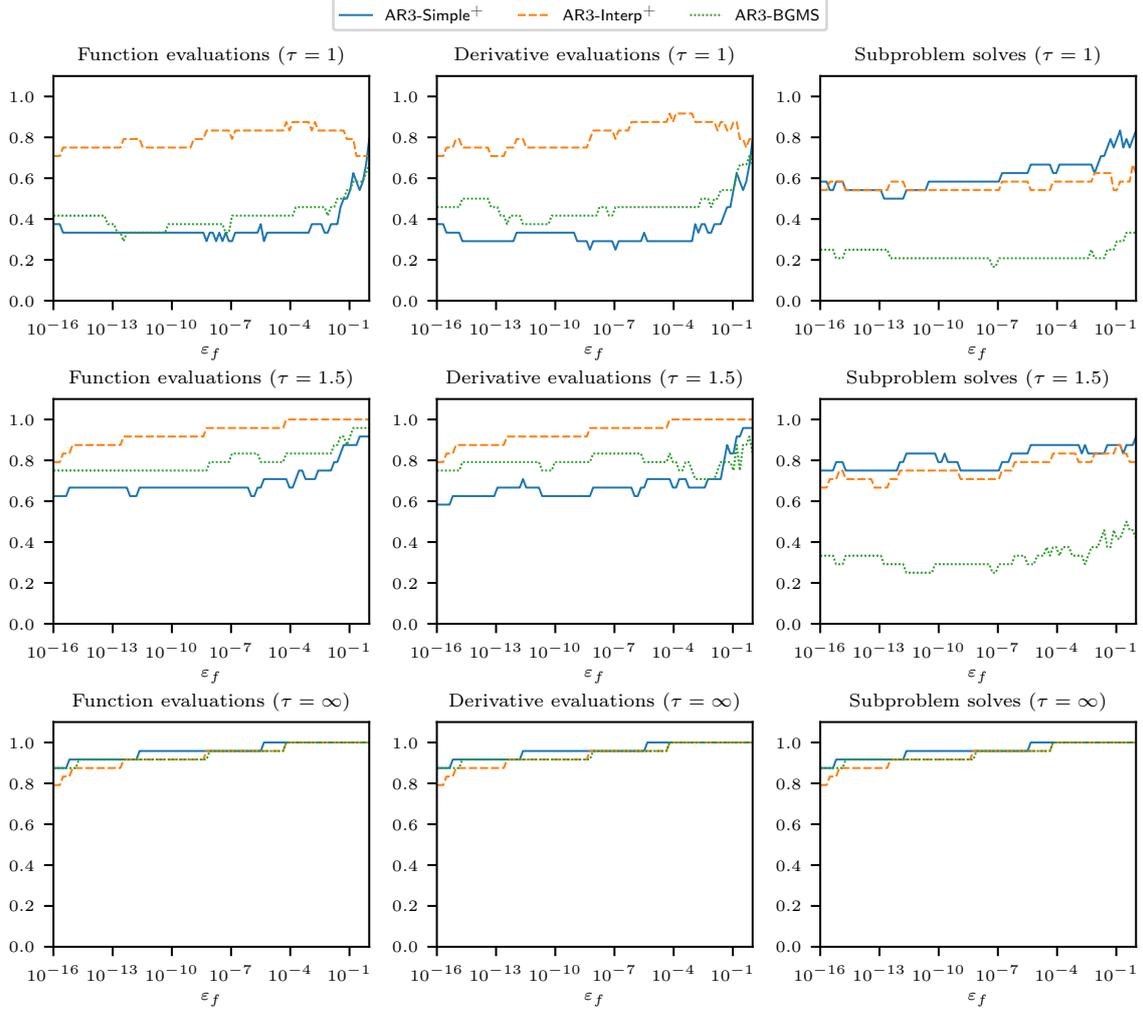}
	\caption{The performance profile plot highlights the differences between the three update strategies with respect to different $\tau$.
		Note that pre-rejection is enabled for all three methods.}
	\label{fig:updates-convergence-full-eps}
\end{figure}

\subsection{Subproblem termination condition}
\begin{figure}[H]
	\centering
	\subimport{plots/theta}{performance_profile-Interpolation_m-3-tau-f.pgf}
	\caption{The performance profile plot showing the differences between $\theta \in \{10^{-2}, 10^{0}, 10^{2}, 10^{4}\}$ in \cref{eqn:subproblem_relative_tc} with $p = 3$ with respect to different $\eps_f$.}
	\label{fig:theta-convergence-full-tau}
\end{figure}
\begin{figure}[H]
	\centering
	\subimport{plots/theta}{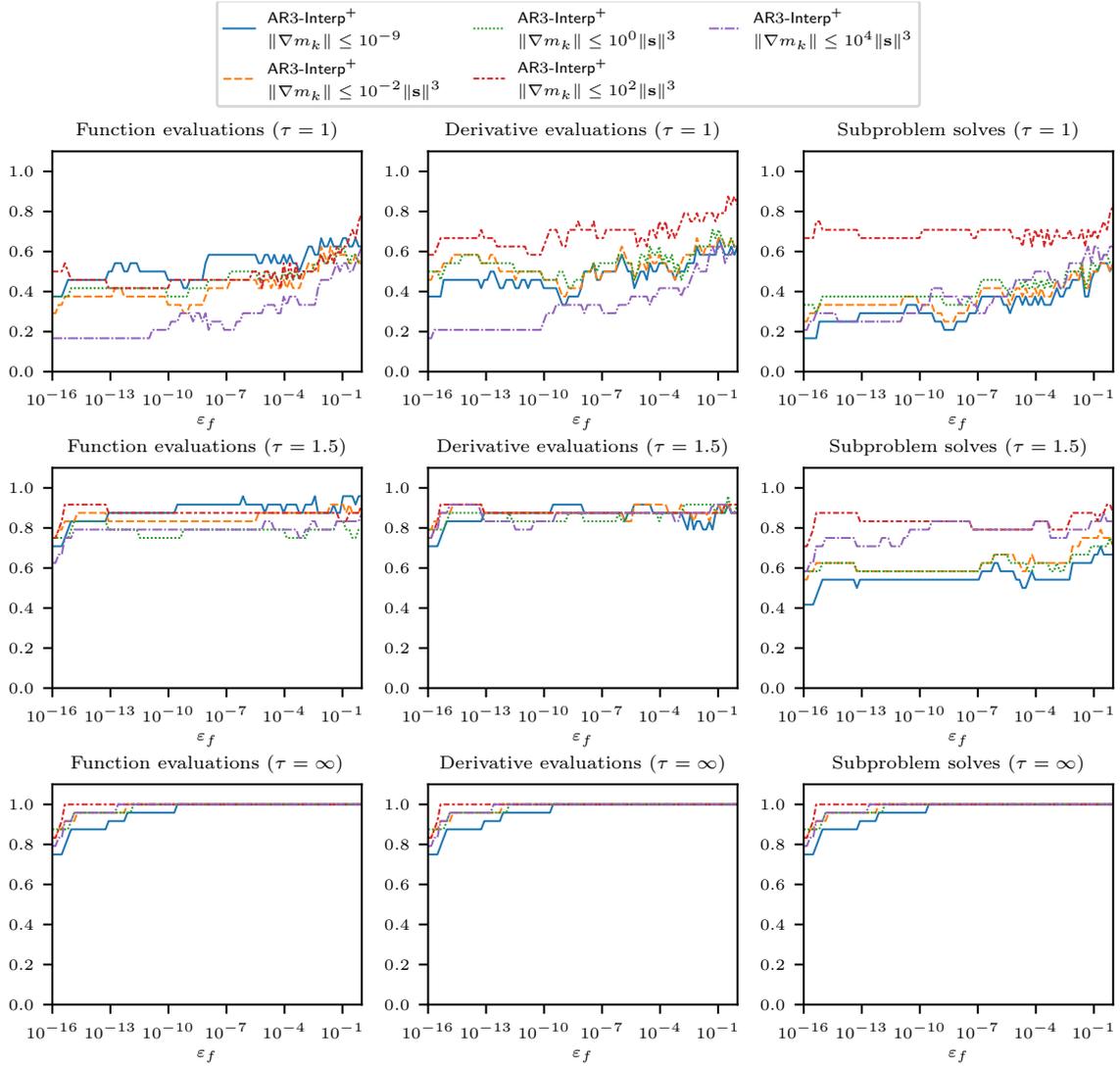}
	\caption{The performance profile plot showing the differences between $\theta \in \{10^{-2}, 10^{0}, 10^{2}, 10^{4}\}$ in \cref{eqn:subproblem_relative_tc} with $p = 3$ with respect to different $\tau$.}
	\label{fig:theta-convergence-full-eps}
\end{figure}

\subsection{Benchmarking}
\begin{figure}[H]
	\centering
	\subimport{plots/benchmark}{performance_profile-tau-f.pgf}
	\caption{The performance profile plot compares \textsf{AR$2$-Interp}, \textsf{AR$3$-Interp\textsuperscript{+}}, \textsf{AR$3$-BGMS} and \textsf{AR$3$-BGMS/Gencan} using both \cref{eqn:subproblem_absolute_tc} and \cref{eqn:subproblem_relative_tc} on full $35$ MGH test set with respect to different $\eps_f$.}
	\label{fig:benchmark-convergence-full-tau}
\end{figure}
\begin{figure}[H]
	\centering
	\subimport{plots/benchmark}{performance_profile-eps-f.pgf}
	\caption{The performance profile plot compares \textsf{AR$2$-Interp}, \textsf{AR$3$-Interp\textsuperscript{+}}, \textsf{AR$3$-BGMS} and \textsf{AR$3$-BGMS/Gencan} using both \cref{eqn:subproblem_absolute_tc} and \cref{eqn:subproblem_relative_tc} on full $35$ MGH test set with respect to different $\tau$.}
	\label{fig:benchmark-convergence-full-eps}
\end{figure}

\subsection{Hessian and Tensor Free Implementation}
\begin{figure}[H]
	\centering
	\subimport{plots/GLRT}{performance_profile-p23-tau-f.pgf}
	\caption{The performance profile plot compares all combinations of \textsf{AR$2$-Interp} and \textsf{AR$3$-Interp\textsuperscript{+}} with \textsf{MCM} and \textsf{GLRT} as AR$2$ subproblem solvers using both \cref{eqn:subproblem_absolute_tc} and \cref{eqn:subproblem_relative_tc} conditions for terminating the AR$2$ subproblem on the full $35$ MGH test set with respect to different $\eps_f$.
    For \textsf{AR$3$-Interp\textsuperscript{+}}, $\tilde{m}_k$ refers to the AR2 model constructed while minimizing the AR3 model.}
	\label{fig:glrt-convergence-full-tau}
\end{figure}
\begin{figure}[H]
	\centering
	\subimport{plots/GLRT}{performance_profile-p23-eps-f.pgf}
	\caption{The performance profile plot compares all combinations of \textsf{AR$2$-Interp} and \textsf{AR$3$-Interp\textsuperscript{+}} with \textsf{MCM} and \textsf{GLRT} as AR$2$ subproblem solvers using both \cref{eqn:subproblem_absolute_tc} and \cref{eqn:subproblem_relative_tc} conditions for terminating the AR$2$ subproblem on the full $35$ MGH test set with respect to different $\tau$.
    For \textsf{AR$3$-Interp\textsuperscript{+}}, $\tilde{m}_k$ refers to the AR2 model constructed while minimizing the AR3 model.}
	\label{fig:glrt-convergence-full-eps}
\end{figure}

\begin{sidewaysfigure}
    \centering
    \subimport{plots/GLRT}{rosenbrock_time_hvp_chol_vs_dim_log2.pgf}

    \caption{
    The leftmost plot shows the ``wall-clock'' time of using AR$1$, AR$2$ and AR$3$ on the multidimensional Rosenbrock problem as the dimension grows. For AR$2$ and AR$3$ the subproblem solvers \textsf{MCM} (with explicit Hessians and third derivatives) and \textsf{GLRT} (Hessian- and tensor-free) are used for solving the AR$2$ subproblems. They are employed directly for AR$2$ and within \textsf{AR$2$-Simple} for AR$3$. The plots in the center and on the right show the corresponding total number of Hessian–vector products for \textsf{GLRT} and the total number of Cholesky factorizations for \textsf{MCM}. Note that the total number of tensor–vector–vector products is the same as the total number of Hessian–vector products for AR$3$. A special marker denotes that the running time exceeded $4$ hours and the corresponding algorithm failed to solve the problem. These points are connect with a partially transparent line, since their slope is not indicative of the real growth. The ``outlier'' for \textsf{AR$2$-Interp + GLRT} at dimension $2^{8}$ needs a surprising amount of time, but aligns very well with the other data points in terms of total Hessian–vector products. For both AR$2$ and AR$3$, we use \cref{eqn:subproblem_relative_tc} as the inner termination condition with $\theta = 10^{\log_2(d)/4}$ where $d$ is the dimension of the problem.
    }
    \label{fig:GLRT-time-d-theta}
\end{sidewaysfigure}

\section{\texorpdfstring{Relative termination condition for AR$2$}{Relative termination condition for AR2}}
\label{sec:inexactness_AR2}
In this section, we provide the numerical illustrations for \cref{sec:theta} for AR$2$.
From \cref{fig:theta-convergence-p-2,fig:theta-performance-profile-p-2} we can conclude that the impact of the termination condition on AR$2$ is limited, with \cref{eqn:subproblem_relative_tc} and $\theta=10^{-2}$ being among the best with respect to function evaluations, derivative evaluations and subproblem solves.

\begin{sidewaysfigure}
	\centering
	\subimport{plots/theta}{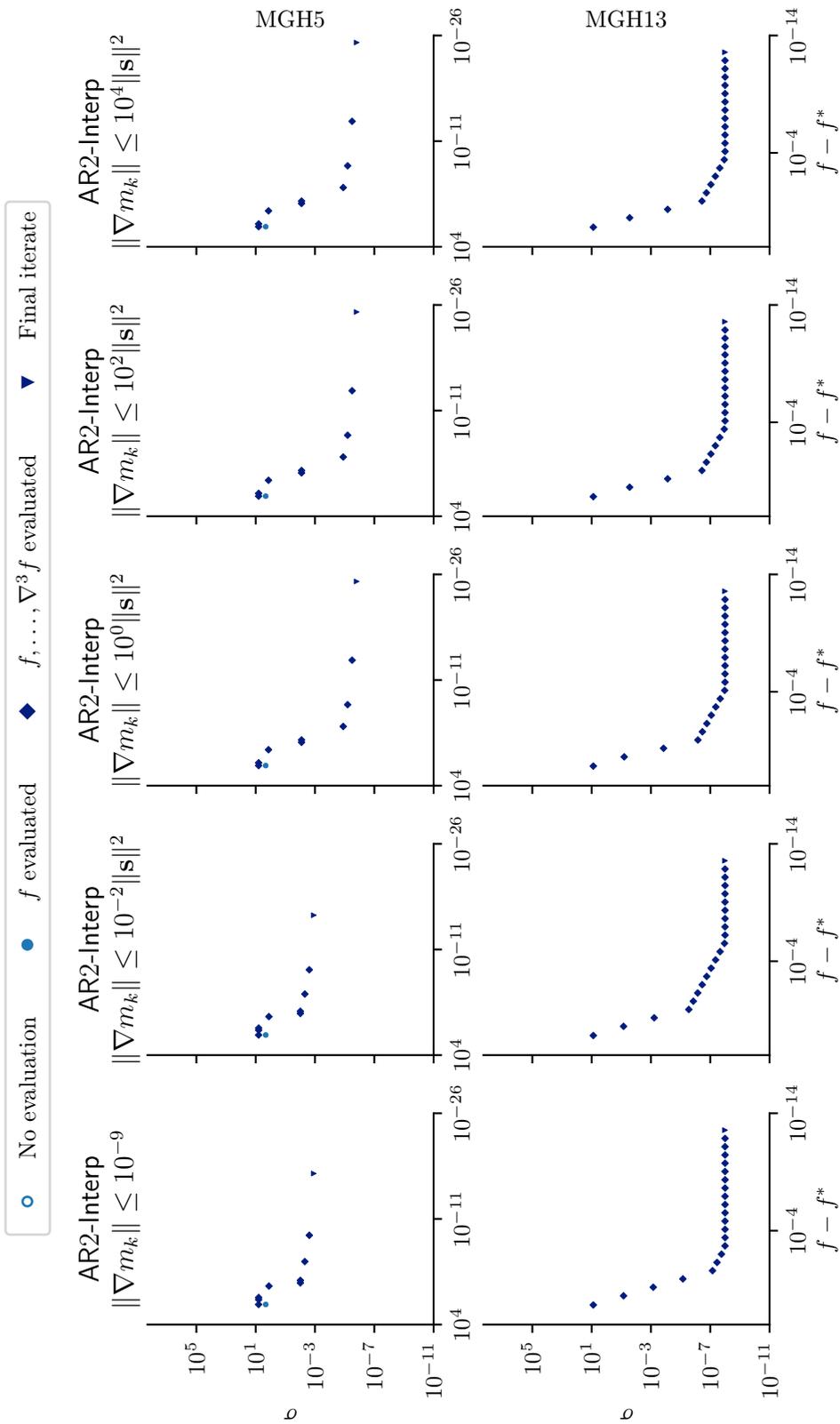}
	\caption{The convergence dot plot showing the differences between $\theta \in \{10^{-2}, 10^{0}, 10^{2}, 10^{4}\}$ in \cref{eqn:subproblem_relative_tc} for \textsf{AR$2$-Interp}. An extra condition \cref{eqn:subproblem_absolute_tc} is provided as an approximation of an exact solve.}
	\label{fig:theta-convergence-p-2}
\end{sidewaysfigure}

\begin{sidewaysfigure}
	\centering
	\subimport{plots/theta}{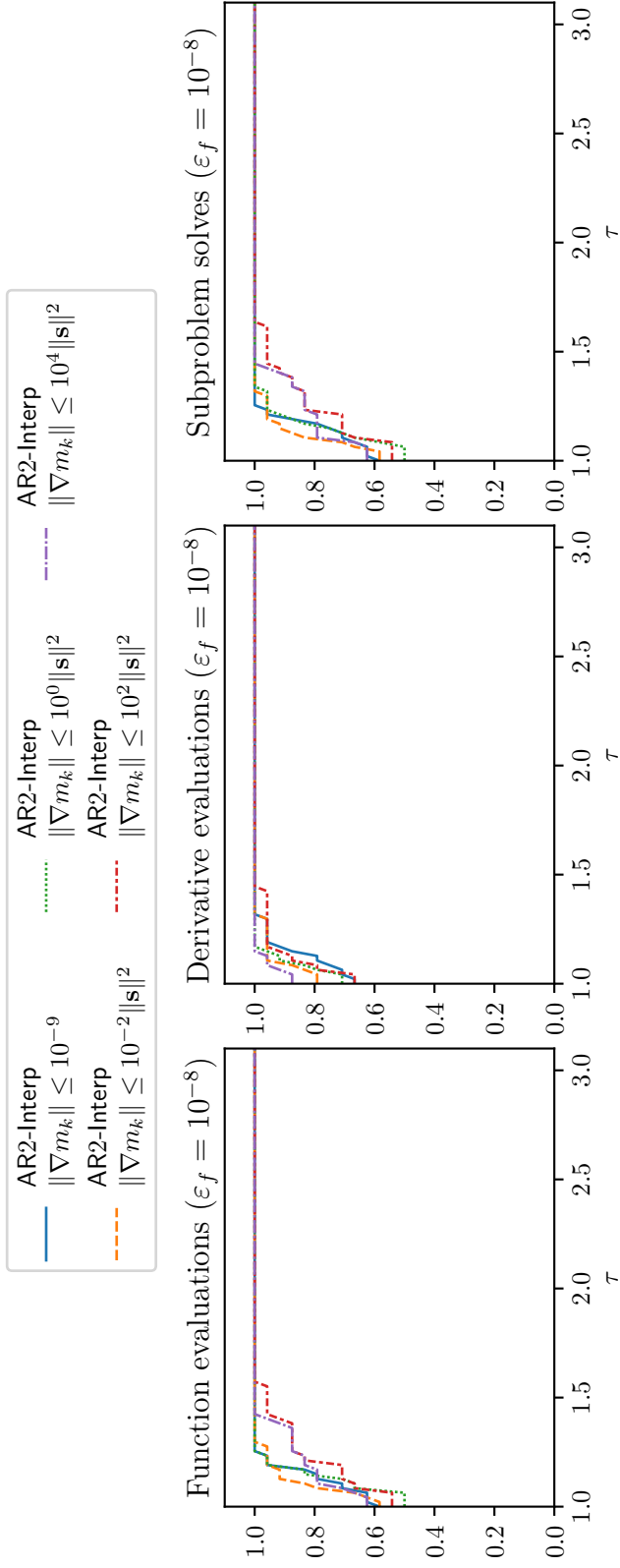}
	\caption{The performance profile plot showing the differences between $\theta \in \{10^{-2}, 10^{0}, 10^{2}, 10^{4}\}$ in \cref{eqn:subproblem_relative_tc} for \textsf{AR$2$-Interp}. An extra condition \cref{eqn:subproblem_absolute_tc} is provided as an approximation of an exact solve.}
	\label{fig:theta-performance-profile-p-2}
\end{sidewaysfigure}

\section{Generalized norm termination condition}
\label{sec:generalized_norm}
\begin{sidewaysfigure}
	\centering
	\subimport{plots/theta_GN}{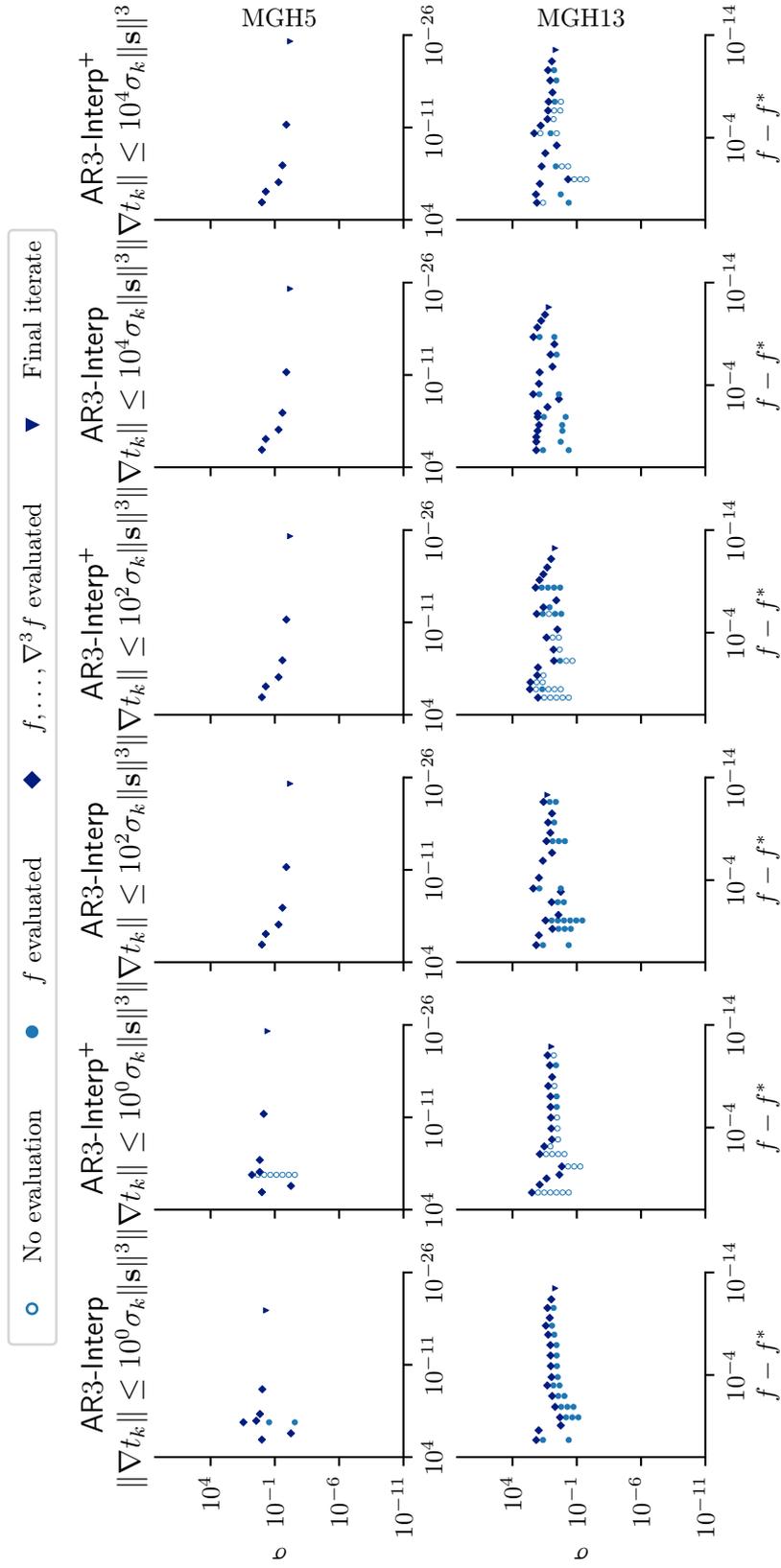}
	\caption{The convergence dot plot shows the differences between $\theta \in \{10^{0}, 10^{2}, 10^{4}\}$ in \cref{eqn:subproblem_general_norm_tc} for both \textsf{AR$3$-Interp} and \textsf{AR$3$-Interp\textsuperscript{+}}.}
	\label{fig:theta-gn-convergence}
\end{sidewaysfigure}

In this section, we consider the generalized norm termination condition \cite{gratton2023adaptive}, given by
\begin{align}
	\label{eqn:subproblem_general_norm_tc}
	\norm{\nabla t_k(\ss_k)} \leq \theta \sigma_k \norm{\ss_k}^p
	\tag{TC.g}
\end{align}
where $\theta \geq 1$.
We choose $\theta = \{1, 10^2, 10^4\}$.
It has been shown in \cite{gratton2023adaptive} that an exact solution to the subproblem satisfies \cref{eqn:subproblem_general_norm_tc} with $\theta=1$, however, the reverse is not necessarily true, which means that $\theta=1$ still represents an inexact condition.
From \cref{fig:theta-gn-convergence}, we observe similar performance compared to the results in \cref{fig:theta-convergence}.
The pre-rejection framework continues to work well with this termination condition.
When $\theta=1$, the performance of \cref{eqn:subproblem_general_norm_tc} is very similar to that shown in the leftmost subplot in \cref{fig:theta-convergence}.
However, when $\theta$ increases, no drastic improvements are observed for MGH13.

\begin{sidewaysfigure}
	\centering
	\subimport{plots/theta_GN}{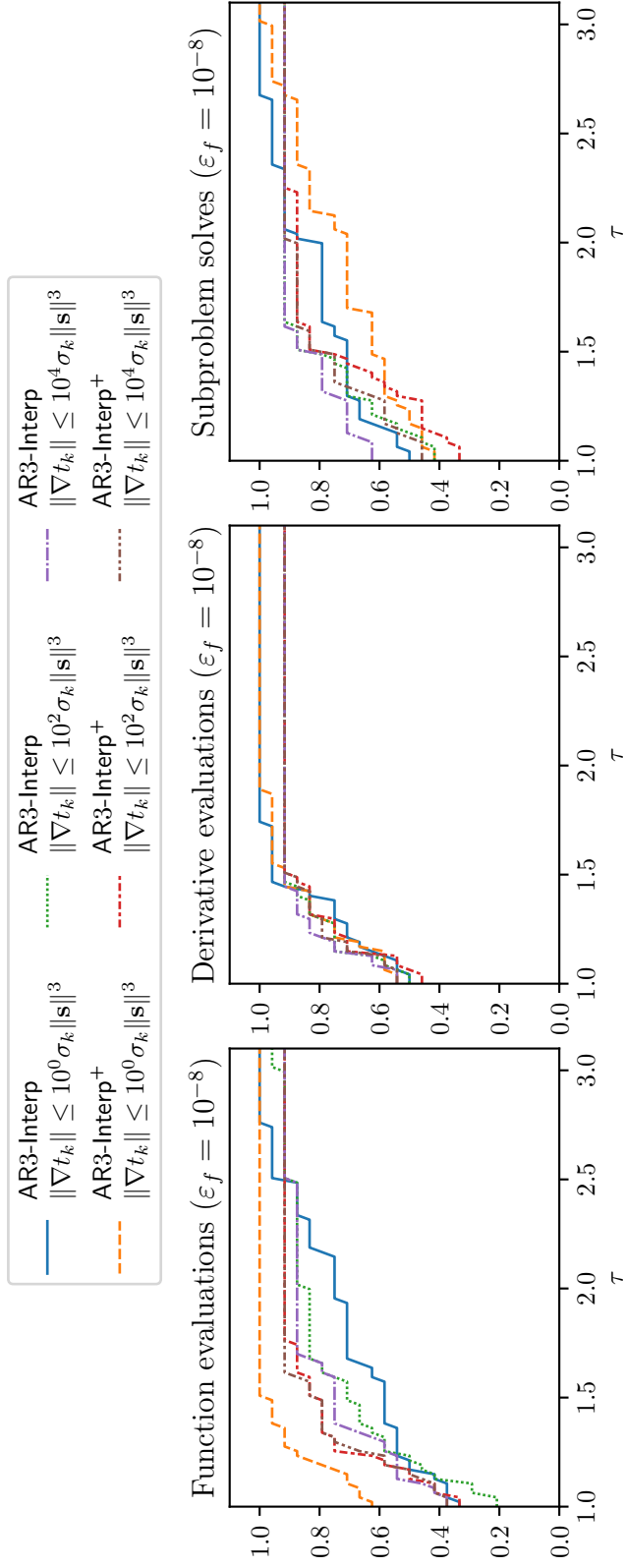}
	\caption{The performance profile plot shows the differences between $\theta \in \{10^{0}, 10^{2}, 10^{4}\}$ in \cref{eqn:subproblem_general_norm_tc} for both \textsf{AR$3$-Interp} and \textsf{AR$3$-Interp\textsuperscript{+}}.}
	\label{fig:theta-gn-performance-profile-interpolation}
\end{sidewaysfigure}

\cref{fig:theta-gn-performance-profile-interpolation} demonstrates the overall performance for different values of $\theta$. It is clear that the variant with $\theta = 1$ uses the least amount of function evaluations, whereas the variant with $\theta = 10^4$ requires the fewest subproblem solves.

\begin{sidewaysfigure}
	\centering
	\subimport{plots/theta_GN}{performance_profile-benchmark-3-tau-f-one-line.pgf}
	\caption{The performance profile benchmark plot shows the differences between \textsf{AR$3$-Simple\textsuperscript{+}}, \textsf{AR$3$-Interp\textsuperscript{+}}, and \textsf{AR$3$-BGMS} using three termination conditions: \cref{eqn:subproblem_absolute_tc} with $\eps_{\textrm{sub}} = 10^{-9}$, \cref{eqn:subproblem_relative_tc} with  $\theta=100$, and \cref{eqn:subproblem_general_norm_tc} with $\theta=1$. This experiment uses the full $35$ MGH test set.}
	\label{fig:theta-gn-performance-profile}
\end{sidewaysfigure}

Next, we maintain $\theta=1$ in \cref{eqn:subproblem_general_norm_tc} and provide the benchmark performance profile plot in \cref{fig:theta-gn-performance-profile} to determine the best combination of AR$3$ variant and termination condition.
All three \textsf{AR$3$-Interp} strategies outperform \textsf{AR$3$-Simple} and \textsf{AR$3$-BGMS} in both function and derivative evaluations. Furthermore, \textsf{AR$3$-Interp} using \cref{eqn:subproblem_relative_tc} with $\theta=100$ emerges as the most competitive candidate across all three measures.

\Cref{fig:theta-gn-convergence-p-2,fig:theta-gn-performance-profile-p-2-interpolation,fig:theta-gn-performance-profile-p-2} present the results of the same experiments as above for AR$2$.

\begin{figure}
	\centering
	\subimport{plots/theta_GN}{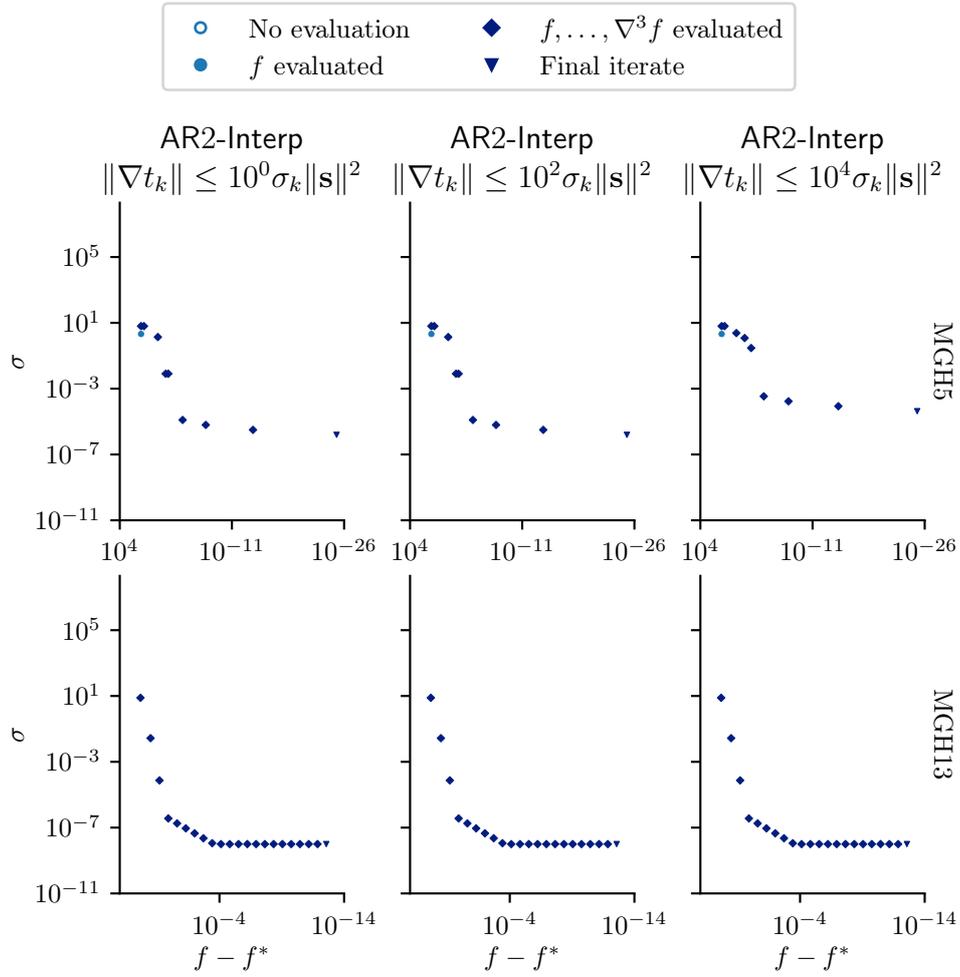}
	\caption{The convergence dot plot shows the differences between $\theta \in \{10^{0}, 10^{2}, 10^{4}\}$ in the subproblem termination condition \cref{eqn:subproblem_general_norm_tc} for \textsf{AR$2$-Interp}.}
	\label{fig:theta-gn-convergence-p-2}
\end{figure}

\begin{sidewaysfigure}
	\centering
	\subimport{plots/theta_GN}{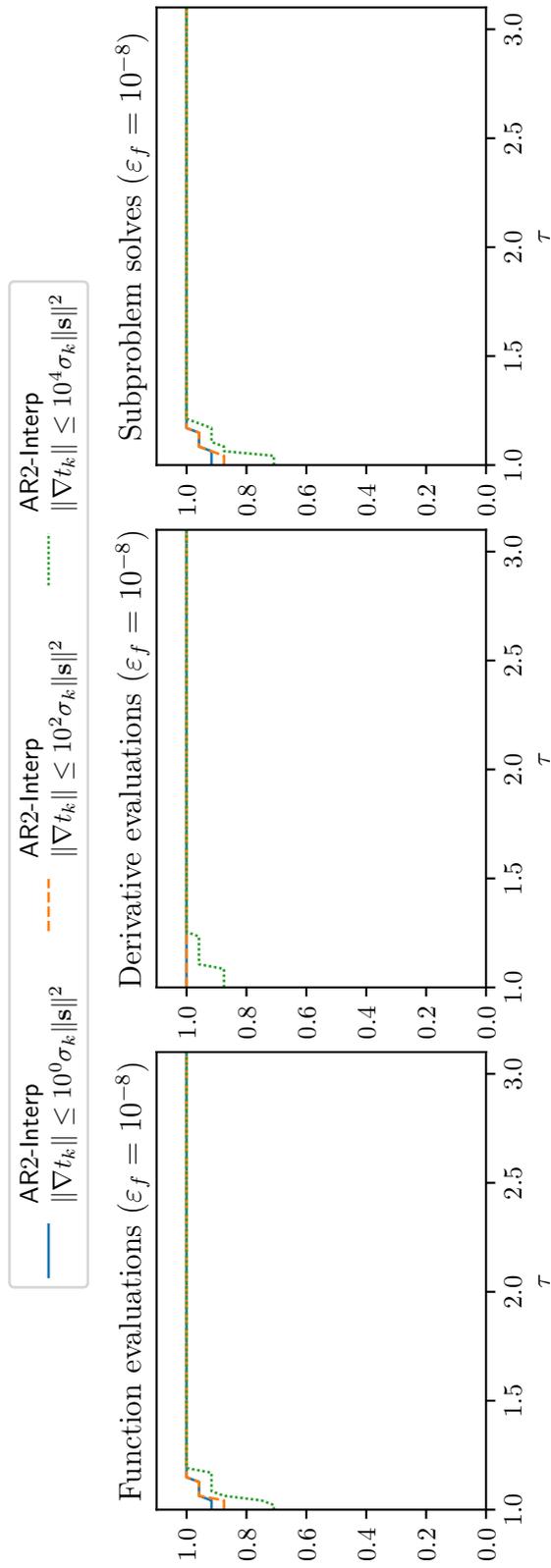}
	\caption{The performance profile plot shows the differences between $\theta \in \{10^{0}, 10^{2}, 10^{4}\}$ in the subproblem termination condition \cref{eqn:subproblem_general_norm_tc} for \textsf{AR$2$-Interp}.}
	\label{fig:theta-gn-performance-profile-p-2-interpolation}
\end{sidewaysfigure}

\begin{sidewaysfigure}
	\centering
	\subimport{plots/theta_GN}{performance_profile-benchmark-2-tau-f-one-line.pgf}
	\caption{The performance profile benchmark plot shows the differences between \textsf{AR$2$-Simple}, \textsf{AR$2$-Interp}, and \textsf{AR$2$-BGMS} using three termination conditions: \cref{eqn:subproblem_absolute_tc} with $\eps_{\textrm{sub}} = 10^{-9}$, \cref{eqn:subproblem_relative_tc} with $\theta=100$, and \cref{eqn:subproblem_general_norm_tc} with $\theta=1$. This experiment uses the full $35$ MGH test set.}
	\label{fig:theta-gn-performance-profile-p-2}
\end{sidewaysfigure}

\section{\texorpdfstring{AR$2$ versus AR$3$}{AR2 versus AR3}}
\label{sec:AR2vsAR3}
In this section, we focus on the comparison between \textsf{AR$2$-Interp} and \textsf{AR$3$-Interp\textsuperscript{+}} variants.
We provide performance profile plots for different test sets, namely:
\begin{itemize}
	\item For $18$ odd MGH test problems and $6$ additional problems in \cref{fig:2vs3-performance-profile-training},
	\item For the full $35$ problems in the MGH test set in \cref{fig:2vs3-performance-profile-benchmark},
	\item For the six additional problems in \cref{fig:2vs3-performance-profile-extra},
	\item For the four regularized third-order polynomial problems \cref{fig:2vs3-performance-profile-regularized-cubic}.
\end{itemize}
As shown in \cref{fig:2vs3-performance-profile-extra,fig:2vs3-performance-profile-regularized-cubic}, \textsf{AR$3$-Interp\textsuperscript{+}} with \cref{eqn:subproblem_absolute_tc} performed exceptionally well across all measures on the four regularized third-order polynomial problems, particularly in function evaluations. However, on a larger test set as in \cref{fig:2vs3-performance-profile-training,fig:2vs3-performance-profile-benchmark}, these advantages are less pronounced compared to \textsf{AR$3$-Interp\textsuperscript{+}} with $\theta=100$. Nonetheless, it is evident by \cref{fig:2vs3-performance-profile-training,fig:2vs3-performance-profile-benchmark,fig:2vs3-performance-profile-extra,fig:2vs3-performance-profile-regularized-cubic} that \textsf{AR$2$-Interp} is less efficient compared to \textsf{AR$3$-Interp\textsuperscript{+}}, which aligns with the observations in \cite{birgin2020use} and our example in \cref{exp:harpin_slalom}.

\begin{sidewaysfigure}
	\centering
	\subimport{plots/2vs3}{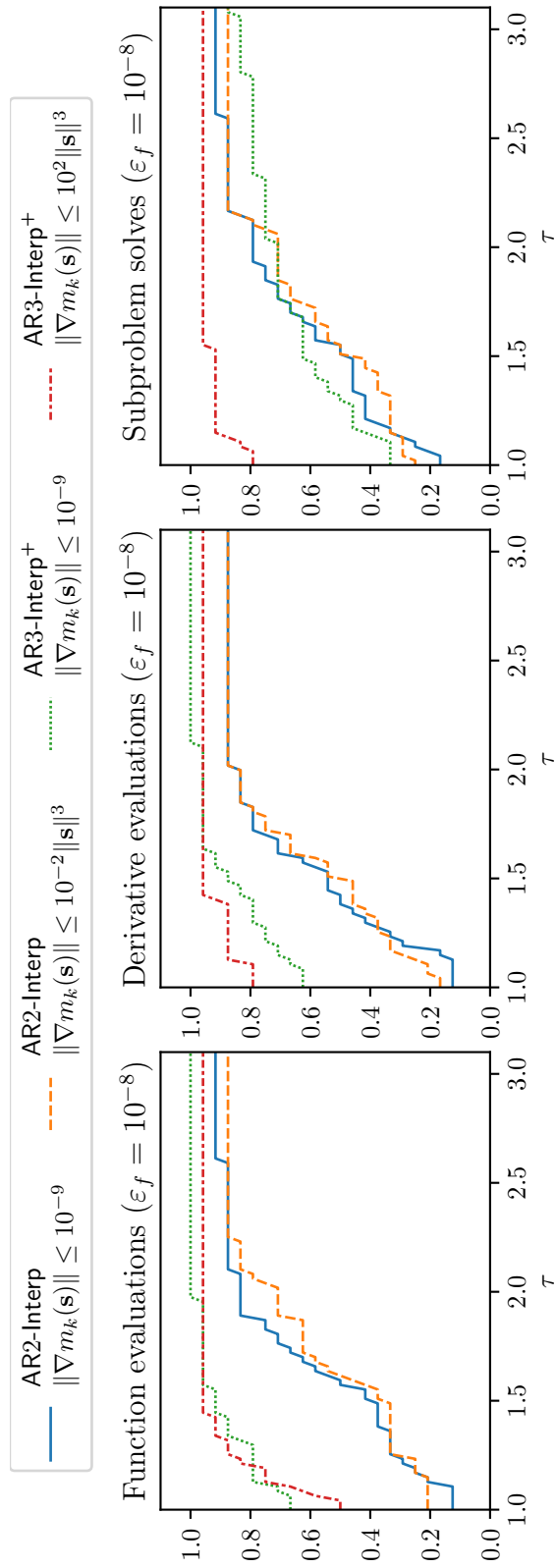}
	\caption{The performance profile plot compares the \textsf{AR$2$-Interp} and \textsf{AR$3$-Interp\textsuperscript{+}} variants using $18$ odd MGH test problems and $6$ additional test problems.}
	\label{fig:2vs3-performance-profile-training}
\end{sidewaysfigure}

\begin{sidewaysfigure}
	\centering
	\subimport{plots/2vs3}{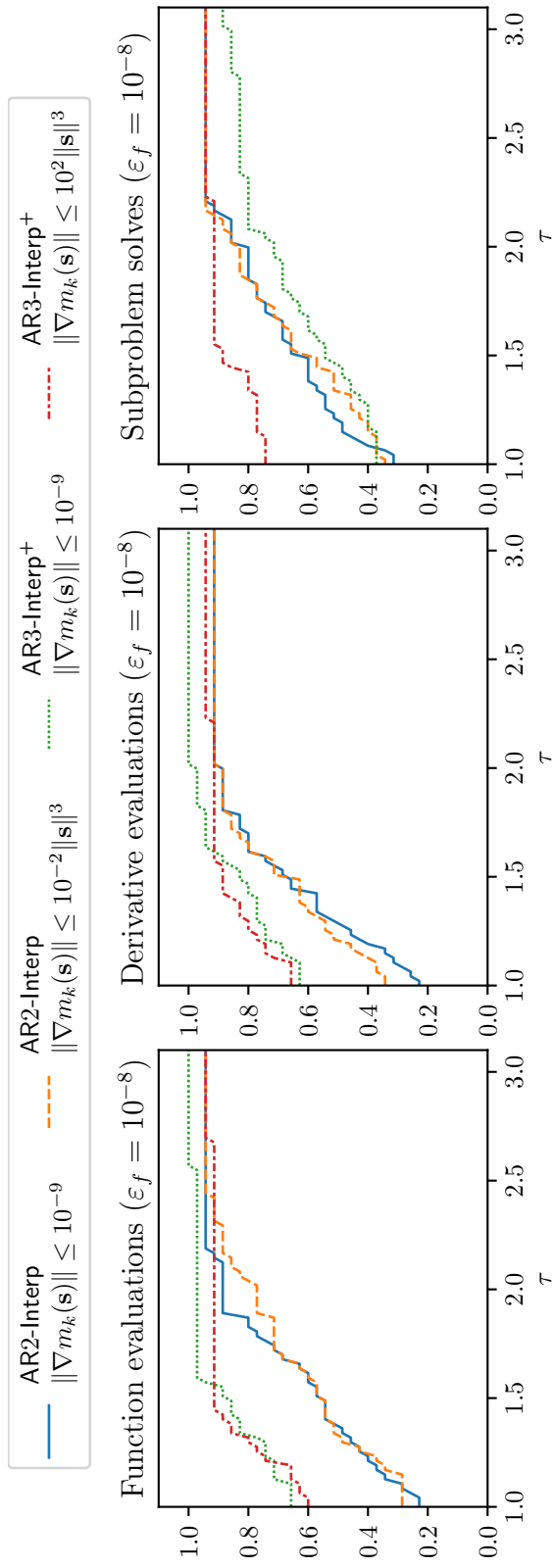}
	\caption{The performance profile plot compares the \textsf{AR$2$-Interp} and \textsf{AR$3$-Interp\textsuperscript{+}} variants using full $35$ MGH test set.}
	\label{fig:2vs3-performance-profile-benchmark}
\end{sidewaysfigure}

\begin{sidewaysfigure}
	\centering
	\subimport{plots/2vs3}{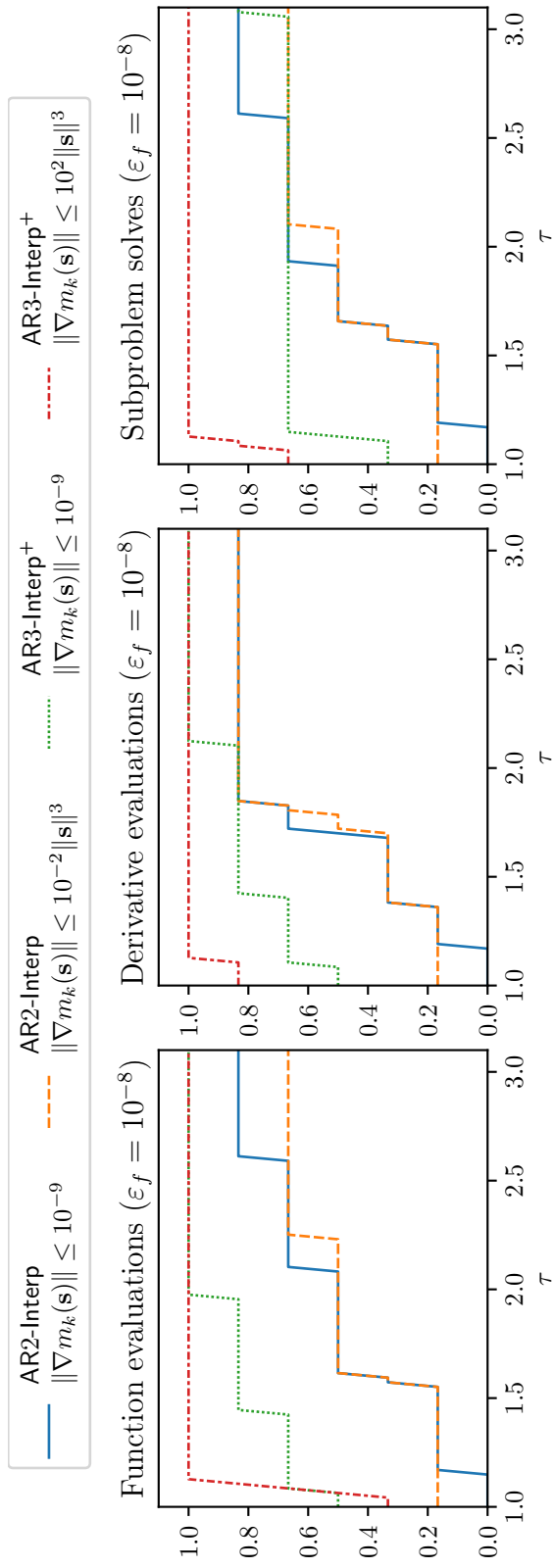}
	\caption{The performance profile plot compares the \textsf{AR$2$-Interp} and \textsf{AR$3$-Interp\textsuperscript{+}} variants using $6$ additional test problems.}
	\label{fig:2vs3-performance-profile-extra}
\end{sidewaysfigure}

\begin{sidewaysfigure}
	\centering
	\subimport{plots/2vs3}{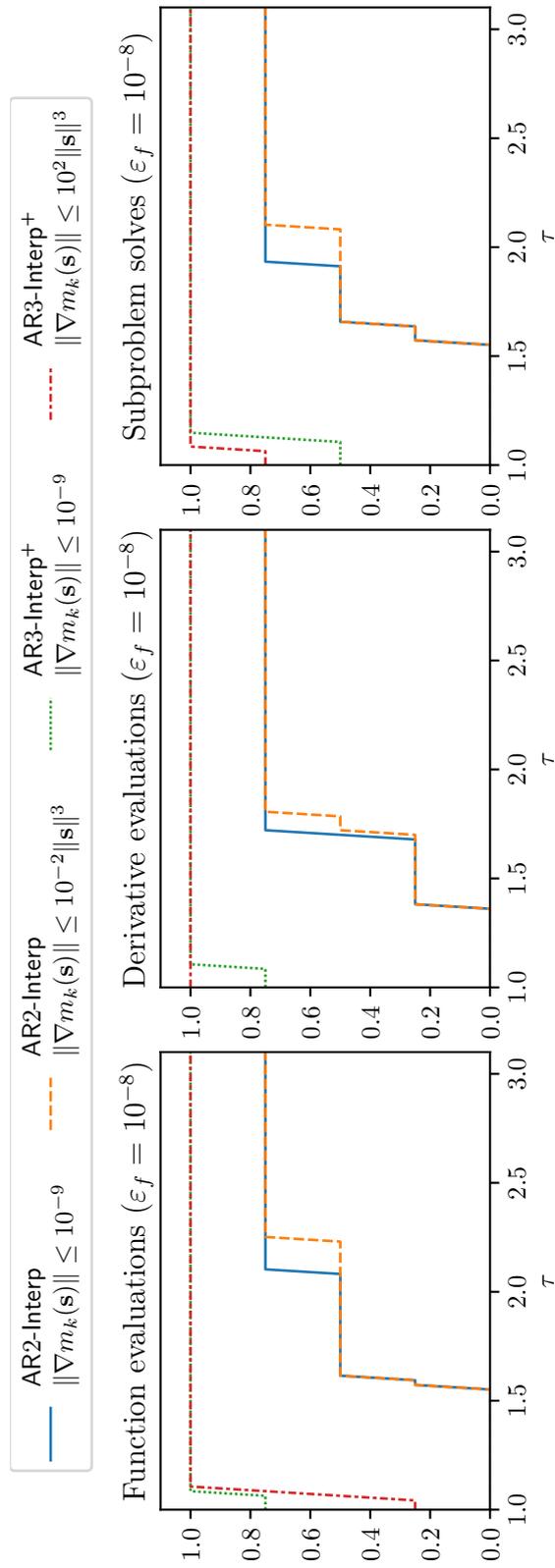}
	\caption{The performance profile plot compares the \textsf{AR$2$-Interp} and \textsf{AR$3$-Interp\textsuperscript{+}} variants using $4$ regularized third-order polynomials test problems.
    }
	\label{fig:2vs3-performance-profile-regularized-cubic}
\end{sidewaysfigure}

\end{document}